\def\id{\mathop {\fam0 id}\nolimits}
\def\diag{\mathop {\fam0 diag}\nolimits}
\def\Cur{\mathop {\fam0 Cur}\nolimits}
\def\ConfAs{\mathop {\fam0 ConfAs}\nolimits}
\def\Cend{\mathop {\fam0 Cend}\nolimits}
\def\oo#1{\mathrel {{}_{(#1)}}}
\def\bo#1{\mathrel {{}_{[#1]}}}
\newtheorem{lemma}{Lemma}
\newtheorem{proposition}{Proposition}
\newtheorem{theorem}{Theorem}
\newtheorem{corollary}{Corollary}
\theoremstyle{definition}
\newtheorem{remark}{Remark}
\newtheorem{example}{Example}
\begin{document}

\title[On the Hochschild cohomologies of conformal algebras]{On the Hochschild cohomologies 
of associative conformal algebras with a finite faithful representation}

\author{P.S. Kolesnikov\and
       R.A. Kozlov}

\address{Sobolev Institute of Mathematics}
\email{pavelsk77@gmail.com}

\address{Novosibirsk State University}
\email{KozlovRA.NSU@yandex.ru}

\maketitle

\begin{abstract}
Associative conformal algebras of conformal endomorphisms are of essential importance
for the study of finite representations of conformal Lie algebras (Lie vertex algebras).
We describe all semisimple algebras of conformal endomorphisms
which have the trivial second Hochschild cohomology group 
with coefficients in every conformal bimodule. As a consequence, we state a complete
solution of the radical splitting problem in the class of associative conformal algebras
with a finite faithful representation.
\end{abstract}


 
\section{Introduction}

An algebraic formalization of the properties of 
the operator product expansion (OPE) in 
2-dimensional conformal field theory \cite{BPZ}
gave rise to a new class of algebraic systems,
vertex operator algebras \cite{Bor,FLM}.
The singular part of the OPE describes the commutator 
of two fields, and the corresponding
algebraic structures are called conformal (Lie) algebras \cite{Kac1996}
(or vertex Lie algebras \cite{FBZ2001}).

Namely, suppose $V$ is a vertex operator (super)algebra with a translation operator $\partial $
and a state-field correspondence $Y$.
Then, due to the locality axiom, the OPE of two fields
 $Y(a,z)$ and $Y(b,z)$, $a,b\in V$,
has a finite singular part:
$$
Y(a,w)Y(b,z) = \sum\limits_{n= 0}^{N(a,b)-1} Y(c_n, z) \frac{1}{(w-z)^{n+1}} 
+
(\mbox{regular part}).
$$
The coefficients of the singular part 
are completely determined by the (super)com\-mu\-tator 
of the fields:
$$
[Y(a,w),Y(b,z)] = \sum\limits_{n=0}^{N(a,b)-1}
Y(c_n,z)\frac{1}{n!} \frac{\partial^n\delta(w-z)}{\partial z^n},
$$
where 
$\delta(w-z) 
=\sum\limits_{s\in \mathbb Z} w^sz^{-s-1}$
is the formal delta-function.
The correspondence 
$$
(a,b)\mapsto c_n,\quad n\ge 0,
$$
defines an infinite series of bilinear operations ($n$-products) on $V$.
Together with the translation operator 
$\partial $, these operations 
turn $V$ into what is called a conformal Lie (super)algebra.

The most natural analogues of 
finite-dimensional algebras in the 
class of conformal algebras 
are finite ones, i.e., 
those finitely generated as modules over 
$H=\mathbb C [\partial ]$.

An algebraic study of this class of conformal algebras is an interesting mathematical problem
with numerous ties to other areas.
Structure theory of finite Lie conformal algebras was developed in \cite{DK1998}, 
simple and semisimple finite Lie conformal 
superalgebras were described in \cite{CK1997CK_6,FK2002,FK2004}.
Representations and cohomologies of conformal algebras were studied in \cite{BKV2001,BKL2011,BKL2013,CK1997,CKW1996,CKW2002,MZ2014}.

The study of universal structures for conformal algebras was initiated in
\cite{Roit1999}.  
The classical theory of finite-dimensional Lie algebras
often needs universal constructions like free algebras and universal enveloping algebras. 
This was a motivation for the development of combinatorial issues in the theory of
conformal algebras \cite{BFK2000,BFK2004}.  
 
One of the most intriguing questions in this field is related with the classical Ado Theorem. 
The latter states that every finite-dimensional Lie algebra has a faithful
finite-dimensional  representation. 
The Ado Theorem is a crucial point for understanding why every Lie algebra integrates 
globally into a Lie group. A formal approach to Lie theory (see \cite{SPM_Bull}) 
allows us to hope
that the ``fundamental triangle'' of Lie theory can be established for conformal algebras. 
To that end, an analogue of the Ado Theorem for conformal algebras will be required.
It was shown in \cite{Kol2011,Kol2016} that 
a finite (torsion-free) Lie conformal algebra
 has a faithful finite representation provided that its semisimple part splits as a 
 subalgebra, i.e., the analogue of the Levi Theorem holds. However, it is known that
the Levi Theorem does not hold for finite Lie conformal algebras in general 
(see, e.g., 
\cite{BKV2001,CKW2002}). 
In order to get further advance in the study of existence of faithful finite representations
we need to explore the structure of associative conformal envelopes of finite Lie algebras. 
These algebraic structures belong to the class of associative conformal algebras \cite{Kac1996} 
with a finite faithful representation (FFR, for short).

The structure theory of finite associative conformal algebras is very much similar
to ordinary associative algebras: simple objects 
are isomorphic to current conformal algebras $\Cur M_n(\mathbb C)$ over matrix algebras 
\cite{DK1998}, semisimple algebras are direct sums of simple ones, the maximal nilpotent
ideal (radical) always exists, and 
the semisimple part splits as a subalgebra \cite{Zelm2000}. 
Namely, 
for every finite associative conformal algebra $C$ there exists 
a maximal nilpotent ideal $R$ such that $E/R$ is isomorphic 
to the current conformal algebra $\Cur A$ 
over a semisimple finite-dimensional associative algebra~$A$. 
Moreover, the following analogue of the Wedderburn Principal Theorem holds: 
$C$ is isomorphic to the semi-direct product of $\Cur A$ and $R$, $C\simeq \Cur A\ltimes R$. 
The latter result also follows from the study of Hochschild cohomologies
for finite associative conformal algebras \cite{Dolg2007}, where it was shown that 
$H^2(\Cur A, M)=0$ for every semisimple algebra $A$, $\dim A<\infty$, and for every 
conformal bimodule $M$ over $\Cur A$.

Associative conformal algebras with a FFR form 
a more general class than (torsion-free) finite associative conformal algebras
\cite{Kol2011}. The conjecture on the structure of simple algebras in this class was 
posed in \cite{BKL2003} and proved in \cite{Kol2006_FFR}.
Semisimple associative conformal algebras with a FFR turn to be direct sums of simple ones, 
but was shown in \cite{Kol2008} that the analogue of the Wedderburn Principal Theorem 
does not hold in general. 

Since the splitting of a semisimple part plays crucial role in the study 
of Ado-type problems for finite Lie conformal algebras, it is reasonable to 
investigate the similar problem for associative conformal algebras with a FFR.
A natural tool for such investigation is the computation of Hochschild cohomologies
for conformal algebras. 
The latter were proposed in \cite{BKV2001}, but we prefer using the pseudo-tensor category approach of \cite{BDK2001}.
In this paper, we explicitly describe all those semisimple associative conformal algebras 
with a FFR that have trivial second Hochschild cohomology group relative to every 
conformal bimodule. 
The main technical statement is to show that conformal algebras of type  
$\Cend_{n,Q}$, where $Q=\diag\{1,\dots, 1,x\}$, always have trivial second 
cohomology group.
For $n=1$, it was done in \cite{Kozlov2017}. In this paper,
we use a different method of proof that does not work for $n=1$.  

Throughout the paper, 
$\Bbbk $ is an algebraically closed field of characteristic zero,
$\mathbb Z_+$ is the set of nonnegative integers.

\section{Preliminaries}

A conformal algebra \cite{Kac1996} is a linear space $C$ equipped with 
a linear map $\partial : C\to C$ and with a countable family 
of bilinear operations $(\cdot \oo{n} \cdot): C\otimes C\to C$, $n\in \mathbb Z_+$, 
satisfying the following axioms:
\begin{itemize}
 \item[(C1)] for every $a,b\in C$ there exists $N\in \mathbb Z_+$ such that $(a\oo{n}b)=0$ for all $n\ge N$;
 \item[(C2)] $(\partial a\oo{n}b)=-n(a\oo{n-1} b)$;
 \item[(C3)] $(a\oo{n} \partial b) = \partial (a\oo{n} b) + n(a\oo{n-1} b)$. 
\end{itemize}

Every conformal algebra $C$ is a left module over the polynomial algebra $H=\Bbbk[\partial ]$.
The structure of a conformal algebra on an $H$-module $C$ 
may be expressed by means of a single polynomial-valued map called $\lambda $-product:
\begin{equation}\label{eq:Lambda-prod}
 (\cdot \oo{\lambda }\cdot ): C\otimes C \to C[\lambda ], \quad 
 (a\oo{\lambda } b) = \sum\limits_{n\in \mathbb Z_+} \lambda^{(n)} (a\oo{n} b),
\end{equation}
where $\lambda $ is a formal variable, $\lambda^{(n)}=\lambda^n/n!$. The axioms (C2) and (C3) turn into 
the linearity property
\begin{equation}\label{eq:3/2-linearity}
 (\partial a\oo{\lambda } b) = -\lambda (a\oo{\lambda } b), \quad (a\oo{\lambda } \partial b) = (\partial +\lambda) (a\oo{\lambda } b).
\end{equation}

For every conformal algebra $C$ there exists a uniquely defined {\em coefficient algebra} $A(C)$ 
such that $C$ is isomorphic to a conformal algebra of formal distributions over $A(C)$ \cite{KacForDist1999}.
Conformal algebra $C$ is called associative (commutative, Lie, Jordan, etc.)
if so is $A(C)$ \cite{Roit1999}.
Every identity on $A(C)$ may be expressed as a family of identities on $C$. 
For example, $A(C)$ is associative 
if and only if 
\begin{equation}\label{eq:Conf_assoc(n)}
 (a\oo{n} (b\oo{m} c)) = \sum\limits_{s\ge 0} \binom{n}{s} ((a\oo{n-s} b)\oo{m+s} c)
\end{equation}
for all $a,b,c\in C$, $n,m\in \mathbb Z_+$. This family of identities may be expressed 
in terms of $\lambda $-product \eqref{eq:Lambda-prod} as
\begin{equation}\label{eq:Conf_assoc(lambda)}
 (a\oo{\lambda } (b\oo{\mu } c)) = ((a\oo{\lambda } b)\oo{\lambda +\mu } c), \quad a,b,c\in C,
\end{equation}
where $\lambda $ and $\mu $ are independent commuting variables \cite{KacForDist1999}.

A more conceptual approach to the theory of conformal algebras, their identities, representations, cohomologies, etc., 
is provided by the notion of a pseudo-algebra \cite{BDK2001}. 
Indeed, in ordinary algebra all basic definitions may be stated in terms of linear spaces, polylinear maps, and their compositions. 
For pseudo-algebras, the base field is replaced with a Hopf algebra $H$, 
the class of linear spaces is replaced with 
the class $\mathcal M(H)$ of left $H$-modules, and the role of $n$-linear maps is played by 
$H^{\otimes n}$-linear maps of the form
\[
\varphi: V_1\otimes \dots \otimes V_n \to H^{\otimes n}\otimes _H V, \quad V_i,V\in \mathcal M(H),
\]
where $H^{\otimes n}$ is considered as the outer product of regular right $H$-modules. 
Compositions of such maps are naturally defined by means of the expansion of $\varphi $ 
to an $H^{\otimes (m_1+\dots + m_n)}$-linear map
\begin{equation}\label{eq:pseudo-ext}
 (H^{\otimes m_1}\otimes_H V_1)\otimes \dots \otimes  (H^{\otimes m_n}\otimes_H V_n)\to H^{\otimes (m_1+\dots +m_n)}\otimes _H V, 
\end{equation}
$m_1,\dots , m_n\in \mathbb Z_+$, given by the following rule:
\[
 \varphi (1^{\otimes m_1}\otimes _H v_1,\dots, 1^{\otimes m_n}\otimes _H v_n) = 
 ((\Delta^{m_1}\otimes \dots \otimes \Delta^{m_n})\otimes _H \id_V)\varphi(v_1,\dots, v_n),
\]
where $\Delta^m : H\to H^{\otimes m}$ is the iterated coproduct on~$H$.

A pseudo-algebra is a left $H$-module $C$ equipped 
with a $H^{\otimes 2}$-linear map $*:C\otimes C\to H^{\otimes 2}\otimes_H C$ called pseudo-product, 
$*: a\otimes b \mapsto a*b$ (similar to the definition of an ordinary algebra as a linear space 
equipped with a bilinear product map).

Conformal algebras are exactly pseudo-algebras over the polynomial Hopf algebra $H=\Bbbk [\partial ]$ with coproduct 
$\Delta f(\partial) =f( \partial\otimes 1 + 1\otimes \partial )$, 
counit $\varepsilon(f(\partial))=f(0)$, and antipode $S(f(\partial) )=f(-\partial )$.
The relation between pseudo-product and conformal $\lambda $-product is given by 
\[
 a*b = (a\oo{\lambda }b)|_{\lambda =-\partial \otimes 1}.
\]
A conformal algebra $C$ satisfies \eqref{eq:Conf_assoc(n)} if and only if 
\[
 a*(b*c)=(a*b)*c, \quad a,b,c\in C,
\]
where $a*(b*c)\in H^{\otimes 3}\otimes _H C$ is the result of the composition $*(\id_C, *)$ on $a\otimes b\otimes c$
(the right-hand side is defined similarly).

\begin{remark}
The class $\mathcal M(H)$ together with $H$-polylinear maps and their compositions described above 
forms a non-symmetric pseudo-tensor category \cite{BD2004}. 
This language is enough to describe associative algebra features; to include 
commutativity into consideration (or any other relation involving permutations of variables)  
one needs a symmetric pseudo-tensor category structure on $\mathcal M(H)$ as described 
in \cite{BDK2001}. For example, the anti-commutativity identity in the language of pseudo-product 
is $a*b = -(\sigma_{12}\otimes_H \id_C)(b*a)$, where $\sigma_{12}$ acts on $H\otimes H$ as the permutation 
of tensor factors.
\end{remark}

Suppose $C$ is an associative conformal algebra considered as a pseudo-algebra over $H=\Bbbk[\partial ]$. 
A conformal bimodule over $C$ is a left $H$-module $M\in \mathcal M(H)$ equipped with $H^{\otimes 2}$-linear 
maps $l: C\otimes M \to H^{\otimes 2}\otimes _H M$ and $r: M\otimes C\to H^{\otimes 2}\otimes _H M$
satisfying three associativity identities:
\[
 l(*, \id_M)=l(\id_C, l),\quad l(\id_C,r)=r(l,\id_C) ,\quad  r(\id_M, *)=r(r,\id_C).
\]
In terms of ordinary operations, it means that we have two families of $M$-valued $n$-products 
defined on $C\otimes M$ and $M\otimes C$ satisfying the analogues 
of (C1)--(C3) and \eqref{eq:Conf_assoc(n)}.
We will also describe these infinite families by their generating functions denoted $(\cdot\oo{\lambda }\cdot)$.
Then
\[
 l(a,u) = (a\oo{\lambda} u)|_{\lambda = -\partial \otimes 1},\quad r(u,a) = (u\oo{\lambda} a)|_{\lambda = -\partial \otimes 1},
\]
and the analogues of 
\eqref{eq:3/2-linearity} and \eqref{eq:Conf_assoc(lambda)} hold.

Let us describe the Hochschild cohomology complex $\mathcal C^*(C,M)$ for an associative 
conformal algebra $C$ and 
a conformal bimodule $M$ over $C$ \cite{Dolg2007}. 
The space of $n$-cochains $\mathcal C^{n}(C,M)$ consists of all $H^{\otimes n}$-linear maps 
\[
 \varphi: C^{\otimes n}\to H^{\otimes n}\otimes _H M. 
\]
The conformal Hochschild differential 
$d_n: \mathcal C^{n}(C,M)\to \mathcal C^{n+1}(C,M)$ 
is defined similarly to the ordinary one, assuming the expansion \eqref{eq:pseudo-ext}:
\begin{multline}\label{eq:differential_pseudo}
 (d_n\varphi)(a_1,\dots, a_{n+1}) = a_1 * \varphi(a_2, \dots , a_{n+1}) \\
 + \sum\limits_{i = 1}^{n} (-1)^i\varphi(a_1,\dots , a_i * a_{i+1},\dots , a_{n+1}) 
  +  (-1)^{n+1}\varphi(a_1 ,\dots , a_n) * a_{n+1}.
\end{multline}
Denote by $\mathcal Z^n(C,M)$ and $\mathcal B^n(C,M)$ the subspaces of $n$-cocycles and $n$-co\-boun\-da\-ries, respectively. 
As for ordinary algebras, the quotient space $\mathcal H^n(C,M)=\mathcal Z^{n}(C,M)/\mathcal B^n(C,M)$ is called the $n$th Hochschild 
cohomology group of $C$ with coefficients in~$M$.

The complex $\mathcal C^*(C,M)$ may be described by means of $\lambda $-products.
For every $\varphi \in \mathcal C^n(C,M)$ and $a_1,\dots, a_n\in C$
we may write
\[
 \varphi(a_1,\dots , a_n)
  = \sum\limits_{s_1,\dots, s_{n-1}\in \mathbb Z_+} ( \partial^{(s_1)}\otimes \dots \otimes \partial^{(s_{n-1})}\otimes 1)
  \otimes _H u_{s_1,\dots , s_{n-1}},
\]
where $u_{s_1,\dots, s_{n-1}}\in M$ are uniquely defined.
Then one may consider a map 
\[
 \varphi_{\lambda_1,\dots, \lambda_{n-1}}: C^{\otimes n}\to M[\lambda_1,\dots, \lambda_{n-1}]
\]
defined by
\[
 \varphi_{\lambda_1,\dots, \lambda_{n-1}}(a_1,\dots, a_n) 
 = \sum\limits_{s_1,\dots, s_{n-1}\in \mathbb Z_+} (-1)^{s_1+\dots+s_{n-1}}  
 \lambda_1^{(s_1)} \dots \lambda_{n-1}^{(s_{n-1})} u_{s_1,\dots , s_{n-1}}.
\]
This map has the following sesquilinearity properties:
\[
\begin{gathered}
 \varphi_{\lambda_1,\dots, \lambda_{n-1}}(a_1,\dots,\partial a_i, \dots , a_n) = -\lambda_i \varphi_{\lambda_1,\dots, \lambda_{n-1}}(a_1,\dots, a_n),\quad i=1,\dots, n-1, \\
 \varphi_{\lambda_1,\dots, \lambda_{n-1}}(a_1,\dots, \partial a_n) = (\partial +\lambda_1 + \dots + \lambda_{n-1})\varphi_{\lambda_1,\dots, \lambda_{n-1}}(a_1,\dots, a_n).
\end{gathered}
\]
The differential \eqref{eq:differential_pseudo} turns into 
\begin{multline}\nonumber
(d_n\varphi)_{\lambda_1,\dots, \lambda_{n} }(a_1,\dots, a_{n+1}) = a_1 \oo{\lambda_1} 
\varphi_{\lambda_2,\dots, \lambda_n } (a_2, \dots , a_{n+1}) \\
 + \sum\limits_{i = 1}^{n} (-1)^i\varphi_{\lambda_1,\dots, \lambda_i+\lambda_{i+1},\dots , \lambda_n}
  (a_1,\dots , a_i \oo{\lambda_i} a_{i+1},\dots , a_{n+1}) \\
  +  (-1)^{n+1}\varphi_{\lambda_1,\dots, \lambda_{n-1}}(a_1 ,\dots , a_n) \oo{\lambda_1+\dots +\lambda_n} a_{n+1}. 
\end{multline}
For example, the space of 2-cocycles $\mathcal Z^2(C,M)=\mathrm{Ker}\,d_2\subset \mathcal C^2(C,M)$ consists of all sesquilinear maps 
$\varphi_\lambda : C\otimes C\to M[\lambda ]$ such that 
\begin{equation}\label{eq:Cocycle-lambda}
 a_1\oo{\lambda } \varphi_\mu (a_2,a_3) -\varphi_{\lambda +\mu}(a_1\oo\lambda a_2, a_3) 
 + \varphi_\lambda(a_1,a_2\oo\mu a_3) - \varphi_\lambda (a_1,a_2)\oo{\lambda+\mu} a_3 = 0.
\end{equation}

\begin{remark}
 It is easy to see that  $\mathcal C^*(C,M)$ coincides with the complex
 described in \cite{BKV2001}, where 
 $\mathcal C^n(C,M)$ consists of adjacent classes of sesquilinear maps 
 $\gamma_{\lambda_1,\dots, \lambda_n}: C^{\otimes n}\to M[\lambda_1,\dots, \lambda_n]$
 modulo the multiples of $(\partial+\lambda_1+\dots + \lambda_n)$.
 The correspondence is given by 
 \[
  \gamma_{\lambda_1,\dots, \lambda_n} \leftrightarrow \varphi_{\lambda_1,\dots, \lambda_{n-1}}
    =\gamma_{\lambda_1,\dots, \lambda_{n-1}, -\partial -\lambda_1-\dots - \lambda_{n-1}}.
 \]
\end{remark}

Recall that a null extension of an associative conformal algebra $C$ by means of a $C$-bimodule $M$ 
is an associative conformal algebra $E$
in a short exact sequence 
\[
 0\to M \to E \to C\to 0,
\]
where $(M\oo{\lambda } M)=0$ in $E$.
Two null extensions $E_1$ and $E_2$ are equivalent if there exists an isomorphism $E_1\to E_2$ such that the diagram 
\[
 \begin{CD}
0 @>>> M @>>> E_1 @>>> C @>>> 0\\
@. @V\id_M VV @VVV @VV\id_C V @. \\
0 @>>> M @>>> E_2 @>>> C @>>> 0
 \end{CD}
\]
is commutative.

\begin{theorem}[\cite{BKV2001,Dolg2007}]\label{thm:ExtCocycle}
Equivalence classes of null extensions of $C$ by means of $M$ are in one-to-one correspondence with $\mathcal H^2(C,M)$. 
\end{theorem}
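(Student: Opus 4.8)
The plan is to establish the bijection in the standard way, mimicking the classical proof of the Hochschild interpretation of the second cohomology group, but carefully tracking the sesquilinearity constraints imposed by the $H$-module structure. First I would describe the map from null extensions to cocycles. Given a null extension $0\to M\to E\to C\to 0$, I would choose an $H$-module splitting $\sigma\colon C\to E$ of the surjection $E\to C$ (this exists because $H=\Bbbk[\partial]$ is a PID and $C$ is a submodule-complement situation — more precisely, one works in the category $\mathcal M(H)$ where such a set-theoretic $H$-linear section can always be chosen after identifying $C$ with a complement, or one simply picks an $H$-basis of $C$ and lifts it). Using $\sigma$, one sets $\varphi_\lambda(a,b) = \sigma(a)\oo{\lambda}\sigma(b) - \sigma(a\oo{\lambda}b) \in M[\lambda]$; the fact that $(M\oo{\lambda}M)=0$ together with sesquilinearity of the $\lambda$-product on $E$ shows $\varphi_\lambda$ is a well-defined sesquilinear $2$-cochain, and the associativity identity \eqref{eq:Conf_assoc(lambda)} in $E$ applied to $\sigma(a_1),\sigma(a_2),\sigma(a_3)$, after expanding and cancelling terms, yields exactly the cocycle condition \eqref{eq:Cocycle-lambda}. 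Changing the section $\sigma$ to $\sigma'=\sigma+\tau$ for an $H$-linear $\tau\colon C\to M$ changes $\varphi_\lambda$ by $d_1\tau$, so the class in $\mathcal H^2(C,M)$ is independent of choices and equivalent extensions give the same class.

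Next I would describe the inverse construction. Given a $2$-cocycle $\varphi_\lambda$, one builds $E=C\oplus M$ as an $H$-module and defines a $\lambda$-product by $(a+u)\oo{\lambda}(b+v) = (a\oo{\lambda}b) + \bigl(a\oo{\lambda}v + u\oo{\lambda}b + \varphi_\lambda(a,b)\bigr)$, where the $\oo{\lambda}$ on $C$-with-$M$ terms is the bimodule action. One checks (C1)–(C3) — equivalently the sesquilinearity relations \eqref{eq:3/2-linearity} — hold because each summand is sesquilinear; and the associativity identity \eqref{eq:Conf_assoc(lambda)} for $E$ reduces, in the $M$-component, precisely to the cocycle relation \eqref{eq:Cocycle-lambda} (the $C$-component is associativity in $C$, the mixed $C$–$M$ components are the bimodule axioms). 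Thus $E$ is an associative conformal algebra, $M$ is an ideal with $(M\oo{\lambda}M)=0$, and $0\to M\to E\to C\to 0$ is a null extension. Cohomologous cocycles $\varphi_\lambda$ and $\varphi_\lambda + d_1\tau$ yield equivalent extensions via the $H$-linear isomorphism $a+u\mapsto a+u+\tau(a)$.

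Finally I would check the two constructions are mutually inverse: starting from an extension $E$, choosing a section, forming $\varphi_\lambda$, and rebuilding $C\oplus M$ gives an algebra isomorphic to $E$ over $\id_C$ and $\id_M$ (the isomorphism is $a+u\mapsto \sigma(a)+u$); and starting from a cocycle, the canonical section of the built extension recovers the same cocycle on the nose. Both verifications are routine once the definitions are unwound.

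The main obstacle is not conceptual but bookkeeping: one must verify that the associativity identity in $E$ translates \emph{exactly} into the cocycle identity \eqref{eq:Cocycle-lambda} with the correct signs and the correct specializations of the independent variables $\lambda,\mu$, and that every map written down genuinely respects the $H$-linearity built into the pseudo-tensor formalism (in particular that $\varphi_\lambda$ lands in $M[\lambda]$ rather than a larger space, which uses $(M\oo{\lambda}M)=0$ crucially). Since this theorem is quoted from \cite{BKV2001,Dolg2007}, I expect the paper either to cite it without proof or to give only the brief sketch above; I would present the sketch and refer the reader to those sources for the sign-level details.
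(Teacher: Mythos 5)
Your proposal is correct and matches what the paper does: Theorem~\ref{thm:ExtCocycle} is quoted from \cite{BKV2001,Dolg2007} without proof, and the explicit cocycle-to-extension formula the paper records immediately after the statement (the $\lambda$-product $(a\oo{\hat\lambda}b)=(a\oo{\lambda}b)+\varphi_\lambda(a,b)$ on $E=C\dot+M$) is exactly your inverse construction. The only point worth making explicit is that the $H$-linear section $\sigma$ exists because the conformal algebras considered here are free as $\Bbbk[\partial]$-modules (being subalgebras of $\Cend_n$), so the short exact sequence splits in $\mathcal M(H)$; with that noted, your sketch is the standard and correct argument.
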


A cocycle $\varphi \in \mathcal Z^2(C,M)$ corresponds to an extension $E=C\dot+ M$ with a new $\lambda$-product 
$(\cdot \oo{\hat\lambda }\cdot)$ given by
$ (u \oo{\hat\lambda } v)=0$ for $u,v\in M$,
$(a \oo{\hat\lambda }u) = (a \oo{\lambda }u)$,
$(u \oo{\hat\lambda }a) = (u \oo{\lambda }a)$
for $a\in C$, $u\in M$, and 
\[
 (a \oo{\hat\lambda }b) = (a \oo{\lambda }b) +\varphi_\lambda(a,b)
\]
for $a,b\in C$.

\begin{corollary}
Suppose $C$ is an associative conformal algebra such that 
$\mathcal H^2(C,M)=0$ for every $C$-bimodule $M$. 
Then $C$ splits in every extension with a nilpotent kernel. Namely, if $E$ 
is an associative conformal algebra with a nilpotent ideal $R$ such that 
$E/R\simeq C$ then $E\simeq C\ltimes R$.
\end{corollary}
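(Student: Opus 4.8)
The plan is to run the standard Wedderburn-type reduction: induct on the nilpotency index of $R$ and at each step invoke Theorem~\ref{thm:ExtCocycle} for a null extension. Set $R^{(1)}=R$ and let $R^{(k)}$ be the $H$-submodule of $E$ spanned by all products $(r\oo{n}s)$ with $r\in R^{(k-1)}$, $s\in R$, $n\in\mathbb Z_+$. Using the conformal associativity identity \eqref{eq:Conf_assoc(n)} one checks that every $k$-fold product of elements of $R$ (with any bracketing) lies in $R^{(k)}$, that $R^{(i)}\cdot R^{(j)}\subseteq R^{(i+j)}$, and that each $R^{(k)}$ is an ideal of the \emph{whole} algebra $E$ (not merely of $R$). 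Nilpotency of $R$ means $R^{(N)}=0$ for some $N$, and I will induct on the least such $N$.

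If $N\le 1$ then $R=0$ and there is nothing to prove. Assume the statement for kernels of nilpotency index $<N$ and let $R^{(N)}=0$ with $N\ge 2$. Put $I=R^{(N-1)}$; then $I\cdot I\subseteq R^{(2N-2)}=0$ and $I\cdot R=R\cdot I\subseteq R^{(N)}=0$, so $I$ is a null ideal of $E$ on which $R$ acts trivially from both sides. First I would pass to $\bar E=E/I$: it contains the nilpotent ideal $R/I$ of index $\le N-1$, with $\bar E/(R/I)\simeq E/R\simeq C$, so by the inductive hypothesis there is a subalgebra $\bar C\subseteq\bar E$ with $\bar C\simeq C$ and $\bar E=\bar C\oplus(R/I)$ as $H$-modules. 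Let $D$ be the preimage of $\bar C$ under $\pi\colon E\to\bar E$; it is a subalgebra of $E$ containing $I$, with $D/I\simeq C$ and $D\cap R=I$.

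Next, since $(I\oo{\lambda}I)=0$ in $E$ and $R$ — in particular $I$ itself — acts trivially on $I$, the left and right actions of $D$ on $I$ factor through $D/I\simeq C$; thus $0\to I\to D\to C\to 0$ is a null extension of $C$ by the conformal $C$-bimodule $I$. By Theorem~\ref{thm:ExtCocycle} its class lies in $\mathcal H^2(C,I)$, which vanishes by hypothesis, so the extension is trivial: there is a subalgebra $C'\subseteq D$ with $C'\simeq C$ and $D=C'\oplus I$. It remains to see that $C'$ complements $R$ in $E$. From $\bar E=\bar C\oplus(R/I)$ we get $E=D+R$; since $D=C'+I$ and $I\subseteq R$, this gives $E=C'+R$, while $C'\cap R\subseteq C'\cap(D\cap R)=C'\cap I=0$. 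Hence $E=C'\oplus R$ with $C'$ a subalgebra and $R$ an ideal, i.e.\ $E\simeq C'\ltimes R\simeq C\ltimes R$.

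I expect the only real work to be the bookkeeping of the first paragraph: that the powers $R^{(k)}$ are genuinely ideals of $E$, and that the successive kernel $I$ carries an honest conformal $C$-bimodule structure so that the hypothesis $\mathcal H^2(C,I)=0$ applies. Both are routine consequences of \eqref{eq:Conf_assoc(n)}; all the homological content is already packaged in Theorem~\ref{thm:ExtCocycle}.
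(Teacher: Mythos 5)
Your argument is correct and is exactly the standard reduction the paper has in mind: the corollary is stated without proof as an immediate consequence of Theorem~\ref{thm:ExtCocycle}, and the intended derivation is precisely your induction on the nilpotency index, passing to $E/R^{(N-1)}$ and then splitting the resulting null extension $0\to R^{(N-1)}\to D\to C\to 0$ using $\mathcal H^2(C,R^{(N-1)})=0$. The only points that genuinely need the conformal formalism --- that the powers $R^{(k)}$ are ideals of $E$ and that arbitrary bracketings reduce to left-normed products --- follow from \eqref{eq:Conf_assoc(n)} together with its binomial inverse $((a\oo{n}b)\oo{m}c)=\sum_{t}(-1)^t\binom{n}{t}(a\oo{n-t}(b\oo{m+t}c))$, as you indicate.
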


\begin{example}
Let $A$ be an ordinary algebra, and let $H=\Bbbk[\partial ]$. 
Then the free $H$-module $H\otimes A$ equipped with a $\lambda $-product given by
 \[
  (h(\partial)\otimes a)\oo{\lambda }(g(\partial)\otimes b) = h(-\lambda )g(\partial +\lambda )\otimes ab,
  \quad h,g\in H,\ a,b\in A,
 \]
is called current conformal algebra $\Cur A$.
\end{example}

\begin{theorem}[\cite{Dolg2009}]\label{thm:H2Cur}
Let $A$ be a finite direct sum of matrix algebras over $\Bbbk $, $H=\Bbbk[\partial ]$.
Then $\mathcal H^2(\Cur A, M)=0$ for every conformal bimodule $M$ over $\Cur A$.
\end{theorem}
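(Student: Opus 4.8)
The plan is to trivialize an arbitrary $2$-cocycle $\varphi$ on $\Cur A$ with coefficients in $M$ directly, reducing the construction of its primitive $1$-cochain to the classical fact that a finite-dimensional separable algebra has trivial Hochschild cohomology in all positive degrees. First I would make two reductions. Writing $A=\bigoplus_k A_k$ with $A_k=M_{n_k}(\Bbbk)$ we have $\Cur A=\bigoplus_k\Cur A_k$, the elements $E_k=1\otimes 1_{A_k}$ are orthogonal idempotents for the $0$-product, and using them I would split $M$ into Peirce-type components, show that $\mathcal H^2(\Cur A,M)$ splits accordingly and that the off-diagonal blocks contribute nothing, so that only the diagonal summands $\mathcal H^2(\Cur A_k,M_{kk})$ survive; this reduces everything to $A=M_n(\Bbbk)$. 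Moreover one may assume the bimodule \emph{unital}, $(1\otimes 1_A)\oo 0 u=u$, since the quotient of $M$ by its unital submodule carries the trivial $\Cur A$-action, for which $\mathcal H^2=0$ by a short direct computation, and one invokes the long exact cohomology sequence. On a unital bimodule the $0$-products recover the ordinary $A$-bimodule structure, while the higher products are governed by the single operator $D\colon u\mapsto(1\otimes 1_A)\oo 1 u$, which commutes with the $A$-action and satisfies $[D,\partial]=\id_M$, so that $(1\otimes a)\oo\lambda u=a\cdot\bigl(\sum_{k\ge 0}\lambda^{(k)}D^k u\bigr)$.

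By sesquilinearity a $2$-cochain is just a bilinear map $\phi\colon A\otimes A\to M[\lambda]$, $\phi_\lambda(a,b)=\varphi_\lambda(1\otimes a,1\otimes b)$, and since $(1\otimes a)\oo\lambda(1\otimes b)=1\otimes ab$ is $\lambda$-free, the cocycle identity \eqref{eq:Cocycle-lambda} reads
\[
(1\otimes a)\oo\lambda\phi_\mu(b,c)-\phi_{\lambda+\mu}(ab,c)+\phi_\lambda(a,bc)-\phi_\lambda(a,b)\oo{\lambda+\mu}(1\otimes c)=0,
\]
while a primitive is a linear map $\psi\colon A\to M$ with $\phi_\lambda(a,b)=\psi_{ab}-(1\otimes a)\oo\lambda\psi_b-\psi_a\oo\lambda(1\otimes b)$. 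Expanding in powers of $\lambda$ and $\mu$, the $(0,0)$-component of the identity says that $\phi^{[0]}:=\phi_\lambda|_{\lambda=0}$ is an ordinary Hochschild $2$-cocycle of $A$ with coefficients in $M$; since $A=M_n(\Bbbk)$ is separable, the classical contracting homotopy against a separability element $e=\sum_r u_r\otimes v_r$ (with $\sum_r u_rv_r=1$ and $\sum_r a u_r\otimes v_r=\sum_r u_r\otimes v_r a$) gives a $\psi^{[0]}\colon A\to M$ trivializing $\phi^{[0]}$, so after modifying $\varphi$ by a coboundary we may assume $\phi^{[0]}=0$. The next, $\lambda^{(1)}$-component of the identity then shows the remaining obstruction is carried by a single linear map $A\to M$, which the same separability argument --- now combined with $[D,\partial]=\id_M$ --- lets one absorb into a further correction of $\psi$. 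Finally, an induction on the $\lambda$-degree, fed by the full cocycle identity, shows that once the $\lambda$-components of degree $0$ and $1$ are matched, all higher ones are accounted for automatically; this produces the required $\psi$ and gives $\mathcal H^2(\Cur A,M)=0$.

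I expect the $\lambda$-dependent stage to be the main obstacle. The naive contraction of $\varphi$ against $e$ lands in $M[\lambda]$, not in $M$, and the primitive one needs is genuinely not built from $\phi^{[0]}$ alone: already for $A=\Bbbk$, $M=\Cur\Bbbk$ the $2$-cocycle $\phi_\lambda(1,1)=\lambda(\partial+\lambda)$ has $\phi^{[0]}=0$ and yet requires the primitive $\psi(1)=\tfrac12\partial^2$, whose successive $\partial$-derivatives must reproduce the higher $\lambda$-powers of $\phi$. Making this matching work in general is the heart of the argument, and it is here --- not in the classical input --- that one must use both the precise form of the $\Cur A$-action on $M$ and the vanishing of $\mathcal H^1$ as well as $\mathcal H^2$ of the separable algebra $A$. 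A secondary technicality is the non-unital summand of $M$ dealt with in the first step. A more structural alternative would pass to the coefficient algebra of $\Cur A$ (a loop-type algebra over $A$) and compare the conformal cohomology of $\Cur A$ with ordinary Hochschild cohomology there, again reducing to separability of $A$; setting up that comparison correctly would be the obstacle in that route.
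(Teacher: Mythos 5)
First, note that the paper does not actually prove this statement: Theorem~\ref{thm:H2Cur} is imported from \cite{Dolg2009} and used as a black box, so there is no in-paper argument to compare yours against. Judged on its own terms, your proposal has sound preparatory steps: the reduction to $A=M_n(\Bbbk)$ via the orthogonal conformal idempotents $1\otimes 1_{A_k}$, the reduction to ``unital'' bimodules, the observation that by sesquilinearity a $2$-cochain is determined by a bilinear map $\phi_\lambda\colon A\otimes A\to M[\lambda]$, and the fact that the $(\lambda,\mu)=(0,0)$ component of \eqref{eq:Cocycle-lambda} is a classical Hochschild $2$-cocycle of the separable algebra $A$ with coefficients in $M$ under the $0$-actions, hence a coboundary. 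Your description of the left action through $D=(1\otimes 1_A)\oo{1}(\cdot)$ with $[D,\partial]=\id_M$ and $D$ locally nilpotent is also correct.

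The genuine gap is that everything after ``we may assume $\phi^{[0]}=0$'' is asserted rather than proved --- and you say so yourself. The two claims that carry the whole theorem, namely (i) that the $\lambda^{(1)}$-component of the cocycle identity reduces the obstruction to a single linear map $A\to M$ killable by separability, and (ii) that once the components of $\lambda$-degree $0$ and $1$ are matched all higher components follow automatically, are precisely what has to be established; your own example $\phi_\lambda(1,1)=\lambda(\partial+\lambda)$ shows that the primitive is not produced by the classical contraction against a separability idempotent, and you exhibit no mechanism that produces it in general (one would need to extract from \eqref{eq:Cocycle-lambda}, e.g.\ by differentiating in $\mu$ at $\mu=0$ with $a_2=a_3=1\otimes 1_A$, a recursion determining the higher $\lambda$-coefficients of $\phi$ from the lower ones, and then verify that the corrections made at degrees $0$ and $1$ are consistent with that recursion --- this uses the vanishing of $\mathcal H^1(A,-)$ and the precise interplay of $D$, $\partial$ and the right action). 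A secondary imprecision that will bite in that induction: the right action of $e=1\otimes 1_A$ is \emph{not} unital for the $0$-product ($u\oo{0}e\ne u$ already for $M=\Cur A$); right unitality must be phrased via $\{u\oo{0}e\}=u\oo{-\partial}e$, as the paper does in Proposition~\ref{prop:SumWithUnit}, and the resulting asymmetry between the left and right actions has to be handled explicitly. As written, the proposal is a plausible outline of a correct strategy, not a proof.
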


\begin{corollary}[\cite{Zelm2000}]
 Every finite associative conformal algebra 
 is a semi-direct sum of its nilpotent radical and semisimple part.
\end{corollary}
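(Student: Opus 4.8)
The plan is to obtain this corollary by combining Theorem~\ref{thm:H2Cur} with the splitting criterion stated above (the Corollary following Theorem~\ref{thm:ExtCocycle}, asserting that triviality of $\mathcal H^2$ against all bimodules forces splitting in every nilpotent extension) and the classical structure theory of finite associative conformal algebras.

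First I would invoke the structure theory from \cite{DK1998}: every finite associative conformal algebra $C$ possesses a maximal nilpotent ideal $R$, its radical, and the quotient $C/R$ is isomorphic to the current conformal algebra $\Cur A$ over a finite-dimensional semisimple associative algebra $A$. Since $\Bbbk$ is algebraically closed, the Wedderburn theorem decomposes $A$ as a finite direct sum of matrix algebras $M_{n_1}(\Bbbk)\oplus\dots\oplus M_{n_k}(\Bbbk)$. Consequently Theorem~\ref{thm:H2Cur} applies to $\Cur A$, yielding $\mathcal H^2(\Cur A, M)=0$ for every conformal bimodule $M$ over $\Cur A$.

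Next I would view $C$ as an extension $0\to R\to C\to \Cur A\to 0$ with nilpotent kernel $R$, and apply the splitting criterion with $\Cur A$ in the role of $C$: the vanishing of $\mathcal H^2(\Cur A,-)$ forces $\Cur A$ to split back as a subalgebra, so $C\simeq \Cur A\ltimes R$. This exhibits $C$ as the semi-direct sum of its nilpotent radical $R$ and the semisimple part $\Cur A$, which is what is claimed.

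I do not expect a genuine obstacle here. The one point that needs care is the input from \cite{DK1998}: one must know both that the radical exists and that the semisimple quotient is precisely a current algebra over a direct sum of matrix algebras, so that Theorem~\ref{thm:H2Cur} is literally applicable. Granting that, the cohomological vanishing does all the remaining work, and the argument is just the conformal counterpart of how the Wedderburn Principal Theorem follows from $H^2=0$ in the ordinary associative setting.
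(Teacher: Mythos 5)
Your proposal is correct and is exactly the derivation the paper intends: the corollary is placed immediately after Theorem~\ref{thm:H2Cur} and, as the introduction explains, follows by combining the structure theory of \cite{DK1998} (existence of the nilpotent radical $R$ with $C/R\simeq\Cur A$, $A$ semisimple finite-dimensional, hence a sum of matrix algebras over the algebraically closed field $\Bbbk$) with the vanishing $\mathcal H^2(\Cur A,-)=0$ and the splitting criterion for extensions with nilpotent kernel. No gaps; this matches the paper's (implicit) argument.
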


\begin{example}\label{exmp:Cend}
Consider the space $M_n(\Bbbk[\partial, x])$ equipped with the multiplication action of 
$\partial $ and with a $\lambda $-product 
\[
 A(\partial,x)\oo{\lambda } B(\partial , x) = A(-\lambda , x)B(\partial + \lambda, x+\lambda ).
\]
This is an associative conformal algebra denoted by $\Cend_n$.
\end{example}

The conformal algebra $\Cend_n$ has a finite faithful representation (FFR) on 
$M=\Bbbk[\partial ]\otimes \Bbbk^n\simeq \Bbbk^n[\partial ]$ given by 
\[
 A(\partial, x)\oo{\lambda } v(\partial ) = A(-\lambda , \partial )v(\partial+\lambda ).
\]
Every associative conformal algebra with a FFR is obviously isomorphic to a conformal 
subalgebra of $\Cend_n$ for an appropriate~$n$. 
Therefore, $\Cend_n$ plays the same role in the theory 
of conformal algebras 
as the matrix algebra $M_n(\Bbbk )$ does in the ordinary linear algebra. 
In \cite{Kol2011}, it was shown 
that every finite associative torsion-free conformal algebra has a FFR. 
However, $\Cend_n$ is infinite, so 
associative conformal algebras  with a FFR form a more general 
conformal analogue of the class of finite-dimensional algebras. 

\begin{theorem}[\cite{Dolg2009}]\label{thm:H2Cend}
One has $\mathcal H^2(\Cend_n, M)=0$
for every conformal bimodule $M$ over the conformal algebra $\Cend_n$. 
\end{theorem}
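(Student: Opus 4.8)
The plan is to filter $\Cend_n$ by the degree in $x$, trivialize a given $2$-cocycle on the degree-zero part with the help of Theorem~\ref{thm:H2Cur}, and then remove what is left by an induction on the $x$-degree. Put $F_N=\{A(\partial,x)\in M_n(\Bbbk[\partial,x]):\deg_xA\le N\}$. Then $F_0=M_n(\Bbbk[\partial])$ is a conformal subalgebra isomorphic to $\Cur M_n(\Bbbk)$; one has $F_p\oo{\lambda}F_q\subseteq F_{p+q}[\lambda]$ and $\Cend_n=\bigcup_N F_N$; as an $H$-module $\Cend_n$ is free with basis $\{x^kE_{ij}:k\ge 0,\ 1\le i,j\le n\}$; and $\Cend_n$ is generated as a conformal algebra by $F_0$ together with the single element $q=xI_n$, since $q\oo{\lambda}E_{ij}=xE_{ij}$ and $x^{k}E_{il}$ lies in the subalgebra generated by $x^{k-1}E_{ij}$ and $xE_{jl}$. (Alternatively one may note that $\Cend_n\cong M_n(\Cend_1)$ and reduce to $n=1$ by conformal Morita invariance; I prefer to keep the argument self-contained.)

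Fix $\varphi\in\mathcal Z^2(\Cend_n,M)$. Its restriction to $F_0\cong\Cur M_n(\Bbbk)$ is a $2$-cocycle for $\Cur M_n(\Bbbk)$ with values in $M$ viewed as a $\Cur M_n(\Bbbk)$-bimodule, so by Theorem~\ref{thm:H2Cur} there is an $H$-linear $\psi_0\colon F_0\to M$ with $\varphi|_{F_0}=d_1\psi_0$. Extend $\psi_0$ to an $H$-linear map $\Cend_n\to M$ by letting it vanish on $x^kE_{ij}$ for $k\ge 1$; then $(d_1\psi_0)|_{F_0}$ is unchanged, and after replacing $\varphi$ by $\varphi-d_1\psi_0$ we may assume $\varphi_\lambda(a,b)=0$ for all $a,b\in F_0$. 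It now remains to produce an $H$-linear $\psi\colon\Cend_n\to M$ with $\psi|_{F_0}=0$ and $\varphi=d_1\psi$. Since $2$-cochains are $H^{\otimes2}$-linear and $\{x^kE_{ij}\}$ is an $H$-basis, it suffices to choose the elements $\psi(x^kE_{ij})\in M$ recursively on $k\ge1$ so that $\varphi-d_1\psi$ vanishes on every basis pair $(x^iE_{ab},x^jE_{cd})$. At the $k$-th step the cocycle identity \eqref{eq:Cocycle-lambda} for $\varphi$, applied to triples assembled from matrix units $E_{ab}\in F_0$ and from elements of $x$-degree $<k$, together with the relations $E_{ab}\oo{\lambda}E_{cd}=\delta_{bc}E_{ad}$ and the $\partial$-sesquilinearity, should express all the values $\varphi_\lambda(x^iE_{ab},x^jE_{cd})$ with $i+j=k$ through $\varphi$-values of smaller $x$-degree and through the $\psi$-values already fixed; this dictates the value of $\psi(x^kE_{ij})$, and the normalization of the previous step starts the induction.

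The principal obstacle is the consistency of this recursion. Because $\Cend_n$ has no conformal identity, the bimodule $M$ may be degenerate — in particular $I_n$ need not act as the identity operator on $M$, even with respect to the $0$-product — so one cannot simply pass to cochains normalized against a unit element, as in the classical separable case. One must check at every stage that the system of $M$-valued linear equations defining $\psi(x^kE_{ij})$, obtained by demanding that $\varphi-d_1\psi$ vanish on the new degree-$k$ basis pairs while still vanishing on all the lower ones, is solvable, and that its solution does not depend on the order in which degree $k$ is built up; both facts should follow from the cocycle identity for $\varphi$ on lower-degree elements, but keeping track of the $\partial$-sesquilinearity and of the degrees of the $\lambda$-polynomials that occur is delicate, and this bookkeeping is the technical heart of the proof. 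Once the recursion is carried through, $\varphi=d_1(\psi+\psi_0)$ is a coboundary, whence $\mathcal H^2(\Cend_n,M)=0$.
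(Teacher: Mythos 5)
This theorem is not proved in the paper at all: it is imported from \cite{Dolg2009} and used as a black box, so there is no internal proof to compare your argument against. Judged on its own terms, your text is a proof \emph{plan} rather than a proof, and the gap sits exactly where the whole difficulty of the theorem lives. The preparatory steps are fine: the filtration $F_N$ by $x$-degree, the identification $F_0\cong\Cur M_n(\Bbbk)$, the use of Theorem~\ref{thm:H2Cur} to normalize $\varphi$ so that $\varphi_\lambda(F_0,F_0)=0$ (legitimate, since $F_0$ is a subalgebra, so extending $\psi_0$ by zero does not disturb $d_1\psi_0$ on $F_0\otimes F_0$), and the observation that $\Cend_n$ is generated by $F_0$ and $xI_n$. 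But the recursive construction of $\psi(x^kE_{ij})$ is only described, not carried out. At step $k$ you must solve, for $n^2$ unknowns $\psi(x^kE_{ij})\in M$, a system consisting of one $M[\lambda]$-valued equation for \emph{each} pair $(x^iE_{ab},x^jE_{cd})$ with $i+j=k$ — a heavily overdetermined system — and simultaneously verify that the choice does not spoil the vanishing already achieved in lower degrees. You state that consistency ``should follow from the cocycle identity,'' but that deduction is precisely the content of the theorem; nothing in your write-up shows that the obstructions actually cancel, nor even exhibits a candidate closed formula for $\psi(x^kE_{ij})$ whose well-definedness one could then check. As it stands the argument could equally well be run for a conformal algebra where $\mathcal H^2$ does \emph{not} vanish (e.g.\ $\Cend_{1,x}\oplus\Cend_{1,x}$ acting on the bimodule of Example~\ref{exm:Cend+Cend}), so some structural input beyond ``filter and recurse'' is indispensable.

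Two further remarks. First, your plan never uses the fact that $\Cend_n$ has a conformal unit $e=I_n$ ($e\oo{\lambda}e=e$, $e\oo{0}a=a$); this is the feature that distinguishes $\Cend_n$ from the algebras $\Cend_{n,Q}$ with $\det Q\notin\Bbbk^*$, for which the paper shows the cohomology is generally nonzero, and it is the natural lever for normalizing cocycles and controlling the bimodule $M$ (cf.\ the Pierce-decomposition argument in Proposition~\ref{prop:SumWithUnit}). Any successful version of your recursion will have to invoke it, explicitly or implicitly. Second, the parenthetical reduction via ``conformal Morita invariance'' of $\mathcal H^2$ under $\Cend_n\cong M_n(\Cend_1)$ is asserted without justification and is not an established tool in this setting, so it cannot be used as a fallback. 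To turn the plan into a proof you would need either to carry out the degree-$k$ consistency check in full (this is essentially what \cite{Dolg2009} does, by a direct computation with the generators of $\Cend_n$), or to replace the recursion by a normalization argument based on the unit $I_n$ and the matrix idempotents $E_{ii}$.
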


Conformal subalgebra $M_n(\Bbbk[\partial]) \subset \Cend_n$ is isomorphic to the current 
conformal algebra $\Cur M_n(\Bbbk )$ which is known to be simple \cite{DK1998}.
Given a matrix $Q=Q(x)\in M_n(\Bbbk[x])$, the set
$\Cend_{n,Q}=Q M_n(\Bbbk[\partial, x])$ is a conformal subalgebra (even a right ideal) 
of $\Cend_n$. If $\det Q\ne 0$ then $\Cend_{n,Q}$ is simple, and vice versa \cite{Kac1996}.

\begin{theorem}[\cite{Kol2006_FFR}]\label{thm:FFRsimple}
 Let $C$ be a simple associative conformal algebra with a FFR. 
 Then either $C\simeq \Cur M_n(\Bbbk )$ or 
 $C\simeq \Cend_{n,Q}$, $\det Q\ne 0$.
Semisimple associative conformal algebra with a FFR is a direct sum of simple ones.
\end{theorem}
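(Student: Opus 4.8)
The plan is to use the finite faithful representation directly. \emph{Simple case: reduction to a faithful irreducible action.} Let $C$ be simple with a finite faithful conformal module $M$, and fix a composition series $0=M_0\subset M_1\subset\dots\subset M_k=M$ of conformal submodules. Each $P_i=\mathrm{Ann}_C(M_i/M_{i-1})$ is an ideal of $C$, so by simplicity $P_i=0$ or $P_i=C$. If $P_i=C$ for every $i$, then $C\oo\lambda M_j\subseteq M_{j-1}$ for all $j$, and since $C\oo\lambda C=C$, iterating the associativity of the module action gives $C\oo\lambda M\subseteq M_0=0$, contradicting faithfulness; hence some factor $N=M_i/M_{i-1}$ has $P_i=0$, and $C$ acts faithfully and irreducibly on the finite module $N$. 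Such an $N$ is either finite-dimensional over $\Bbbk$ (torsion over $H$) or free over $H$; the sesquilinearity relations force the $\lambda$-action of $C$ on a finite-dimensional $N$ to be trivial, against faithfulness, so $N\simeq\Bbbk^n[\partial]$ for some $n\ge1$ and $C$ embeds into $\Cend N\cong\Cend_n=M_n(\Bbbk[\partial,x])$, acting irreducibly on $\Bbbk^n[\partial]$.

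\emph{Identifying $C$ inside $\Cend_n$.} If, possibly after an automorphism of $\Cend_n$, the image of $C$ does not involve the variable $x$, i.e. lies in $M_n(\Bbbk[\partial])\simeq\Cur M_n(\Bbbk)$, then $C$ is an $H$-submodule of a free $H$-module of finite rank, hence a finite associative conformal algebra; being simple, it is a current conformal algebra over a matrix algebra, $C\simeq\Cur M_m(\Bbbk)$, by the structure theory of finite associative conformal algebras. Otherwise the $x$-dependence is essential, and here I would invoke a density-type argument in the conformal (pseudo-tensor) setting over the algebraically closed field $\Bbbk$: an irreducibly acting conformal subalgebra of $\Cend_n$ acts ``densely'' on $\Bbbk^n[\partial]$, and a structural analysis of such subalgebras — the ``symbol'' of $C$ inside $M_n(\Bbbk[x])$ is a right ideal, hence principal, generated by some matrix $Q=Q(x)$ — identifies $C$, up to an automorphism of $\Cend_n$, with $\Cend_{n,Q}=QM_n(\Bbbk[\partial,x])$; simplicity of $C$ then forces $\det Q\neq0$ by Kac's criterion that $\Cend_{n,Q}$ is simple if and only if $\det Q\neq0$. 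Making this density argument precise and carrying out the reduction to the symbol is the step I expect to be the main obstacle; the rest is bookkeeping in the pseudo-tensor category.

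\emph{Semisimple case.} Let now $C$ be semisimple (zero radical) with a finite faithful module $M$, with composition factors $N_1,\dots,N_k$. The ideals $J_i=\mathrm{Ann}_C(N_i)$ are finite in number, and, filtering as above, a suitably ordered conformal product of them is contained in $\mathrm{Ann}_C(M)=0$. Since $C$ has no nonzero nilpotent ideal, one deduces in the usual way that $C$ is a direct sum of finitely many minimal ideals $I_1,\dots,I_r$; each $I_j$ satisfies $I_j\oo\lambda I_j=I_j$ and is therefore a simple conformal algebra, and distinct $I_j$ are orthogonal because $I_j\oo\lambda I_l\subseteq I_j\cap I_l=0$ for $j\ne l$. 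Each $I_j$ acts faithfully on the finite module $M$ (as $\mathrm{Ann}_{I_j}(M)=I_j\cap\mathrm{Ann}_C(M)=0$), so it is a simple associative conformal algebra with a finite faithful representation; by the simple case, $I_j\simeq\Cur M_{m_j}(\Bbbk)$ or $I_j\simeq\Cend_{m_j,Q_j}$ with $\det Q_j\neq0$. Hence $C\simeq\bigoplus_j I_j$ is a direct sum of simple conformal algebras with a FFR.
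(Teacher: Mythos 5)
This theorem is imported: the paper cites it to \cite{Kol2006_FFR} and gives no proof of its own, so there is nothing internal to compare your argument with. Judged on its own terms, your reductions are sound and standard: the composition-series/annihilator argument producing a faithful irreducible factor $N$ (using $C\oo{\lambda}C=C$ for a simple algebra and conformal associativity to rule out a nilpotent action), the torsion-versus-free dichotomy killing the finite-dimensional case via sesquilinearity, the resulting embedding $C\hookrightarrow\Cend_n$ acting irreducibly on $\Bbbk^n[\partial]$, and the semisimple case via the annihilators $J_i$ of the composition factors (note $\bigl(\bigcap_i J_i\bigr)^k\subseteq\prod_i J_i=0$, so $\bigcap_i J_i=0$ by semisimplicity). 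Even there, the phrase ``one deduces in the usual way'' hides real work: minimal ideals need not exist a priori in this infinite setting, and you need primitivity to imply simplicity before the subdirect decomposition collapses to a direct sum.

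The genuine gap is the step you yourself flag: classifying the conformal subalgebras of $\Cend_n$ that act irreducibly on $\Bbbk^n[\partial]$. This is exactly the Boyallian--Kac--Liberati conjecture, and its proof is the entire technical content of the cited reference \cite{Kol2006_FFR}; it cannot be dispatched by ``a density-type argument.'' Indeed, no Jacobson-style density theorem can do the job here, because $\Cur M_n(\Bbbk)$, every $\Cend_{n,Q}$ with $\det Q\ne 0$, and $\Cend_n$ itself all act irreducibly on $\Bbbk^n[\partial]$, so ``dense'' cannot mean ``all of $\Cend_{n}$'' and the statement to be proved is precisely which proper irreducible subalgebras occur. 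Likewise, your ``symbol'' is undefined, and even granting that it is a principal right ideal $QM_n(\Bbbk[x])$, lifting from the symbol back to the conclusion $C\simeq\Cend_{n,Q}$ (up to an automorphism of $\Cend_n$) is the hard part, not bookkeeping. As written, the proposal reduces the theorem to its own most difficult ingredient and leaves that ingredient unproved.
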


It was shown in \cite{BKL2003} that, up to an isomorphism, one may assume $Q(x)$
is in the canonical diagonal form, i.e., 
\[
 Q(x) = \diag (f_1,\dots, f_n), \quad f_1\mid \dots \mid f_n.
\]
Moreover, if $\deg f_n>0$ then one may assume $f_n(0)=0$ since the shift map $x\mapsto x-\alpha$, $\alpha \in \Bbbk $, 
is an automorphism of $\Cend_n$.

\begin{remark}\label{rem:LeftRightCend}
 Note that $\Cend_{n,Q}$ is isomorphic as a conformal  algebra to 
 the left ideal $M_n(\Bbbk[\partial,x])Q(x-\partial)$ of $\Cend_n$ \cite{BKL2003}.
 It is easy to check by the definition of $\lambda $-product in Example~\ref{exmp:Cend}
 that the map $\theta : Q(x)A(\partial ,x)\mapsto A(\partial, x)Q(x-\partial )$, $A\in M_n(\Bbbk[\partial, x])$,
 is an isomorphism.
\end{remark}

Every associative conformal algebra $E$ with a FFR has a maximal nilpotent ideal 
$R$ such that $C=E/R$ also has a FFR. 
The conformal algebra $C$ obtained in this way is a direct sum of simple associative 
conformal algebras described by Theorem~\ref{thm:FFRsimple}. 
Theorems \ref{thm:H2Cur} and \ref{thm:H2Cend} imply $E = C\ltimes R$ if all 
summands in $C$ are of the form $\Cur_n$ or $\Cend_n$. 
In this paper, we consider all possible cases and explicitly determine those 
semisimple associative conformal algebras with a FFR that split 
in every extension with a nilpotent kernel.

\section{Extensions of $\Cend_{n,Q}$, $Q=\diag(1,\dots, 1, x)$}

Throughout this section, $C$ denotes the associative conformal 
algebra $\Cend_{n,Q}$ for $Q=\diag(1,\dots, 1, x)$. 
The main purpose of this section is to prove that for $n\ge 2$ 
we have $\mathcal H^2(C,M)=0$ for every conformal bimodule $M$ over~$C$.

\begin{proposition}\label{prop:DefRelCend_nQ}
The conformal algebra $C$
is generated by the set
\[
 X = \{e_{ij} \mid i=1,\dots, n-1, j=1,\dots, n\}\cup \{x_{ij} \mid i,j=1,\dots, n\}
\]
 relative to the following defining relations:
\begin{gather}
 e_{ij} \oo{\lambda}  e_{kl} = \delta_{jk} e_{il}, 
 \label{eq:DefRel1}\\
 x_{ij} \oo{\lambda} e_{kl} =  \delta_{jk} x_{il}, 
 \label{eq:DefRel2}\\
 e_{ij}{} \oo{\lambda} x_{kl} = \delta_{jk}(x_{il} +\lambda e_{il}), 
 \label{eq:DefRel3} \\
 x_{ij}\oo{1} x_{jl} = x_{il}, 
 \label{eq:DefRel4}\\
 x_{ij}\oo{\lambda } x_{kl} = 0, \quad j\ne k, 
 \label{eq:DefRel5}\\
 x_{ij}\oo{0} x_{jl} = x_{ik}\oo{0} x_{kl}. 
 \label{eq:DefRel6}
\end{gather}
\end{proposition}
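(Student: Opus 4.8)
I would begin by realizing $C=\Cend_{n,Q}=Q\,M_n(\Bbbk[\partial,x])$ concretely: as an $H$-module it is free with basis $\{\partial^{a}x^{m}E_{ij}\}$ ($a,m\ge0$, $1\le i,j\le n$, where $E_{ij}$ is the matrix unit), subject only to the restriction $m\ge1$ when $i=n$, since the last row of $Q$ is $x$. Under this identification one puts $e_{ij}=E_{ij}$ for $i<n$ and $x_{ij}=xE_{ij}$ for all $i,j$, and then \eqref{eq:DefRel1}--\eqref{eq:DefRel6} are checked by substituting into the $\lambda$-product $A\oo{\lambda}B=A(-\lambda,x)B(\partial+\lambda,x+\lambda)$ of Example~\ref{exmp:Cend}; for instance $e_{ij}\oo{\lambda}x_{kl}=\delta_{jk}(x+\lambda)E_{il}$ gives \eqref{eq:DefRel3}, while $x_{ij}\oo{\lambda}x_{jl}=x(x+\lambda)E_{il}$ gives \eqref{eq:DefRel4}--\eqref{eq:DefRel6}. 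Thus there is a surjective conformal homomorphism $\pi\colon\hat C\to C$, where $\hat C$ is the associative conformal algebra presented by the generators $X$ and the relations \eqref{eq:DefRel1}--\eqref{eq:DefRel6}, and the entire problem reduces to proving that $\pi$ is injective.

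To do that, the plan is to produce an $H$-spanning set of $\hat C$ on which $\pi$ is injective. Inside $\hat C$ I would introduce, for $m\ge1$, the element $w^{(m)}_{ij}=x_{ij_1}\oo{0}x_{j_1j_2}\oo{0}\cdots\oo{0}x_{j_{m-1}j}$ (so $w^{(1)}_{ij}=x_{ij}$), using \eqref{eq:DefRel6} and the associativity of $\oo0$ (a special case of \eqref{eq:Conf_assoc(n)}) to see that it does not depend on the intermediate indices $j_1,\dots,j_{m-1}$, hence is well-defined, with $\pi(w^{(m)}_{ij})=x^{m}E_{ij}$. Let $S\subseteq\hat C$ be the $H$-span of $\{w^{(m)}_{ij}\mid m\ge1\}\cup\{e_{ij}\mid i<n\}$. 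Then $\pi(S)$ is spanned over $H$ by the monomials $x^{m}E_{ij}$ and $E_{ij}$, which form part of the basis of $C$ above and are in particular $H$-independent; hence $\pi|_{S}$ is injective. Since $S$ contains the generating set $X$, it therefore suffices to prove that $S$ is a conformal subalgebra, i.e.\ that $S$ is closed under all $\lambda$-products — then $S=\hat C$ and $\pi$ is an isomorphism.

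The technical core is thus to verify $w^{(m)}_{ij}\oo{\lambda}w^{(p)}_{kl}\in S[\lambda]$ for all indices, which I would carry out by reducing each such product by means of the defining relations, conformal associativity \eqref{eq:Conf_assoc(lambda)}--\eqref{eq:Conf_assoc(n)}, and sesquilinearity \eqref{eq:3/2-linearity}; products with mismatched inner indices collapse to $0$ by \eqref{eq:DefRel1}, \eqref{eq:DefRel2}, \eqref{eq:DefRel5}. The one genuinely nontrivial ingredient is the identity $x_{ij}\oo{k}x_{jl}=0$ for all $k\ge2$, which I would establish by using \eqref{eq:DefRel2}--\eqref{eq:DefRel3} and the associativity of $\oo0$ to rewrite $x_{ij}\oo{k}x_{jl}$ as $x_{in}\oo{k}x_{nl}$ and then regrouping so that \eqref{eq:DefRel3} --- which caps the $\lambda$-degree of $e_{ab}\oo{\lambda}x_{cd}$ at one --- forces the vanishing. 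Together with \eqref{eq:DefRel4} and \eqref{eq:DefRel6} this pins down $x_{ij}\oo{\lambda}x_{jl}=w^{(2)}_{il}+\lambda\,x_{il}$, after which an induction of the same type yields closed $\lambda$-polynomial formulas for $e_{ij}\oo{\lambda}w^{(m)}_{kl}$ and $w^{(m)}_{ij}\oo{\lambda}w^{(p)}_{jl}$ with coefficients among the $w^{(s)}_{il}$ (reading $w^{(0)}_{il}=e_{il}$, which occurs only for $i<n$), all lying in $S[\lambda]$. I expect the main obstacle to be precisely this closure computation: tracking the $\lambda$-degrees, which grow with $m$ in $e_{ij}\oo{\lambda}w^{(m)}_{kl}$, and, above all, coping with the absence of the generators $e_{nj}$, so that every manipulation that would naturally pass through the last row has to be rerouted via $x_{nj}=x_{nk}\oo{0}e_{kj}$ and \eqref{eq:DefRel6}. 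This last-row bookkeeping, rather than any conceptual difficulty, is where essentially all the work sits --- and is plausibly why the $n=1$ case, in which the ``last row'' is the whole algebra, requires the separate treatment of \cite{Kozlov2017}.
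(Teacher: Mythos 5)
Your argument is correct and arrives at the same combinatorial core as the paper's proof, but implements the key step differently. The paper realizes the presented algebra as a quotient of the free associative conformal algebra $\ConfAs(X,2)$ \emph{with locality bound $N=2$} and invokes the Gr\"obner--Shirshov (Composition--Diamond) machinery of \cite{BFK2000,BFK2004}: it lists the $S$-reduced words, observes that their images under the evaluation homomorphism are $H$-linearly independent in $C$, and concludes that the relations form a Gr\"obner--Shirshov basis, so the evaluation map is an isomorphism. Your spanning set $\{w^{(m)}_{ij}\}\cup\{e_{ij}\}$ coincides exactly with the paper's list of reduced words (the word $x_{k1}\oo{0}x_{11}\oo{0}\dots\oo{0}x_{1l}$ with $t-1$ middle factors is your $w^{(t+1)}_{kl}$), and your linear-independence step is the same; what you replace is the appeal to the Composition--Diamond lemma by a direct verification that the $H$-span of these elements is closed under all $\lambda$-products. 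That trade is legitimate: it costs a longer closure computation but avoids setting up the conformal Gr\"obner--Shirshov formalism and justifying the list of reduced words. Two points need to be made explicit for your version to be airtight. First, ``the associative conformal algebra presented by $X$ and these relations'' is undefined until a locality bound is fixed (there is no free associative conformal algebra on a set without a locality function), which is why the paper works in $\ConfAs(X,2)$; your $\hat C$ is well defined, and independent of the bound chosen, only because the relations force all localities to be at most $2$ --- precisely the identity $x_{ij}\oo{k}x_{jl}=0$ for $k\ge 2$ that you single out. Second, that identity does follow from the relations, and more directly than your sketch suggests: writing $x_{ij}=x_{i1}\oo{0}e_{1j}$ via \eqref{eq:DefRel2} and using the associativity specialization $(a\oo{0}b)\oo{k}c=a\oo{0}(b\oo{k}c)$ gives $x_{ij}\oo{k}x_{jl}=x_{i1}\oo{0}\bigl(e_{1j}\oo{k}x_{jl}\bigr)=0$ for $k\ge 2$ by \eqref{eq:DefRel3}. (This uses the generator $e_{1j}$, hence $n\ge 2$, consistent with your closing observation about why $n=1$ is special.) With these two points spelled out, your proof is complete.
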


\begin{proof}
Let $\ConfAs(X, 2)$ be the free associative conformal algebra
generated by $X$ with locality bound $N=2$ \cite{Roit1999}. 
Denote by $\ConfAs(X, 2\mid S)$ the quotient of $\ConfAs(X, 2)$ modulo 
the ideal generated by defining relations 
\eqref{eq:DefRel1}--\eqref{eq:DefRel6}.
Obviously, there is a homomorphism $\psi: \ConfAs (X,2\mid S)\to C$ sending $e_{ij}$ 
to the corresponding unit matrix 
 and $x_{ij}$ to $xe_{ij}$.  
To study $\ConfAs(X, 2\mid S)$, we apply the Gr\"obner---Shirshov bases 
technique for associative conformal algebras \cite{BFK2000,BFK2004}.

Assume the order on $\ConfAs(X,2)$ is induced as in \cite{BFK2000} by the following order on $X$:
$e_{ij}<x_{kl}$, 
$e_{ij}<e_{kl}$ or $x_{ij}<x_{kl}$ if and only if $(ij)<(kl)$ lexicographically.
The set of $S$-reduced words consists of 
\[
\begin{aligned}
& \partial ^s e_{ij},\quad i=1,\dots, n-1,\ j=1,\dots, n, \\
& \partial ^s x_{kl}, \quad k,l=1,\dots, n, \\
& \partial^s (x_{k1}\oo{0} \underset{t-1}{\underbrace{x_{11} \oo{0} \dots \oo{0} x_{11}}}\oo{0} x_{1l} ),
\quad k,l=1,\dots, n,
\end{aligned}
\]
$s\ge 0$, $t\ge 1$.

The images of these words under $\psi $ are linearly independent in $C$, so $S$ 
is a Gr\"obner---Shirshov basis of $C$ with respect to the generators~$X$.
\end{proof}

Let us now reduce the set of generators.

\begin{corollary}\label{cor:DefRelCend_nQ}
The conformal algebra $C$ is generated by 
$X' = \{e_{ij}, e_{1n}, x_{n1}\mid i,j=1,\dots,n-1 \}$ 
relative to the following 
defining relations:
\begin{gather}
 e_{ij}\oo{\lambda } e_{kl} = \delta_{jk} e_{il},  \quad i,j,k,l=1,\dots, n-1, \label{eq:DefRel2-1} \\
 e_{1n}\oo{\lambda } e_{ij} = 0, \quad i,j    =1,\dots, n-1,                   \label{eq:DefRel2-2} \\
 e_{11}\oo{\lambda } e_{1n} = e_{1n},                                          \label{eq:DefRel2-3} \\
 e_{ij}\oo{\lambda } x_{n1} = 0, \quad i,j=1,\dots, n-1,                       \label{eq:DefRel2-4} \\
 x_{n1}\oo{\lambda } e_{11} = x_{n1},                                          \label{eq:DefRel2-5} \\
 e_{1n}\oo{1} x_{n1} = e_{11},\quad e_{1n}\oo{m} x_{n1} = 0, \ m>1,            \label{eq:DefRel2-6} 
\end{gather}
\end{corollary}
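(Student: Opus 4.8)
The plan is to establish the corollary by constructing a pair of mutually inverse homomorphisms between $C$ and the conformal algebra $\tilde C:=\ConfAs(X',2\mid S')$, where $S'$ denotes the system of relations \eqref{eq:DefRel2-1}--\eqref{eq:DefRel2-6}. First I would check that under the assignment $e_{ij}\mapsto E_{ij}$, $e_{1n}\mapsto E_{1n}$, $x_{n1}\mapsto xE_{n1}$ every relation in $S'$ holds in $C=\Cend_{n,Q}$; each is immediate from the multiplication rule $A\oo{\lambda}B=A(-\lambda,x)B(\partial+\lambda,x+\lambda)$ of Example~\ref{exmp:Cend}, the only slightly non-obvious one being \eqref{eq:DefRel2-6}, which follows from $E_{1n}\oo{\lambda}(xE_{n1})=(x+\lambda)E_{11}$. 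This yields a homomorphism $\phi\colon\tilde C\to C$, and $\phi$ is surjective because the generating set $X$ of Proposition~\ref{prop:DefRelCend_nQ} already lies in the conformal subalgebra of $C$ generated by $X'$: in $C$ one has $e_{in}=e_{i1}\oo{0}e_{1n}$, $x_{11}=e_{1n}\oo{0}x_{n1}$, $x_{nl}=x_{n1}\oo{0}e_{1l}$, $x_{1l}=x_{11}\oo{0}e_{1l}$ and $x_{il}=e_{i1}\oo{0}x_{1l}$ (reading $e_{1n}$ for $e_{1l}$ when $l=n$), so every $e_{ij}$ and every $x_{ij}$ is expressed through $X'$.

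For the reverse direction I would introduce in $\tilde C$ the elements $\bar e_{in}$ and $\bar x_{ij}$ defined by exactly these formulas and verify that the full system \eqref{eq:DefRel1}--\eqref{eq:DefRel6} is satisfied by $\{e_{ij}\}\cup\{\bar e_{in}\}\cup\{\bar x_{ij}\}$; by Proposition~\ref{prop:DefRelCend_nQ} this produces a homomorphism $\psi\colon C\to\tilde C$. The verification uses only the relations $S'$ and conformal associativity \eqref{eq:Conf_assoc(lambda)}, most often in the specialized form $a\oo{0}(b\oo{\mu}c)=(a\oo{0}b)\oo{\mu}c$. A preliminary observation is that the auxiliary products occurring here do not depend on the spectral parameter --- for instance $e_{i1}\oo{\lambda}e_{1n}=e_{i1}\oo{0}e_{1n}$ and $x_{n1}\oo{\lambda}e_{1l}=x_{n1}\oo{0}e_{1l}$, obtained by inserting a factor $e_{11}$ and applying \eqref{eq:DefRel2-3}, \eqref{eq:DefRel2-5} and associativity --- so that $\bar e_{in}$ and $\bar x_{ij}$ are well defined and respect the locality bound $N=2$. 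Once \eqref{eq:DefRel1}--\eqref{eq:DefRel6} are checked, it is routine that $\phi\circ\psi=\id_C$ on the generators $X$ and $\psi\circ\phi=\id_{\tilde C}$ on the generators $X'$, so $\phi$ is an isomorphism and $S'$ is a complete set of defining relations for $C$ with respect to $X'$.

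I expect the middle step --- deducing \eqref{eq:DefRel1}--\eqref{eq:DefRel6} from $S'$ --- to be the main obstacle, and two features of it deserve care. First, the single relation $e_{1n}\oo{1}x_{n1}=e_{11}$ in \eqref{eq:DefRel2-6} must account for the ``$+\lambda e_{il}$'' term in \eqref{eq:DefRel3}, for relation \eqref{eq:DefRel4}, and for the parameter-independence in \eqref{eq:DefRel5}--\eqref{eq:DefRel6}; concretely one propagates the identity $e_{11}\oo{\lambda}\bar x_{1l}=\bar x_{1l}+\lambda e_{1l}$, itself derived from $e_{1n}\oo{\lambda}x_{n1}=\bar x_{11}+\lambda e_{11}$ by two applications of \eqref{eq:Conf_assoc(lambda)}. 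Second, one must ensure that no product of the $\bar x_{ij}$ develops an unwanted $\oo{m}$-component with $m\ge2$; this again reduces to repeated use of $a\oo{0}(b\oo{\mu}c)=(a\oo{0}b)\oo{\mu}c$ to bring iterated $\oo{0}$-products of the $\bar x_{ij}$ to the canonical shape $x_{k1}\oo{0}x_{11}\oo{0}\cdots\oo{0}x_{11}\oo{0}x_{1l}$ singled out in the proof of Proposition~\ref{prop:DefRelCend_nQ}. Alternatively, one may bypass $\psi$ and instead check directly that $S'$, ordered by $e_{11}<\cdots<e_{(n-1)(n-1)}<e_{1n}<x_{n1}$, is a Gr\"obner---Shirshov basis and that $\phi$ carries the resulting $S'$-reduced words bijectively onto the linear basis of $C$ exhibited in that proof; the required composition checks amount to the same computations in a different arrangement.
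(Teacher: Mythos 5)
Your proposal is correct and follows essentially the same route as the paper: both reduce the corollary to Proposition~\ref{prop:DefRelCend_nQ} by expressing the generators $X$ through $X'$ and verifying that the resulting elements of $\ConfAs(X',2\mid S')$ satisfy all of \eqref{eq:DefRel1}--\eqref{eq:DefRel6}, the only cosmetic difference being that the paper invokes simplicity of $C$ to conclude the induced map $C\to\ConfAs(X',2\mid S')$ is injective, whereas you check the two homomorphisms are mutually inverse on generators. Your identification of \eqref{eq:DefRel4} and the parameter-independence issues as the delicate verifications matches the one computation the paper actually carries out.
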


\begin{proof}
Since $C$ is simple, it is enough to note that if \eqref{eq:DefRel2-1}--\eqref{eq:DefRel2-6} hold then the 
elements of $X'$ together with
\[
 \begin{gathered}
 e_{in} = e_{i1}\oo{0} e_{1n}, \quad i=2,\dots, n-1, \\
 x_{nj} = x_{n1}\oo{0} e_{1j}, \quad j=2,\dots, n, \\
 x_{ij} = e_{in}\oo{0}x_{nj}, \quad i=1,\dots, n-1,\ j=1,\dots, n
 \end{gathered}
\]
satisfy all relations \eqref{eq:DefRel1}--\eqref{eq:DefRel6}.
For example, let us check \eqref{eq:DefRel4}. First, 
\[
 x_{n1}\oo{m} e_{1l} = (x_{n1}\oo{0} e_{11})\oo{m} e_{1l} = 0
\]
for $m>0$ by \eqref{eq:DefRel2-5} and \eqref{eq:DefRel2-1}.
Next, it follows from conformal associativity that
\begin{multline}\nonumber
x_{ij}\oo{m} x_{jl} 
= (e_{i1}\oo{0}e_{1n}\oo{0} x_{n1}\oo{0} e_{1j})\oo{m} (e_{j1}\oo{0}e_{1n}\oo{0} x_{n1}\oo{0} e_{1l})\\
= (e_{i1}\oo{0}e_{1n}\oo{0} x_{n1}\oo{0} e_{1j}\oo{0} e_{j1})\oo{m}(e_{1n}\oo{0} x_{n1}\oo{0} e_{1l}) \\
= (e_{i1}\oo{0}e_{1n}\oo{0} x_{n1})\oo{m}(e_{1n}\oo{0} x_{n1}\oo{0} e_{1l}) \\
= (e_{i1}\oo{0}e_{1n}\oo{0} x_{n1}\oo{0}e_{1n})\oo{m} (x_{n1}\oo{0} e_{1l}) \\
= (e_{i1}\oo{0}e_{1n}\oo{0} x_{n1})\oo{0} \sum\limits_{s\in \mathbb Z_+}\binom{m}{s}((e_{1n}\oo{s} x_{n1})\oo{m-s} e_{1l}) \\
= (e_{i1}\oo{0}e_{1n}\oo{0} x_{n1})\oo{0} ((e_{1n}\oo{m} x_{n1})\oo{0} e_{1l})
= \begin{cases}
x_{il},    & m=1, \\
0,  & m>1.
  \end{cases}
\end{multline}
Other relations \eqref{eq:DefRel1}--\eqref{eq:DefRel6} can be checked in a similar way.
\end{proof}

Let $M$ be an arbitrary conformal bimodule over $C$.

\begin{lemma}\label{lem:L0}
 For every 2-cocycle $\varphi \in \mathcal Z^2(C,M)$ there exists 
 $\varphi'\in \mathcal Z^2(C,M)$ such that 
 $\varphi - \varphi' \in \mathcal B^2(C,M)$ 
 and
\begin{equation}\label{eq:CC-Cend0}
 \begin{gathered}
   \varphi'_\lambda (u_{ij},v_{kl})=0, \quad 1\le i,j,k,l\le n-1,\ u,v\in \{e,x\}.  
 \end{gathered}
\end{equation}
\end{lemma}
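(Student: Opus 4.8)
The plan is to reduce the statement to the already-established vanishing $\mathcal{H}^2(\Cend_{n-1},M)=0$ (Theorem~\ref{thm:H2Cend}), which is available exactly because $n\ge 2$. Let $C_0\subseteq C$ be the conformal subalgebra generated by $\{e_{ij},x_{ij}\mid 1\le i,j\le n-1\}$. Realizing $C=\Cend_{n,Q}$ inside $\Cend_n=M_n(\Bbbk[\partial,x])$ via $e_{ij}\mapsto e_{ij}$, $x_{ij}\mapsto x e_{ij}$, one checks that $C_0$ is the image of the corner embedding of $\Cend_{n-1}=M_{n-1}(\Bbbk[\partial,x])$ onto the top-left $(n-1)\times(n-1)$ block: this image lies inside $C=QM_n(\Bbbk[\partial,x])$ because $Q=\diag(1,\dots,1,x)$ restricts to the identity on the first $n-1$ rows, and it is visibly closed under the $\lambda$-product of Example~\ref{exmp:Cend}. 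Moreover $\Cend_{n-1}$ is generated as a conformal algebra by $\{e_{ij},xe_{ij}\}$, since every $x^{t}e_{ij}$ is recovered from iterated $0$-products $x_{i1}\oo{0}x_{11}\oo{0}\dots\oo{0}x_{1j}$ (this also matches the $S$-reduced words listed in the proof of Proposition~\ref{prop:DefRelCend_nQ} with indices $\le n-1$). Hence $C_0\cong\Cend_{n-1}$.

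Next I would restrict everything to $C_0$. Being a conformal bimodule over $C$, $M$ is a conformal bimodule over $C_0$ by restricting the left and right actions, and for any $\varphi\in\mathcal{Z}^2(C,M)$ the restriction $\varphi|_{C_0}$ is a $2$-cocycle in $\mathcal{Z}^2(C_0,M)$: the cocycle identity \eqref{eq:Cocycle-lambda} involves only $\lambda$-products of the arguments, and these stay inside the subalgebra $C_0$. By Theorem~\ref{thm:H2Cend} applied to $\Cend_{n-1}$ we get $\mathcal{H}^2(C_0,M)=0$, so there is a $1$-cochain $\chi\in\mathcal{C}^1(C_0,M)$, i.e.\ an $H$-linear map $\chi\colon C_0\to M$, with $\varphi|_{C_0}=d_1\chi$.

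Then I would extend and subtract. From the Gr\"obner---Shirshov basis found in the proof of Proposition~\ref{prop:DefRelCend_nQ}, $C$ is a free $H$-module whose exhibited $H$-basis contains an $H$-basis of $C_0$ (namely the reduced words all of whose matrix indices are $\le n-1$); hence $C_0$ is a direct $H$-module summand of $C$. Let $\tilde\chi\in\mathcal{C}^1(C,M)$ be the $H$-linear extension of $\chi$ that vanishes on the chosen complement, and set $\varphi'=\varphi-d_1\tilde\chi$. Then $\varphi'\in\mathcal{Z}^2(C,M)$ and $\varphi-\varphi'=d_1\tilde\chi\in\mathcal{B}^2(C,M)$. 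For $u,v\in\{e,x\}$ and $1\le i,j,k,l\le n-1$ we have $u_{ij}\oo{\lambda}v_{kl}\in C_0[\lambda]$, so every argument fed to $\tilde\chi$ in the formula for $(d_1\tilde\chi)_\lambda(u_{ij},v_{kl})=u_{ij}\oo{\lambda}\tilde\chi(v_{kl})-\tilde\chi(u_{ij}\oo{\lambda}v_{kl})+\tilde\chi(u_{ij})\oo{\lambda}v_{kl}$ lies in $C_0$; thus $(d_1\tilde\chi)_\lambda(u_{ij},v_{kl})=(d_1\chi)_\lambda(u_{ij},v_{kl})=\varphi_\lambda(u_{ij},v_{kl})$, and therefore $\varphi'_\lambda(u_{ij},v_{kl})=0$, which is precisely \eqref{eq:CC-Cend0}.

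There is no deep obstacle here: the whole content sits in the input Theorem~\ref{thm:H2Cend}. The two points that need care are (i) the identification $C_0\cong\Cend_{n-1}$, which is what forces the hypothesis $n\ge 2$ --- for $n=1$ there is no such corner block and the argument degenerates, in accordance with the fact that a different method was needed in \cite{Kozlov2017}; and (ii) verifying that extending $\chi$ by zero off $C_0$ does not disturb the values of $\varphi'$ on $C_0\otimes C_0$, which works because the middle term of the Hochschild differential only feeds $d_1\tilde\chi$ arguments that already lie in the subalgebra $C_0$.
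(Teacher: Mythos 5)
Your proof is correct and follows essentially the same route as the paper: identify $C_0$ with $\Cend_{n-1}$, invoke Theorem~\ref{thm:H2Cend} to write $\varphi|_{C_0}=d_1\chi$, extend $\chi$ $H$-linearly to $C$, and subtract the coboundary. You merely make explicit two points the paper leaves implicit (that the extension of $\chi$ exists because $C_0$ is an $H$-module direct summand, and that the middle term of $d_1\tilde\chi$ only ever evaluates $\tilde\chi$ on $C_0$), so there is nothing to correct.
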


\begin{proof}
Note that the subalgebra $C_0\subset C$ generated by  
$u_{ij}$, $1\le i,j\le n-1$, $u\in \{e,x\}$,
is isomorphic to $\Cend_{n-1}$ (upper left block of size $n-1$ in $\Cend_{n,Q}$). 
The restriction of $\varphi $ on $C_0$ 
belongs to $\mathcal Z^{2}(C_0,M)$. By Theorem~\ref{thm:H2Cend}, $\mathcal H^2(C_0,M)=0$, 
so there exists 
$\tau \in \mathcal C^1(C_0,M)$ such that 
$(d_1\tau)_\lambda (u,v)=\varphi_\lambda (u,v)$ 
for all $u,v\in C_0$. Let us choose an arbitrary extension 
of $\tau $ to an $H$-linear map $C\to M$ and note 
that $\varphi' = \varphi -d_1\tau $ is the desired cocycle. 
\end{proof}

In the subsequent computations, we will use the following notation. 
For $a,b\in C$ and $\varphi\in \mathcal C^2(C,M)$, denote 
\[
 \{a\oo{\lambda } b\} = (a\oo{-\partial - \lambda } b)
 =\sum\limits_{n,s\in \mathbb Z_+} \frac{(-\lambda )^n}{n!} \frac{(-\partial)^s}{s!} (a\oo{n+s} b), 
\]
and, similarly,
\[
 \varphi_\lambda \{a,b\} = \varphi_{-\partial -\lambda }(a,b)= \sum\limits_{n,s\in \mathbb Z_+}
 \frac{(-\lambda )^n}{n!} \frac{(-\partial)^s}{s!} \varphi_{n+s}(a,b).
\]
It is well-known (see \cite{KacForDist1999}) that the following relation holds on an associative 
conformal algebra:
\begin{equation}\label{eq:RightMul_relation}
 a\oo{\lambda } \{b\oo{\mu } c \} = \{(a\oo{\lambda } b)\oo{\mu} c\}.
\end{equation}
Therefore, similar relations hold for conformal bimodule multiplications.

For a 2-cocycle $\varphi \in \mathcal Z^2(C,M)$, 
Theorem~\ref{thm:ExtCocycle}  and relation \eqref{eq:RightMul_relation} imply
\begin{equation}
 \varphi_\lambda(a, \{b\oo{\mu}c\}) + a\oo{\lambda}\varphi_{\mu}\{b,c\} = 
\varphi_{\mu}\{ (a_{\lambda}b), c\} + \{\varphi_\lambda(a, b)\oo{\mu}c\}.
\end{equation}
Since $\lambda $-product is sesquilinear, we also have 
\[
 (\varphi _\mu \{a,b\} \oo{\lambda } c ) = (\varphi_{\lambda-\mu}(a,b)\oo{\lambda }c)
\]
for all $a,b,c\in C$.

\begin{lemma}\label{lem:L1}
 For every 2-cocycle $\varphi \in \mathcal Z^2(C,M)$ there exists 
 $\varphi'\in \mathcal Z^2(C,M)$ such that 
 $\varphi - \varphi' \in \mathcal B^2(C,M)$, 
 \eqref{eq:CC-Cend0} holds,
 and
\begin{gather}
 \varphi'_\lambda (e_{1n}, e_{ij}) =0, \quad i,j=1,\dots, n-1,   \label{eq:EELambda-1}\\
 \varphi'_\lambda (e_{11}, e_{1n}) = 0.                          \label{eq:EELambda-2}
\end{gather}
\end{lemma}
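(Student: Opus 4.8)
The plan is to start from a cocycle $\varphi$ already normalized as in Lemma~\ref{lem:L0}, so $\varphi_\lambda(u_{ij},v_{kl})=0$ for all $u,v\in\{e,x\}$ and $1\le i,j,k,l\le n-1$, and to kill the two remaining values by subtracting a further coboundary $d_1\tau$ for a suitable $\tau\in\mathcal C^1(C,M)$. The subtlety is that $\tau$ must be chosen so that $d_1\tau$ does not reintroduce nonzero values among the already-vanishing brackets \eqref{eq:CC-Cend0}; equivalently, we want $d_1\tau$ to vanish on all pairs from $C_0\cong\Cend_{n-1}$ but to have prescribed nonzero values on the pairs $(e_{1n},e_{ij})$ and $(e_{11},e_{1n})$. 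Since $(d_1\tau)_\lambda(a,b) = a\oo{\lambda}\tau(b) - \tau(a\oo{\lambda}b) + \tau(a)\oo{\lambda}b$, a natural attempt is to take $\tau$ supported off $C_0$, e.g. $\tau(e_{1n}) = w$ for some $w\in M$ to be determined and $\tau=0$ on the other generators; the contribution of such a $\tau$ to a bracket of two elements of $C_0$ then only enters through $-\tau(a\oo\lambda b)$, and one must check that $a\oo\lambda b$ never equals $e_{1n}$ (or a $\partial$-multiple thereof) for $a,b\in C_0$, which is clear from the defining relations since products inside $C_0$ stay inside $C_0$.

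The concrete steps I would carry out are as follows. First, exploit the cocycle identity \eqref{eq:Cocycle-lambda} together with the idempotent-type relations to constrain $\varphi_\lambda(e_{1n},e_{ij})$ and $\varphi_\lambda(e_{11},e_{1n})$. For \eqref{eq:EELambda-1}, use relation \eqref{eq:DefRel2-2}, $e_{1n}\oo{\lambda}e_{ij}=0$ in $C$ — but note this is a relation \emph{in the algebra}, not an identity we may use directly on cochains; instead I would write $e_{1n} = e_{11}\oo{0}e_{1n}$ from \eqref{eq:DefRel2-3} (or $e_{1n}=e_{11}\oo\lambda e_{1n}$, sesquilinearly) and feed the triple $(e_{11},e_{1n},e_{ij})$ into \eqref{eq:Cocycle-lambda}. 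Using that $\varphi$ already vanishes on $(e_{11},e_{ij})$ and that $e_{1n}\oo\mu e_{ij}=0$, the cocycle identity collapses to a relation expressing $\varphi_\lambda(e_{1n},e_{ij})$ in terms of module actions on $\varphi_\mu(e_{1n},e_{ij})$ itself and on $\varphi$-values that are already zero; one deduces that $\varphi_\lambda(e_{1n},e_{ij})$ lies in $e_{11}\oo{\lambda} M$-type expressions and can be brought to a form $e_{11}\oo{\lambda}(\text{something})$, which is exactly $(d_1\tau)_\lambda(e_{1n},e_{ij})$ for $\tau(e_{1n})$ chosen appropriately. Then, with \eqref{eq:EELambda-1} in force, I would handle \eqref{eq:EELambda-2} similarly: plug $(e_{11},e_{11},e_{1n})$ into \eqref{eq:Cocycle-lambda}, use $e_{11}\oo\lambda e_{11}=e_{11}$ (so the middle terms telescope) and $e_{11}\oo\mu e_{1n}=e_{1n}$, to get an identity of the form $\varphi_\lambda(e_{11},e_{1n}) = e_{11}\oo\lambda\varphi_\mu(e_{11},e_{1n}) - \varphi_\lambda(e_{11},e_{1n}) + (\text{already-zero terms})$ evaluated compatibly in $\lambda,\mu$; specializing $\mu$ (e.g. $\mu=0$) and using sesquilinearity forces $\varphi_\lambda(e_{11},e_{1n})$ to be a $\partial$-torsion-free element killed by an idempotent action, and a final adjustment by $d_1\tau$ with $\tau(e_{1n})$ shifted by the residual value removes it.

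The main obstacle I anticipate is the bookkeeping of which coboundary corrections interfere with which normalizations: after choosing $\tau$ to achieve \eqref{eq:EELambda-1}, one must re-verify \eqref{eq:CC-Cend0} and then, when choosing a second correction for \eqref{eq:EELambda-2}, re-verify both \eqref{eq:CC-Cend0} and \eqref{eq:EELambda-1}. This forces $\tau$ to be supported only on the generators $e_{1n}$ (and possibly $x_{n1}$), and one needs the combinatorial fact that $e_{1n}$ and its $\partial$-multiples never arise as $\oo{\lambda}$-products of elements of $C_0$, nor as products appearing when re-expanding $e_{1n}\oo\lambda e_{ij}$ via \eqref{eq:DefRel2-3}; this is where the Gröbner–Shirshov description from Proposition~\ref{prop:DefRelCend_nQ} is essential, since it pins down exactly which reduced words can occur. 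The argument genuinely uses $n\ge 2$ in that $C_0\cong\Cend_{n-1}$ is a nontrivial conformal algebra to which Theorem~\ref{thm:H2Cend} applies; for $n=1$ there is no such block and the inductive leverage disappears, consistent with the remark in the introduction.
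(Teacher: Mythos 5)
Your overall strategy coincides with the paper's: both proofs correct $\varphi$ by a coboundary $d_1\tau$ with $\tau$ supported only on the generator $e_{1n}$ (so that $(d_1\tau)_\lambda(C_0,C_0)=0$ because $C_0$-products stay in $C_0$ and never produce $e_{1n}$), and both exploit the cocycle identity together with the idempotents $e_{11}$ and $e=e_{11}+\dots+e_{n-1\,n-1}$. However, your sketch has concrete gaps at exactly the points where the work is. First, you never produce the element $w=\tau(e_{1n})$; the paper takes $\tau(e_{1n})=\varphi_0\{e_{11},e_{1n}\}-\{e_{11}\oo{0}\varphi_0\{e_{1n},e\}\}$, where $\{\cdot\}$ is the evaluation at $-\partial-\lambda$. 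The second summand is not optional: when one expands $\tau(e_{1n})\oo{\lambda}e_{ij}=\varphi_\lambda(e_{11},e_{1n})\oo{\lambda}e_{ij}$ by the cocycle identity, a cross term $e_{11}\oo{\lambda}\varphi_0(e_{1n},e_{ij})$ appears which is not zero a priori and must be cancelled by that second summand. Your plan does not account for this term. Second, your description of $(d_1\tau)_\lambda(e_{1n},e_{ij})$ as something of the form $e_{11}\oo{\lambda}(\text{something})$ is incorrect: since $\tau$ vanishes on $C_0$ and $e_{1n}\oo{\lambda}e_{ij}=0$, one has $(d_1\tau)_\lambda(e_{1n},e_{ij})=\tau(e_{1n})\oo{\lambda}e_{ij}$, a \emph{right} module action, so the target you are trying to hit with the left action of $e_{11}$ is the wrong one.

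Third, the identity you extract from the triple $(e_{11},e_{11},e_{1n})$ is not what \eqref{eq:Cocycle-lambda} gives. Using $\varphi_\lambda(e_{11},e_{11})=0$ and $e_{11}\oo{\lambda}e_{11}=e_{11}$, the correct collapse is
\begin{equation*}
 e_{11}\oo{\lambda}\varphi_\mu(e_{11},e_{1n}) \;=\; \varphi_{\lambda+\mu}(e_{11},e_{1n})-\varphi_\lambda(e_{11},e_{1n}),
\end{equation*}
and specializing $\mu=0$ only yields $e_{11}\oo{\lambda}\varphi_0(e_{11},e_{1n})=0$, which by itself forces nothing about $\varphi_\lambda(e_{11},e_{1n})$. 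The specialization that actually works is the ``conjugate'' one $\mu\mapsto-\partial-\lambda$, i.e.\ the $\{\cdot\}$ device and relation \eqref{eq:RightMul_relation}, which your argument never invokes. Finally, your two-step plan (first kill $(e_{1n},e_{ij})$, then $(e_{11},e_{1n})$) requires verifying that the second correction satisfies $w'\oo{\lambda}e_{ij}=0$; this does follow from the cocycle identity once \eqref{eq:EELambda-1} is in force, but you do not establish it, whereas the paper sidesteps the issue by achieving both normalizations with a single, explicitly computed $\tau$. In short: right architecture, but the two computations that constitute the proof are either absent or written down incorrectly.
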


\begin{proof}
Without loss of generality we may assume that $\varphi$ satisfies \eqref{eq:CC-Cend0}.
Denote $e=e_{11} + \dots + e_{n-1\, n-1} \in C_0\subset C$, 
where $C_0$ stands for the same subalgebra as in the proof of Lemma~\ref{lem:L0}.
Define $\tau \in \mathcal C^1(C,M)$
in such a way that $\tau(e_{1n}) = \varphi_0\{e_{11}, e_{1n}\} - \{e_{11}\oo{0} \varphi_0 \{e_{1n},e \}\}$ 
and $\tau(u) = 0$
for other generators $u$ of $C$ as of $H$-module.
Then, in particular, $\tau(C_0)=0$, so $(d_1\tau)_\lambda (C_0,C_0)=0$. 
Let us compute
\begin{multline}\nonumber
 (d_1\tau)_\lambda (e_{1n}, e_{ij}) 
= \varphi_\lambda (e_{11}, e_{1n})\oo\lambda e_{ij} - e_{11} \oo{\lambda } (\varphi_0(e_{1n}, e)\oo{0} e_{ij}) \\
= \varphi_\lambda(e_{11}, e_{1n}\oo{0} e_{ij}) + e_{11}\oo{\lambda } \varphi_0(e_{1n}, e_{ij}) - \varphi_\lambda (e_{11}\oo\lambda e_{1n}, e_{ij}) \\
- e_{11}\oo\lambda (\varphi_0(e_{1n}, e\oo{0} e_{ij}) + e_{1n}\oo{0} \varphi_0(e,e_{ij}) - \varphi_0(e_{1n}\oo{0} e, e_{ij})) \\
= -\varphi_\lambda (e_{1n}, e_{ij})
\end{multline}
for $i,j=1,\dots, n-1$, 
and
\begin{multline}\nonumber
(d_1\tau)_\lambda (e_{11}, e_{1n}) 
= -\tau (e_{1n}) + e_{11}\oo\lambda \tau(e_{1n})  \\
= -\varphi_0\{e_{11}, e_{1n}\} + \{e_{11}\oo{0} \varphi_0\{e_{1n}, e\}\} 
 + e_{11}\oo\lambda \varphi_0\{e_{11}, e_{1n}\} \\
- \{ (e_{11}\oo{\lambda } e_{11})\oo{0} \varphi_0\{e_{1n}, e\}\} 
= -\varphi_0\{e_{11}, e_{1n}\} + \varphi_0\{ e_{11}\oo\lambda e_{11}, e_{1n}\} \\
+ \{\varphi_\lambda (e_{11}, e_{11}) \oo{0} e_{1n}\}
- \varphi_\lambda (e_{11}, \{e_{11}\oo{0} e_{1n} \}) 
= -\varphi_\lambda (e_{11}, e_{1n}).
\end{multline}
Therefore, 
$\varphi' = \varphi + d_1\tau $ satisfies 
\eqref{eq:CC-Cend0}, \eqref{eq:EELambda-1}, and \eqref{eq:EELambda-2}.
\end{proof}

\begin{lemma}\label{lem:L2}
 For every 2-cocycle $\varphi \in \mathcal Z^2(C,M)$ there exists 
 $\varphi'\in \mathcal Z^2(C,M)$ such that 
 $\varphi - \varphi' \in \mathcal B^2(C,M)$, 
 \eqref{eq:CC-Cend0}, \eqref{eq:EELambda-1}, \eqref{eq:EELambda-2} hold,
 and
\begin{gather}
 \varphi'_\lambda (x_{n1}, e_{11}) = 0,  \label{eq:XELambda-1} \\
    \varphi'_\lambda( e_{ij}, x_{n1}) =0, \quad i,j=1,\dots, n-1.         \label{eq:XELambda-2}
\end{gather}
\end{lemma}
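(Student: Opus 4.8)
The plan is to remove the two remaining ``mixed'' families of values of $\varphi$ by subtracting a coboundary $d_1\tau$. By Proposition~\ref{prop:DefRelCend_nQ} the $S$-reduced words constitute an $H$-basis of $C$, so an $H$-linear map $\tau\colon C\to M$ is freely prescribed on the basis elements $e_{ij}$ ($i\le n-1$), $x_{kl}$, and the further basis words $x_{k1}\oo0 x_{11}\oo0\dots\oo0 x_{1l}$ listed there. Invoking Lemma~\ref{lem:L1} I may assume $\varphi$ already satisfies \eqref{eq:CC-Cend0}, \eqref{eq:EELambda-1}, \eqref{eq:EELambda-2}; then I take $\tau$ to vanish on all the listed basis elements except $x_{n1}$, where
\[
 \tau(x_{n1}) = \varphi_0(x_{n1},e_{11}) - \varphi_0(e,x_{n1})\oo0 e_{11},\qquad e = e_{11}+\dots+e_{n-1\,n-1}\in C_0,
\]
and set $\varphi' = \varphi + d_1\tau$, so $\varphi-\varphi' = -d_1\tau\in\mathcal B^2(C,M)$ and $\varphi'\in\mathcal Z^2(C,M)$. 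That $d_1\tau$ preserves \eqref{eq:CC-Cend0}, \eqref{eq:EELambda-1}, \eqref{eq:EELambda-2} is immediate: $\tau$ kills $C_0$ and $e_{1n},e_{11}$, the products $u_{ij}\oo\lambda v_{kl}$ ($u,v\in\{e,x\}$, $i,j,k,l\le n-1$) stay in $C_0$ by \eqref{eq:DefRel1}--\eqref{eq:DefRel6}, and $e_{1n}\oo\lambda e_{ij}=0$, $e_{11}\oo\lambda e_{1n}=e_{1n}$ never involve $x_{n1}$; hence $(d_1\tau)_\lambda$ vanishes on every pair occurring in those identities.

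The core is to check $(d_1\tau)_\lambda(x_{n1},e_{11}) = -\varphi_\lambda(x_{n1},e_{11})$ and $(d_1\tau)_\lambda(e_{ij},x_{n1}) = -\varphi_\lambda(e_{ij},x_{n1})$ for $i,j\le n-1$. Since $\tau(e_{11})=\tau(e_{ij})=0$, $x_{n1}\oo\lambda e_{11}=x_{n1}$ and $e_{ij}\oo\lambda x_{n1}=0$, the differential \eqref{eq:differential_pseudo} collapses to
\[
 (d_1\tau)_\lambda(x_{n1},e_{11}) = -\tau(x_{n1})+\tau(x_{n1})\oo\lambda e_{11},\qquad (d_1\tau)_\lambda(e_{ij},x_{n1}) = e_{ij}\oo\lambda\tau(x_{n1}).
\]
Specialising the cocycle identity \eqref{eq:Cocycle-lambda} at the triples $(x_{n1},e_{11},e_{11})$, $(e_{ij},e,x_{n1})$, $(e_{ij},x_{n1},e_{11})$ and using \eqref{eq:CC-Cend0} together with $x_{n1}\oo\lambda e_{11}=x_{n1}$, $e_{ij}\oo\lambda e=e_{ij}$, $e\oo\lambda x_{n1}=0$, $e_{ij}\oo\lambda x_{n1}=0$, one obtains
\[
 \varphi_\lambda(x_{n1},e_{11}) = \varphi_0(x_{n1},e_{11}) - \varphi_0(x_{n1},e_{11})\oo\lambda e_{11},\qquad \varphi_\lambda(e_{ij},x_{n1}) = e_{ij}\oo\lambda\varphi_0(e,x_{n1}),
\]
\[
 e_{ij}\oo\lambda\varphi_0(x_{n1},e_{11}) = -\varphi_\lambda(e_{ij},x_{n1}) + \varphi_\lambda(e_{ij},x_{n1})\oo\lambda e_{11}.
\]
Using in addition the bimodule associativity in the forms $a\oo\lambda(m\oo0 b) = (a\oo\lambda m)\oo\lambda b$ and $(m\oo0 a)\oo\lambda b = m\oo0(a\oo\lambda b)$, the vector $\varphi_0(e,x_{n1})\oo0 e_{11}$ is seen to be fixed by $\oo\lambda e_{11}$, so the second summand of $\tau(x_{n1})$ drops out of the first computation, which then reduces via the first displayed identity to $-\varphi_\lambda(x_{n1},e_{11})$; in the second computation the first summand of $\tau(x_{n1})$ contributes $-\varphi_\lambda(e_{ij},x_{n1}) + \varphi_\lambda(e_{ij},x_{n1})\oo\lambda e_{11}$ and the second summand contributes exactly $-\varphi_\lambda(e_{ij},x_{n1})\oo\lambda e_{11}$, summing to $-\varphi_\lambda(e_{ij},x_{n1})$.

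The main obstacle, as the above indicates, is the design of $\tau(x_{n1})$. A single-term correction (either $\varphi_0(x_{n1},e_{11})$ or $-\varphi_0(e,x_{n1})$) normalises only one of the two families and leaves a residue of the shape $\varphi_\lambda(e_{ij},x_{n1})\oo\lambda e_{11}$; the key point is to spot the additional term $-\varphi_0(e,x_{n1})\oo0 e_{11}$, which is simultaneously invisible to the $(x_{n1},e_{11})$-equation (being a right-$e_{11}$-fixed vector) and, after one application of associativity, equal to $-\varphi_\lambda(e_{ij},x_{n1})\oo\lambda e_{11}$ against every $e_{ij}$. Everything else is routine bookkeeping with \eqref{eq:differential_pseudo} and \eqref{eq:Cocycle-lambda}.
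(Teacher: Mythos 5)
Your proposal is correct and follows essentially the same route as the paper: the same correcting cochain $\tau(x_{n1})=\varphi_0(x_{n1},e_{11})-\varphi_0(e,x_{n1})\oo{0}e_{11}$ (vanishing on the other $H$-generators), the same check that $d_1\tau$ preserves \eqref{eq:CC-Cend0}, \eqref{eq:EELambda-1}, \eqref{eq:EELambda-2}, and the same specializations of the cocycle identity to evaluate $(d_1\tau)_\lambda(x_{n1},e_{11})$ and $(d_1\tau)_\lambda(e_{ij},x_{n1})$. The only difference is presentational: you isolate as separate displayed identities what the paper folds into a single chain of equalities.
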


\begin{proof}
Define a 1-cochain $\tau \in \mathcal C^1(C,M)$ in such a way that 
\[
 \tau (x_{n1}) = \varphi_0(x_{n1}, e_{11}) -\varphi_0(e,x_{n1})\oo{0} e_{11}
\]
and $\tau(u)=0$ for other generators of $C$ as of $H$-module.
Then $(d_1\tau)_\lambda (C_0,C_0)= 0 $ and $(d_1\tau)_\lambda (e_{1n},C_0) = (d_1\tau)_\lambda (C_0,e_{1n}) = 0$.
Moreover, 
\begin{multline}\nonumber
 (d_1\tau)_\lambda (x_{n1}, e_{11})
 = \tau(x_{n1})\oo{\lambda} e_{11} - \tau (x_{n1})
 =\varphi_0(x_{n1}, e_{11})\oo{\lambda } e_{11} \\
 - \varphi_0(e,x_{n1})\oo{0}e_{11}\oo\lambda e_{11} 
 -\varphi_0(x_{n1}, e_{11}) + \varphi_0(e,x_{n1})\oo{0} e_{11} = \varphi_0(x_{n1}, e_{11}\oo{\lambda } e_{11}) \\
 + x_{n1}\oo{0}\varphi_\lambda (e_{11}, e_{11}) 
 -\varphi_\lambda (x_{n1}\oo{0} e_{11}, e_{11}) -\varphi_0(x_{n1}, e_{11})
 =-\varphi_\lambda (x_{n1}, e_{11})
\end{multline}
and 
\begin{multline}\nonumber
 (d_1\tau)_\lambda (e_{ij}, x_{n1}) = e_{ij}\oo{\lambda } \tau(x_{n1}) \\
 =e_{ij}\oo{\lambda } \varphi_0(x_{n1}, e_{11}) - (e_{ij}\oo{\lambda } \varphi_0(e,x_{n1})) \oo{\lambda } e_{11}
 =\varphi_\lambda (e_{ij}\oo\lambda x_{n1}, e_{11}) \\
 + \varphi_\lambda (e_{ij}, x_{n1})\oo\lambda e_{11}- \varphi_\lambda (e_{ij}, x_{n1}\oo{0} e_{11})
  - ( \varphi_\lambda (e_{ij}\oo\lambda e, x_{n1}) \\
  + \varphi_\lambda (e_{ij}, e)\oo\lambda x_{n1} 
    -\varphi_\lambda (e_{ij}, e\oo{0} x_{n1})  )\oo{\lambda } e_{11}
 = -\varphi_\lambda (e_{ij}, x_{n1}).
\end{multline}
Therefore, 
$\varphi' = \varphi + d_1\tau $ is the desired cocycle.
\end{proof}

\begin{lemma}\label{lem:L3}
 For every 2-cocycle $\varphi \in \mathcal Z^2(C,M)$ there exists 
 $\varphi'\in \mathcal Z^2(C,M)$ such that 
 $\varphi - \varphi' \in \mathcal B^2(C,M)$, 
 \eqref{eq:CC-Cend0}, \eqref{eq:EELambda-1}, \eqref{eq:EELambda-2}, 
 \eqref{eq:XELambda-1}, \eqref{eq:XELambda-2} hold,
 and
 \begin{equation}\label{eq:EXn1=0}
  \varphi'_\lambda (e_{1n}, x_{n1}) = \varphi'_0(e_{1n}, x_{n1}).
 \end{equation}
\end{lemma}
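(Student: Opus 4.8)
The plan is to adjust $\varphi$ by a coboundary $d_1\tau$, where $\tau\in\mathcal C^1(C,M)$ is supported on the single $H$-module generator $e_{1n}$: set $\tau(e_{1n})=w$ for a suitable $w\in M$ and $\tau(u)=0$ for every other $H$-module generator $u$ of $C$ (in particular $\tau(C_0)=0$). As in Lemmas~\ref{lem:L1} and~\ref{lem:L2}, once the correct $w$ is found everything reduces to a routine computation with the cocycle identity \eqref{eq:Cocycle-lambda}, the relations \eqref{eq:DefRel2-1}--\eqref{eq:DefRel2-6}, and the associativity of the bimodule actions.

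First I would record a structural fact. Feeding \eqref{eq:Cocycle-lambda} the triples $(e_{11},e_{1n},x_{n1})$ and $(e_{1n},x_{n1},e_{11})$, using $e_{11}\oo{\lambda}e_{1n}=e_{1n}$, $x_{n1}\oo{\mu}e_{11}=x_{n1}$, $e_{1n}\oo{\mu}x_{n1}=x_{11}+\mu e_{11}$, and the vanishings \eqref{eq:CC-Cend0}, \eqref{eq:EELambda-2}, \eqref{eq:XELambda-1} already achieved, one obtains
\[
 \varphi_\lambda(e_{1n},x_{n1})=e_{11}\oo{\lambda}\varphi_0(e_{1n},x_{n1}),\qquad
 \varphi_\lambda(e_{1n},x_{n1})\oo{\nu}e_{11}=\varphi_\lambda(e_{1n},x_{n1})\quad(\forall\nu).
\]
Thus the whole $\lambda$-dependence of $\varphi_\lambda(e_{1n},x_{n1})$ is the left action of the conformal idempotent $e_{11}$, and \eqref{eq:EXn1=0} for $\varphi'$ is equivalent to $e_{11}\oo{k}\varphi'_0(e_{1n},x_{n1})=0$ for all $k\ge 1$.

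Next, preservation of the earlier normalizations. Since $C_0$ is a subalgebra and $\tau|_{C_0}=0$, the coboundary $d_1\tau$ vanishes on $C_0\otimes C_0$, so \eqref{eq:CC-Cend0} survives. Evaluating $(d_1\tau)_\lambda$ on the remaining normalized pairs and using $e_{1n}\oo{\lambda}e_{ij}=0$ and $e_{ij}\oo{\lambda}x_{n1}=0$ (relations \eqref{eq:DefRel2-2}, \eqref{eq:DefRel2-4}), $x_{n1}\oo{\lambda}e_{11}=x_{n1}$, $e_{11}\oo{\lambda}e_{1n}=e_{1n}$, one finds that \eqref{eq:EELambda-1}, \eqref{eq:EELambda-2}, \eqref{eq:XELambda-1}, \eqref{eq:XELambda-2} are all preserved as soon as
\[
 e_{11}\oo{\lambda}w=w,\qquad w\oo{\lambda}e_{ij}=0\quad(1\le i,j\le n-1),
\]
while $(d_1\tau)_\lambda(e_{1n},x_{n1})=w\oo{\lambda}x_{n1}$. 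Combining this with the structural fact and the mixed associativity $e_{11}\oo{k}(w\oo{0}x_{n1})=w\oo{k}x_{n1}$ (which holds once $e_{11}\oo{\lambda}w=w$), the lemma is reduced to producing $w\in M$ with
\[
 e_{11}\oo{\lambda}w=w,\qquad w\oo{\lambda}e_{ij}=0\ (i,j\le n-1),\qquad w\oo{k}x_{n1}=-\varphi_k(e_{1n},x_{n1})\ (k\ge 1).
\]

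The construction of such a $w$ is the main obstacle: it must be a left $e_{11}$-fixed point that right-annihilates the $(n-1)\times(n-1)$ block of $C_0$, yet reproduce through right multiplication by $x_{n1}$ the higher-$\lambda$ tail of the left $e_{11}$-action on $\varphi_0(e_{1n},x_{n1})$. I would build $w$ explicitly out of $\varphi_0(e_{1n},x_{n1})$ under the module operations with $e_{11}$, $e_{1n}$, $x_{n1}$ and the off-block elements $x_{1n},x_{nn}$, and verify the three required identities by again applying \eqref{eq:Cocycle-lambda} to triples assembled from $e_{1n}$, $x_{n1}$, $e_{11}$ together with \eqref{eq:DefRel2-1}--\eqref{eq:DefRel2-6}; this is exactly the step where $n\ge 2$ is essential, since it is the presence of the idempotent $e_{11}$ inside $C_0\cong\Cend_{n-1}$ alongside the off-block generators $e_{1n},x_{n1}$ that makes such a $w$ available. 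By Remark~\ref{rem:LeftRightCend} one may equally well place $\tau$ on $x_{n1}$ and run the left–right mirror of this argument.
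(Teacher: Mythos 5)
Your reduction is sound as far as it goes: the two structural identities $\varphi_\lambda(e_{1n},x_{n1})=e_{11}\oo{\lambda}\varphi_0(e_{1n},x_{n1})$ and $\varphi_\lambda(e_{1n},x_{n1})\oo{\nu}e_{11}=\varphi_\lambda(e_{1n},x_{n1})$ do follow from \eqref{eq:Cocycle-lambda} applied to $(e_{11},e_{1n},x_{n1})$ and $(e_{1n},x_{n1},e_{11})$ under the earlier normalizations (the paper derives exactly these two facts inside its own proof), and your bookkeeping of which conditions on $w=\tau(e_{1n})$ preserve \eqref{eq:CC-Cend0}--\eqref{eq:XELambda-2} is correct. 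But the proof stops precisely where the lemma begins. You reduce everything to producing $w\in M$ with $e_{11}\oo{\lambda}w=w$, $w\oo{\lambda}e_{ij}=0$, and $w\oo{k}x_{n1}=-\varphi_k(e_{1n},x_{n1})$ for $k\ge 1$, you call this ``the main obstacle,'' and you then only announce that you ``would build $w$ explicitly.'' No candidate is written down and no existence argument is given. Worse, your own structural analysis shows the task is rigid: if $e_{11}\oo{\lambda}w=w$ then $w\oo{\lambda}x_{n1}=e_{11}\oo{\lambda}(w\oo{0}x_{n1})$ is completely determined by $w\oo{0}x_{n1}$, so you are asking for an element $u=w\oo{0}x_{n1}$ with $e_{11}\oo{k}\bigl(u+\varphi_0(e_{1n},x_{n1})\bigr)=0$ for all $k\ge 1$ --- which is essentially a restatement of the conclusion \eqref{eq:EXn1=0} itself. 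The reduction is therefore circular rather than progress, and the proposal is not a proof.

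For comparison, the paper avoids the need for a single closed-form corrector altogether. It takes $\varphi$ in the equivalence class with $m=\deg_\lambda\varphi_\lambda(e_{1n},x_{n1})$ minimal and, for $m>0$, sets $\tau(x_{n1})=\tfrac1m\,x_{n1}\oo{0}\varphi_1(e_{1n},x_{n1})$ (support on $x_{n1}$, not on $e_{1n}$). Then $(d_1\tau)_\lambda(e_{1n},x_{n1})=e_{1n}\oo{\lambda}\tau(x_{n1})=\tfrac1m(x_{11}+\lambda e_{11})\oo{\lambda}\varphi_1(e_{1n},x_{n1})$, and the explicit factor $\lambda$ coming from $e_{1n}\oo{\lambda}x_{n1}=x_{11}+\lambda e_{11}$ is exactly what reproduces the leading term $\lambda^{(m)}u_m$ of $\varphi_\lambda(e_{1n},x_{n1})$; subtracting strictly lowers the degree and contradicts minimality. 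Your choice of supporting $\tau$ on $e_{1n}$ loses this degree-raising mechanism, which is why no natural one-step $w$ presents itself. If you want to salvage your approach, switch the support to $x_{n1}$ and run the paper's induction on $\deg_\lambda$ instead of seeking a single $w$.
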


\begin{proof}
Given a 2-cocycle $\varphi \in \mathcal Z^2(C,M)$,
denote by $S$ the set of all $\varphi'\in \mathcal Z^2(C,M)$
such that $\varphi - \varphi' \in \mathcal B^2(C,M)$
and $\varphi'$ 
satisfies 
 \eqref{eq:CC-Cend0}, \eqref{eq:EELambda-1}, \eqref{eq:EELambda-2}, \eqref{eq:XELambda-1}, \eqref{eq:XELambda-2}.
Lemmas \ref{lem:L0}, \ref{lem:L1}, \ref{lem:L2} imply $S \ne \varnothing $.
Without loss of generality, we may 
assume $\varphi \in S$ and 
$m=\deg_\lambda \varphi_{\lambda}(e_{1n}, x_{n1})$ is minimal among all 
$\deg_\lambda \varphi'_{\lambda}(e_{1n}, x_{n1})$,
$\varphi'\in S$.
If $m=0$ then there is nothing to prove. 
If $m>0$ then define 
 $\tau(x_{n1}) = \frac{1}{m}x_{n1}\oo{0}\varphi_1(e_{1n}, x_{n1})$ and $\tau(u) = 0$
 for other $H$-linear generators $u$ of $C$.  
Let us compute  
\begin{multline}\nonumber
 \varphi_\mu(e_{1n}, x_{n1})\oo{\lambda } e_{11} 
  = \varphi_\mu (e_{1n}, x_{n1}\oo{\lambda -\mu } e_{11})   \\
   + e_{1n}\oo{\mu } \varphi_{\lambda -\mu}(x_{n1},e_{11})
   -\varphi_\lambda (e_{1n}\oo{\mu} x_{n1}, e_{11})
 =\varphi_{\mu}(e_{1n}, x_{n1})
\end{multline}
due to the choice of $S$.
Hence,
\begin{multline}\nonumber
 (d_1\tau )_\lambda (x_{n1}, e_{11}) 
  = \tau(x_{n1})\oo{\lambda } e_{11} - \tau(x_{n1}) \\
  =\frac{1}{m} x_{n1}\oo{0} ( \varphi_1(e_{1n},x_{n1})\oo\lambda e_{11} -  \varphi_1(e_{1n},x_{n1}))
  =0,
\end{multline}
\begin{multline}\nonumber
(d_1\tau )_\lambda (e_{ij}, x_{n1}) = e_{ij}\oo{\lambda } \tau(x_{n1})
= e_{ij}\oo{\lambda } \frac{1}{m} x_{n1}\oo{0} \varphi_1(e_{1n}, x_{n1}) \\
=\frac{1}{m} (e_{ij}\oo{\lambda } x_{n1})\oo{\lambda } \varphi_1(e_{1n}, x_{n1}) 
= 0.
\end{multline}
Therefore, the conditions of Lemmas \ref{lem:L1} and \ref{lem:L2} hold for $d_1\tau $,
so the cocycle $\varphi -d_1\tau $ belongs to the set $S$. 
Moreover, 
\begin{multline}\nonumber
(d_1\tau )_\lambda (e_{1n}, x_{n1})  = e_{1n}\oo{\lambda } \tau(x_{n1}) \\
 = \frac{1}{m} e_{1n}\oo{\lambda } (x_{n1}\oo{\lambda } \varphi_1(e_{1n}, x_{n1})) 
 = \frac{1}{m} (x_{11} +\lambda e_{11})\oo{\lambda } \varphi_1(e_{1n}, x_{n1}) \\ 
 = \frac{1}{m} (x\oo{0} e_{11}\oo{\lambda } \varphi_1(e_{1n}, x_{n1}) 
    + \lambda e_{11}\oo{\lambda } \varphi_1(e_{1n}, x_{n1}))
\end{multline}
has the same principal term (with respect to $\lambda $) as $\varphi_\lambda (e_{1n}, x_{n1})$.
Indeed, let us evaluate 
\begin{multline}\nonumber
 e_{11}\oo{\lambda } \varphi_\mu (e_{1n}, x_{n1}) 
  = \varphi_{\lambda+\mu}(e_{11}\oo\lambda e_{1n}, x_{n1})  \\
   + \varphi_\lambda (e_{11}, e_{1n})\oo{\lambda +\mu } x_{n1}
   - \varphi_\lambda (e_{11}, e_{1n}\oo\mu x_{n1}) 
   = \varphi_{\lambda +\mu}(e_{1n}, x_{n1})
   \in M[\lambda , \mu ].
\end{multline}
The coefficient at $\mu $ of the latter polynomial is equal 
$e_{11}\oo{\lambda } \varphi_1(e_{1n}, x_{n1})$.
If $\varphi_\lambda(e_{1n}, x_{n1}) = \lambda^{(m)}u_m + \lambda ^{(m-1)}u_{m-1} + \dots $
then 
\[
 e_{11}\oo{\lambda } \varphi_1(e_{1n}, x_{n1}) = \lambda ^{(m-1)}u_m +\lambda ^{(m-2)}u_{m-1} +\dots .
\]
Finally, 
\begin{multline}\nonumber
 (d_1\tau )_\lambda (e_{1n}, x_{n1}) 
 = \frac{1}{m}\lambda^{(m-1)} x\oo{0}u_m +\frac{1}{m}\lambda^{(m-2)} x\oo{0}u_{m-1}
 + \dots \\
 + \frac{1}{m}\lambda \lambda^{(m-1)} u_m + \frac{1}{m}\lambda \lambda^{(m-2)} u_{m-1} +\dots 
 = \lambda^{(m)}u_m + \dots ,
\end{multline}
and for $\varphi' = \varphi - d_1\tau \in S$ we have 
\[
\deg_\lambda \varphi'_\lambda (e_{1n}, x_{n1}) < m = \deg_\lambda \varphi_\lambda (e_{1n}, x_{n1})
\]
in contradiction to the choice of $\varphi $.
\end{proof}

\begin{theorem}\label{thm:H2Cend_nQ}
 Let $C=\Cend_{n,Q}$, $Q=\diag(1,\dots, 1, x)$, $n> 1$, 
 and let $M$ be an arbitrary conformal bimodule over $C$. 
 Then $\mathcal H^2(C,M)=0$.
\end{theorem}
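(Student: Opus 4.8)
The plan is to take an arbitrary $2$-cocycle $\varphi\in\mathcal Z^2(C,M)$ and, using Lemmas~\ref{lem:L0}--\ref{lem:L3}, replace it by a cohomologous cocycle (still called $\varphi$) which vanishes on all pairs of the generators $e_{ij},e_{1n},x_{n1}$ from Corollary~\ref{cor:DefRelCend_nQ} except possibly on $(e_{1n},x_{n1})$, where by Lemma~\ref{lem:L3} we may assume $\varphi_\lambda(e_{1n},x_{n1})=\varphi_0(e_{1n},x_{n1})$ is independent of $\lambda$. Write $w=\varphi_0(e_{1n},x_{n1})\in M$. The remaining task is to show that this $w$ can itself be killed by a further coboundary, i.e.\ that $w\in(d_1\tau)$ for a suitable $\tau$, after which $\varphi$ is a coboundary and the theorem follows.

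First I would extract the constraints on $w$ from the cocycle identity \eqref{eq:Cocycle-lambda}. Plugging in various triples of generators and using the relations \eqref{eq:DefRel2-1}--\eqref{eq:DefRel2-6} together with the already-established vanishing \eqref{eq:CC-Cend0}, \eqref{eq:EELambda-1}, \eqref{eq:EELambda-2}, \eqref{eq:XELambda-1}, \eqref{eq:XELambda-2}, one gets identities relating $w$ to the module action. For instance, the triple $(e_{11},e_{1n},x_{n1})$ together with $e_{11}\oo{\lambda}e_{1n}=e_{1n}$ and $e_{1n}\oo{1}x_{n1}=e_{11}$, $e_{1n}\oo{m}x_{n1}=0$ for $m>1$ should give that $e_{11}\oo{\lambda}w$ is essentially $w$ (living in the $e_{11}$-isotypic part), and the triple $(e_{1n},x_{n1},e_{11})$ should give the corresponding right-module statement $w\oo{\lambda}e_{11}=w$. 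The triple $(e_{1n},x_{n1},e_{1j})$ or $(e_{1j},e_{1n},x_{n1})$-type substitutions, using that $x_{n1}$ and $e_{1n}$ generate the ``off-diagonal'' part, should pin down how $w$ interacts with the rest of $C_0$. The key point I expect is that these constraints force $w$ to lie in a small, explicitly describable $H$-submodule of $M$ on which $C$ acts like $\Cend_{1,x}$ or a current-type piece, so that triviality can be read off by hand.

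The decisive step is the construction of $\tau\in\mathcal C^1(C,M)$ with $(d_1\tau)_\lambda(e_{1n},x_{n1})=w$ while $d_1\tau$ preserves all the earlier normalizations (vanishing on the $C_0$-pairs and on the mixed pairs involving $e_{1n},x_{n1}$ one-sidedly). The natural guess is to set $\tau$ nonzero only on one generator --- most plausibly $\tau(e_{1n})$ or $\tau(x_{n1})$ --- chosen so that the module-action identities derived above convert it into $w$ on the pair $(e_{1n},x_{n1})$ and into $0$ on every other generator pair. One would then verify, exactly as in the proofs of Lemmas~\ref{lem:L1}--\ref{lem:L3} (expanding $(d_1\tau)_\lambda$ via \eqref{eq:differential_pseudo} and simplifying with conformal associativity and \eqref{eq:RightMul_relation}), that $d_1\tau$ has no effect on the already-normalized values. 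Here the hypothesis $n\ge 2$ enters: one has genuine idempotents $e_{11},\dots,e_{n-1\,n-1}$ and the element $e=e_{11}+\dots+e_{n-1\,n-1}$ available to ``rotate'' $w$ into place, which is why this argument --- unlike the $n=1$ case treated in \cite{Kozlov2017} --- works.

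The main obstacle I anticipate is precisely verifying that the single correcting cochain $\tau$ does the whole job at once: one must check that $(d_1\tau)_\lambda$ vanishes on \emph{every} pair of generators listed in Corollary~\ref{cor:DefRelCend_nQ} other than $(e_{1n},x_{n1})$, and this requires using the full strength of relations \eqref{eq:DefRel2-1}--\eqref{eq:DefRel2-6} plus the constraints on $w$ to cancel a number of module-action terms. A secondary subtlety is ensuring that after normalizing on the generating set one may conclude $\varphi\equiv 0$ on all of $C$: since $C$ is generated by $X'$ and $\varphi$ satisfies the cocycle identity, vanishing on generators propagates via \eqref{eq:Cocycle-lambda} to vanishing on all products, so $\varphi=0$ identically; this propagation step should be spelled out but is routine given the Gröbner--Shirshov description of $C$ in Proposition~\ref{prop:DefRelCend_nQ}. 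Once $w$ is absorbed, $\varphi\in\mathcal B^2(C,M)$, hence $\mathcal H^2(C,M)=0$.
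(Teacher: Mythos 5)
Your normalization phase is the paper's: Lemmas~\ref{lem:L0}--\ref{lem:L3} are used in exactly the same way. But the endgame contains a genuine gap, and it rests on a misreading of what remains to be done. After Lemma~\ref{lem:L3} the residual value $w=\varphi'_0(e_{1n},x_{n1})$ does \emph{not} need to be killed, and the paper never kills it. The whole point of Corollary~\ref{cor:DefRelCend_nQ} is that the defining relations \eqref{eq:DefRel2-1}--\eqref{eq:DefRel2-6} constrain $e_{1n}\oo{m}x_{n1}$ only for $m\ge 1$; the $0$-th product (which equals $x_{11}$ in $C$) is left free by the presentation. Consequently, the normalizations \eqref{eq:CC-Cend0}, \eqref{eq:EELambda-1}--\eqref{eq:XELambda-2} together with the $\lambda$-independence \eqref{eq:EXn1=0} already guarantee that the pre-images $a+0$, $a\in X'$, satisfy \emph{all} defining relations inside the extension $E=C\dot+M$: the $0$-th product of $e_{1n}$ and $x_{n1}$ computed in $E$ is $x_{11}+w$, which simply plays the role of $x_{11}$. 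The presentation then yields a homomorphism $\rho:C\to E$ sending $a\in X'$ to $a+0$; it is injective because it is a section of $E\to C$, so the extension splits and $\varphi'\in\mathcal B^2(C,M)$. This use of the generators-and-relations description is the missing idea in your write-up.

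By contrast, the step you propose instead --- construct a further $\tau$ with $(d_1\tau)_\lambda(e_{1n},x_{n1})=w$ while $d_1\tau$ vanishes on all other generator pairs, then propagate vanishing from generators to all of $C$ --- is precisely the part you flag as the ``main obstacle,'' and it is not carried out. The cocycle identity only forces conditions like $e_{11}\oo{\lambda}w=w$ and $w\oo{\lambda}e_{11}=w$; it does not force $w=0$, so such a $\tau$ genuinely has to be produced. Your ``natural guess'' of supporting $\tau$ on $e_{1n}$ or $x_{n1}$ is unlikely to work: with $\tau$ supported on $x_{n1}$, say, one would need $e_{1n}\oo{\lambda}\tau(x_{n1})=w$ identically in $\lambda$ while simultaneously $e_{ij}\oo{\lambda}\tau(x_{n1})=0$ and $\tau(x_{n1})\oo{\lambda}e_{11}=\tau(x_{n1})$, and there is no reason such an element of $M$ exists. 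A cochain that does the job would have to be supported on $x_{11}\in C_0$ (roughly $\psi(x_{11})=-w$), but that destroys the normalization \eqref{eq:CC-Cend0} on pairs involving $x_{11}$ unless one redoes a large part of the analysis. Finally, the claim that vanishing on generator pairs propagates to all of $C$ is itself not routine: the identity \eqref{eq:Cocycle-lambda} expresses $\varphi_\lambda(a,b\oo{\mu}c)$ through $\varphi_{\lambda+\mu}(a\oo{\lambda}b,c)$, whose first argument is longer, so a careful induction on normal forms would be required. All of these difficulties evaporate once one argues via the presentation as the paper does.
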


\begin{proof}
Suppose $\varphi \in \mathcal Z^2(C,M)$. 
By Lemmas \ref{lem:L0}--\ref{lem:L3}, there exists $\varphi '\in \mathcal Z^2(C,M)$
such that $\varphi - \varphi'\in \mathcal B^2(C,M)$ and 
\eqref{eq:CC-Cend0}, \eqref{eq:EELambda-1}, \eqref{eq:EELambda-2}, 
 \eqref{eq:XELambda-1}, \eqref{eq:XELambda-2}, \eqref{eq:EXn1=0} hold.

Consider an extension $E$ defined by $\varphi' $ as in Theorem~\ref{thm:ExtCocycle}:
\[
 0\to M\to E\to C\to 0.
\]
Then $E=C \dot+ M$, and the pre-images of the elements of $X'$ from 
Corollary~\ref{cor:DefRelCend_nQ} satisfy defining relations 
\eqref{eq:DefRel2-1}--\eqref{eq:DefRel2-6}. Therefore, there exists 
a homomorphism $\rho : C\to E$ which maps $a\in X'$ to $a+0\in C\dot+ M=E$.
The map $\rho $ is injective since it has a left inverse $E\to C$ in the exact sequence above. 
Hence, the subalgebra of $E$
generated by pre-images of $X'$ is isomorphic to~$C$, the extension $E$ is split,
and $\varphi'\in \mathcal B^2(C,M)$. The latter implies $\varphi \in \mathcal B^2(C,M)$.  
\end{proof}

\begin{remark}
For $n=1$, the proof stated above does not work since there is no idempotent 
$e_{11}\in C$. 
However, it was proved in \cite{Kozlov2017} that 
$\mathcal H^2(\Cend_{1,x}, M)=0$ for every conformal bimodule $M$
over $\Cend_{1,x}$. As a corollary, Theorem~\ref{thm:H2Cend_nQ} holds also for $n=1$.
\end{remark}

\section{Non-split extensions with finite faithful representation}

In this section, we state a series of examples 
of non-split null extensions of semisimple associative conformal algebras with a FFR.

\begin{example}\label{exm:Cend+Cend}
Let $C=x\Bbbk[\partial , x]\oplus y\Bbbk[\partial ,y]\simeq \Cend_{1,x}\oplus \Cend_{1,x}$.
Consider $M=z^2\Bbbk [\partial , z]$ as a conformal bimodule over $C$ relative to 
\[
\begin{gathered}
xf(\partial , x)_{(\lambda )} z^2g(\partial , z) 
 = z^2(z+\lambda )f(-\lambda , z)g(\partial +\lambda , z+\lambda ), \\
z^2g(\partial , z)_{(\lambda )} y f(\partial , y) 
 = z^2(z+\lambda )g(-\lambda , z)f(\partial +\lambda , z+\lambda ), \\
yf(\partial , y)_{(\lambda )} z^2g(\partial , z) =0, \\
z^2g(\partial , z)_{(\lambda )} x f(\partial , x) = 0.
\end{gathered}
\]
\end{example}
Define a 2-cochain $\varphi \in C^2(C,M) $ in the following way:
\[
\begin{gathered}
\varphi_\lambda  (xf(\partial , x), y g(\partial , y) ) =  z^2f(-\lambda ,z)g(\partial +\lambda , z+\lambda ),\\
\varphi_\lambda  (yf(\partial , x), x g(\partial , x) ) = 0, \quad
\varphi_\lambda  (xf(\partial , x), x g(\partial , x) ) = 0, \\
\varphi_\lambda  (yf(\partial , y), y g(\partial , y) ) = 0.
\end{gathered}
\]

It is straightforward to check that $\varphi $ is a 2-cocycle. Indeed, one may either check 
the relation \eqref{eq:Cocycle-lambda}, or simply note that the set $E$ of all matrices of the form
\begin{equation}\label{eq:Cend+Cend(repr)}
\begin{pmatrix}
 xf(\partial , x) &\quad & \frac{1}{2}xf(\partial , x) + \frac{1}{2}(x-\partial ) g(\partial , x) + x(x-\partial )h(\partial , x) \\
 0 & \quad & (x-\partial ) g(\partial , x),
\end{pmatrix}
\end{equation}
$f,g,h \in \Bbbk [\partial , x]$,
is a conformal subalgebra of $\Cend_2$ isomorphic to the extension of $C$ by $M$ relative to $\varphi $, 
where the isomorphism is given by
\[
\begin{gathered}
xf(\partial , x) \mapsto  
 \begin{pmatrix} 
  xf(\partial , x) & & \frac{1}{2}xf(\partial , x) \\
  0 & & 0
 \end{pmatrix},
\quad 
y g(\partial , y) \mapsto 
\begin{pmatrix} 
  0 && \frac{1}{2}(x-\partial)g(\partial , x) \\
  0 && (x-\partial)g(\partial , x)
 \end{pmatrix}, \\
z^2h(\partial, z) \mapsto
\begin{pmatrix} 
  0 && x(x-\partial)h(\partial , x) \\
  0 && 0
 \end{pmatrix}.
\end{gathered}
\]

Let us show $\varphi \notin \mathcal B^2(C, M)$. Assume the converse, i.e., there exists 
$\psi \in \mathcal C^1(C,M)$
such that $d_1 \psi = \varphi $. Suppose 
\[
 \psi(x) = z^2f(\partial , z), \quad \psi(y) = z^2 g(\partial , z).
\]
Then 
\[
 z^2= \varphi_\lambda (x,y) = z^2(z+\lambda ) f(-\lambda , z) + z^2(z+\lambda) g(\partial +\lambda , z+\lambda), 
\]
i.e., $z+\lambda $ divides $1$ in $\Bbbk [\partial, z, \lambda ]$. The contradiction just obtained 
proves 
\[
\mathcal H^2(\Cend_{1,x}\oplus \Cend_{1,x}, M)\ne 0. 
\]

The example above clarifies the main idea of the following statement.

\begin{proposition}\label{prop:CendX+CendX}
 Suppose $C\simeq \Cend_{Q,n}\oplus \Cend_{Q',m}$, where
 $Q=\diag(\underset{n}{\underbrace{1,\dots ,1, x}})$, 
 $Q'=\diag(\underset{m}{\underbrace{1,\dots ,1, x}})$.
 Then there exists a conformal $C$-bimodule $M$ such that 
 $\mathcal H^2(C,M)\ne 0$.
\end{proposition}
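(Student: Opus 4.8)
The plan is to generalize Example~\ref{exm:Cend+Cend} directly. First I would realize both direct summands inside a single $\Cend_N$ with $N=n+m$: the algebra $\Cend_{n,Q}$ sits as the upper-left block (the set of matrices $QA(\partial,x)$ with $A\in M_n(\Bbbk[\partial,x])$), and $\Cend_{m,Q'}$ sits as the lower-right block. By Remark~\ref{rem:LeftRightCend} each summand is isomorphic to a right ideal $\operatorname{diag}(1,\dots,1,x)M_{\bullet}(\Bbbk[\partial,x])$ of the appropriate $\Cend$, so I have an explicit generator-and-relation description of $C$ via Corollary~\ref{cor:DefRelCend_nQ} for each block. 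Label the generators of the first block $e_{ij},e_{1n},x_{n1}$ and those of the second block $\bar e_{kl},\bar e_{1m},\bar x_{m1}$.

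Next I would construct the bimodule $M$. Following the example, take a single off-diagonal ``block'' $M\cong \Bbbk^{n}\otimes_{\Bbbk[\partial,x]}(\text{something})\otimes\Bbbk^{m}$ — concretely the space of $n\times m$ matrices over $\Bbbk[\partial,x]$ with a twist by an extra factor that is not a unit. The cleanest choice mirroring the $n=m=1$ case is to let $M$ be generated over $C$ by one element $w$ on which the first summand acts on the left ``like $\Cend_{n,Q}$'' and the second acts on the right ``like $\Cend_{m,Q'}$'', with the opposite-side actions being zero; the extra non-invertible polynomial factor (the analogue of the extra $x+\lambda$ and the $z^2$ prefactor) is what forces non-triviality. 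Then I define the $2$-cochain $\varphi$ by $\varphi_\lambda(a,\bar b)=$ (the ``naive product'' of $a\in$ first block and $\bar b\in$ second block landing in $M$, but without the non-invertible factor), and $\varphi_\lambda=0$ on all other pairs of generators. Verifying $\varphi\in\mathcal Z^2(C,M)$ I would do exactly as in the example: exhibit a conformal subalgebra $E\subset\Cend_N$ — block-upper-triangular matrices whose diagonal blocks are the two summands and whose off-diagonal block is a mix governed by $\varphi$ and $M$ — and check that $E$ realizes the extension $C\dot+M$ defined by $\varphi$. This simultaneously proves $\varphi$ is a cocycle and identifies the extension concretely.

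Finally I would show $\varphi\notin\mathcal B^2(C,M)$. Assume $d_1\psi=\varphi$ for some $\psi\in\mathcal C^1(C,M)$. Evaluate the coboundary identity on a well-chosen pair $(a,\bar b)$ of generators from the two blocks — in the example, $(x,y)$; in general the pair $(e_{1n}\text{-type},\bar x_{m1}\text{-type})$ or simply the distinguished generators whose product is $w$. The left action of $a$ and the right action of $\bar b$ on the relevant component of $M$ each carry the non-invertible factor (the conformal analogue of $z+\lambda$), so the equation $\varphi_\lambda(a,\bar b)=a\oo{\lambda}\psi(\bar b)-\psi(a)\oo{\lambda}\bar b$ would force that non-invertible element of $\Bbbk[\partial,x,\lambda]$ to divide a unit, a contradiction. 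Hence $\mathcal H^2(C,M)\ne 0$.

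The main obstacle I anticipate is bookkeeping, not conceptual: writing down the bimodule structure on $M$ and the subalgebra $E$ so that all the conformal-associativity identities (the analogues of \eqref{eq:Conf_assoc(n)} for $l$ and $r$) hold, and so that $E$ genuinely lies inside $\Cend_N$ with the stated $\lambda$-product. Once the matrix realization \eqref{eq:Cend+Cend(repr)} is correctly generalized to arbitrary $n,m$ — most likely with the off-diagonal block of the form $\alpha\,(\text{first-block data})+\beta\,(\text{second-block data})+(\text{non-invertible factor})\cdot(\text{free data})$ for suitable scalars $\alpha,\beta$ depending on how $x$ appears in $Q,Q'$ — the cocycle check is automatic and the non-coboundary check is the same one-line divisibility argument as above. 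I would also remark that the hypothesis that \emph{both} summands are of the special diagonal type with a genuine $x$ in the last slot is essential: it is exactly what makes the relevant module actions non-unital and the obstruction survive.
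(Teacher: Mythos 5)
Your proposal follows essentially the same route as the paper: the bimodule is $M=M_{n,m}(\Bbbk[\partial,x])$ with the first summand acting on the left through $Q(x+\lambda)$, the second on the right through $Q'(x+\lambda)$, and the opposite-side actions zero; the cocycle is the ``product without the $Q$-factors,'' $\varphi_\lambda(QA_1+Q'B_1,QA_2+Q'B_2)=A_1(-\lambda,x)B_2^{\perp}(\partial+\lambda,x+\lambda)$; and non-triviality is exactly your divisibility-by-$(x+\lambda)$ contradiction, which the paper runs on the pair $(Q,Q'e_{nm})$. The only cosmetic difference is that the paper checks the cocycle identity by direct computation and records the block-upper-triangular realization inside $\Cend_{n+m}$ only in a subsequent remark (to show the extension has a FFR), whereas you propose to use that realization as the cocycle verification itself.
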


\begin{proof}
Assume $n\le m$. Consider 
the $H$-module $M=M_{n,m}(\Bbbk[\partial, x])$
equipped with the following $C$-bimodule structure:
\[
 (Q(x)A(\partial , x) + Q'(x)B(\partial , x))\oo{\lambda } X(\partial , x) = A(-\lambda,x)Q(x+\lambda)X(\partial +\lambda , x+\lambda ),
\]
\[
 X(\partial , x)\oo{\lambda }(Q(x)A(\partial , x) + Q'(x)B(\partial , x)) = X(-\lambda , x)Q'(x+\lambda ) B(\partial+\lambda, x+\lambda ),
\]
$X\in M$. Straightforward computation shows that this is indeed a conformal bimodule over~$C$.

Let us define linear maps 
\[
 \vdash : M_{n}(\Bbbk[\partial,x])\to M_{n,m}(\Bbbk [\partial,x]), \quad \perp: M_m(\Bbbk[\partial,x])\to M_{n,m}(\Bbbk [\partial,x])
\]
as 
\[
 A^\vdash = \begin{pmatrix}
             A & 0
            \end{pmatrix}, 
\quad A\in M_n(\Bbbk[\partial,x])
\]
(add $m-n$ zero columns) and
\[
B^\perp = \begin{pmatrix}
           b_{11} & \dots & b_{1m} \\
           \hdotsfor3 \\
           b_{n1} & \dots & b_{nm}
          \end{pmatrix}, 
\quad B\in M_m(\Bbbk[\partial,x])
\]
(remove $m-n$ rows in the bottom).
It is clear that 
\[
 A^\vdash B = AB^\perp ,\quad  
 A_1A_2^\vdash =(A_1A_2)^\vdash , \quad 
 B_1^\perp B_2 = (B_1B_2)^\perp .
\] 

Consider the following 2-cochain $\varphi \in \mathcal C^2(C,M)$:
\begin{equation}\label{eq:Q+Q_Cochain}
 \varphi_\lambda (QA_1+Q'B_1, QA_2+Q'B_2) = A_1(-\lambda , x) B_2^\perp (\partial+\lambda, x+\lambda ).
\end{equation}
To prove that \eqref{eq:Q+Q_Cochain} defines a 2-cocycle, one may check \eqref{eq:Cocycle-lambda} 
for $a_i=QA_i+Q'B_i$, $i=1,2,3$. Indeed,
\begin{multline}\nonumber
 a_1\oo{\lambda } \varphi_\mu (a_2,a_3) 
 = (QA_1+Q'B_1)\oo{\lambda } \varphi_\mu (QA_2+Q'B_2, QA_3+Q'B_3) \\
 = QA_1 \oo{\lambda } A_2(-\mu , x)B_3^\perp (\partial+\mu, x+\mu ) \\
 = A_1(-\lambda , x)Q(x+\lambda )A_2(-\mu , x+\lambda )B_3^\perp (\partial+\lambda+\mu, x+\lambda+\mu )
\end{multline}
\begin{multline}\nonumber
 \varphi_{\lambda +\mu}(a_1\oo\lambda a_2, a_3) 
 = \varphi_{\lambda+\mu} (QA_1(-\lambda ,x)Q(x+\lambda )A_2(\partial+\lambda , x+\lambda )  \\
 +  Q'B_1(-\lambda ,x)Q'(x+\lambda )B_2(\partial+\lambda , x+\lambda ),   QA_3+Q'B_3) \\
 = A_1(-\lambda ,x)Q(x+\lambda )A_2(-(\lambda +\mu)+\lambda , x+\lambda )B_3^\perp (\partial+\lambda+\mu, x+\lambda+\mu ),
\end{multline}
\begin{multline}\nonumber
 \varphi_\lambda(a_1,a_2\oo\mu a_3)
 =\varphi_\lambda (QA_1+Q'B_1,  QA_2(-\mu ,x)Q(x+\mu )A_3(\partial+\mu , x+\mu ) \\
 +  Q'B_2(-\mu ,x)Q'(x+\mu )B_3(\partial+\mu , x+\mu ) ) \\
 = A_1(-\lambda, x)B_2^\perp (-\mu ,x+\lambda )Q'(x+\lambda+\mu )B_3(\partial+\lambda+\mu , x+\lambda+ \mu ),
\end{multline}
\begin{multline}\nonumber
 \varphi_\lambda (a_1,a_2)\oo{\lambda+\mu} a_3
 = A_1(-\lambda ,x)B_2^\perp(\partial+\lambda, x+\lambda ) \oo{\lambda+\mu} QA_3+Q'B_3 \\
 = A_1(-\lambda ,x)B_2^\perp(-(\lambda+\mu)+\lambda, x+\lambda )Q'(x+\lambda+\mu )B_3 (\partial+\lambda+\mu , x+\lambda+ \mu),
\end{multline}
so 
\[
 a_1\oo{\lambda } \varphi_\mu (a_2,a_3) = \varphi_{\lambda +\mu}(a_1\oo\lambda a_2, a_3),
 \quad 
 \varphi_\lambda(a_1,a_2\oo\mu a_3) = \varphi_\lambda (a_1,a_2)\oo{\lambda+\mu} a_3.
\]

Assume $\varphi = d_1\psi \in \mathcal B^2(C,M)$ for some $\psi \in \mathcal C^1(C,M)$. 
Suppose 
\[
 \psi(Q) = X_1(\partial, x),\quad \psi(Q'e_{nm})= X_2(\partial, x). 
\]
Then 
\begin{multline}\nonumber
 e_{nm} = I_n e_{nm}^\perp 
 =\varphi_\lambda(Q,Q') 
 = X_1\oo{\lambda }Q' - \psi(Q\oo{\lambda }Q') + Q\oo{\lambda }X_2 \\
 = X_1(-\lambda , x)Q'(x+\lambda ) - 0 + Q(x+\lambda )X_2(\partial+\lambda, x+\lambda),
\end{multline}
but in the right-hand side of the last expression a multiple of $x+\lambda $ occurs
in $n$th row and $m$th column.
Therefore, 
$\mathcal H^2(C,M)\ne 0$.
\end{proof}

\begin{remark}
The non-split extension
\[
 0\to M\to E\to C\to 0
\]
constructed with $C$, $M$, and $\varphi $ from Proposition \ref{prop:CendX+CendX}
has a FFR. 
\end{remark}

Indeed, one  may present $E$ as a subalgebra of  
$\Cend_{n+m}$ that consists of all matrices of the form
\[
\begin{pmatrix}
  Q(x)A & \quad & \frac{1}{2}Q(x)A^\vdash + \frac{1}{2} B^\perp Q(x-\partial ) + Q(x)XQ(x-\partial) \\
  0 & & BQ(x-\partial ) 
\end{pmatrix},
\]
where 
$A\in M_n(\Bbbk[\partial, x])$,
$B\in M_m(\Bbbk[\partial, x])$,
$X\in M_{n,m}(\Bbbk[\partial, x])$.

\begin{proposition}\label{prop:Cend2Q}
 Let $C = \Cend_{n,Q}$, $n>1$, and $Q=\diag(f_1,f_2,\dots , f_n)$, where 
 $f_i\in \Bbbk [x]$ and $f_1\mid f_2 \mid \dots \mid f_n$, $\det Q\ne 0$. 
 If there exist $1\le i<j\le n$ such that $\deg f_j\ge \deg f_i>0$ then 
 $\mathcal H^2(C)=\mathcal H^2(C,C)\ne 0$.
\end{proposition}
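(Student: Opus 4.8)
The plan is to exhibit an explicit conformal bimodule $M$ over $C=\Cend_{n,Q}$ and an explicit $2$-cocycle $\varphi\in\mathcal Z^2(C,M)$ that is not a coboundary, mimicking the construction used in Example~\ref{exm:Cend+Cend} and Proposition~\ref{prop:CendX+CendX}. The natural choice of coefficient module is $M=C$ itself, since we want $\mathcal H^2(C,C)\ne 0$; here $C$ is viewed as the regular conformal bimodule over itself. Fix indices $1\le i<j\le n$ with $\deg f_j\ge\deg f_i>0$, and work with the matrix realization $C=QM_n(\Bbbk[\partial,x])$ inside $\Cend_n$, where $Q=\diag(f_1,\dots,f_n)$. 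The idea is that the divisibility $f_i\mid f_j$ together with $\deg f_j\ge\deg f_i>0$ means one can "move" a factor of $f_i$ between the $i$th and $j$th diagonal slots while still landing inside $C$, and this slack is exactly what produces a nontrivial cocycle.

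First I would write down the cochain. By analogy with~\eqref{eq:Q+Q_Cochain}, for $a_1=QA_1$, $a_2=QA_2$ in $C$ (with $A_1,A_2\in M_n(\Bbbk[\partial,x])$) I would set
\[
 \varphi_\lambda(QA_1,QA_2) = Q(x)\,E\,A_1(-\lambda,x)\,F\,A_2(\partial+\lambda,x+\lambda),
\]
where $E,F$ are suitable constant (or polynomial) matrices built from the matrix units $e_{ii},e_{jj},e_{ij}$ and the polynomials $f_i,f_j$, chosen so that the right-hand side genuinely lies in $C=QM_n(\Bbbk[\partial,x])$ — this is where the hypothesis $f_i\mid f_j$ and $\deg f_i>0$ is used, since we need a factor like $f_j/f_i$ or $f_i$ itself to be an honest polynomial of positive degree. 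Then I would verify that $\varphi$ satisfies the cocycle condition~\eqref{eq:Cocycle-lambda}. As in Proposition~\ref{prop:CendX+CendX}, the cleanest way to do this is \emph{not} to expand~\eqref{eq:Cocycle-lambda} directly but to realize the extension $E=C\dot+M$ as an explicit conformal subalgebra of $\Cend_{2n}$ (block upper-triangular matrices with $C$ on both diagonal blocks and $M$ in the corner, with a $\tfrac12$-type correction term exactly as in~\eqref{eq:Cend+Cend(repr)}); associativity in $\Cend_{2n}$ then gives the cocycle identity for free, and simultaneously proves the accompanying remark that the resulting extension has a FFR.

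Next I would show $\varphi\notin\mathcal B^2(C,M)$. Suppose $\varphi=d_1\psi$ for some $\psi\in\mathcal C^1(C,C)$. Evaluating $d_1\psi$ on a well-chosen pair of generators — the natural candidates are $a_1=Qe_{ii}$ and $a_2=Qe_{jj}$, or $Q$ itself against a matrix unit — one gets an identity of the form $(\text{a fixed nonzero element of }C) = \psi(a_1)\oo{\lambda}a_2 - \psi(a_1\oo{\lambda}a_2) + a_1\oo{\lambda}\psi(a_2)$, where every term on the right carries a visible factor divisible by $(x+\lambda)$ in the relevant matrix entry (because the bimodule action of $C$ multiplies by $Q(x+\lambda)$ on one side and by $Q(x-\partial)$, i.e.\ $Q$ evaluated at the shift, on the other, and $\deg f_i>0$ forces $f_i(x+\lambda)$ to be a nonconstant polynomial in $\lambda$). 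The left-hand side, however, has a unit coefficient in that entry. Comparing coefficients yields that $(x+\lambda)$ — or rather $f_i(x+\lambda)$, which has positive degree in $\lambda$ — divides a nonzero constant in $\Bbbk[\partial,x,\lambda]$, a contradiction. This is the same divisibility obstruction that appeared in Example~\ref{exm:Cend+Cend} and in the proof of Proposition~\ref{prop:CendX+CendX}.

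The main obstacle I anticipate is the bookkeeping in the first two steps: choosing the matrices $E,F$ (equivalently, the precise corner-entry formula in the $\Cend_{2n}$ realization) so that (a) the cochain takes values in $C$ rather than merely in $\Cend_n$, and (b) the block-triangular set is actually closed under the $\lambda$-product of $\Cend_{2n}$. Condition (a) is delicate precisely because $Q$ is a general diagonal matrix, not $\diag(1,\dots,1,x)$: one must use $f_i\mid f_j$ to control which quotients $f_j/f_i$ are polynomials, and the positivity $\deg f_i>0$ is what makes the obstruction argument in the last step go through. Once the correct $E,F$ are identified, verifying closure and non-triviality is a routine computation of the type already carried out in Example~\ref{exm:Cend+Cend}; I would present the $\Cend_{2n}$ matrix model explicitly and let associativity do the work, keeping the direct check of~\eqref{eq:Cocycle-lambda} as a fallback.
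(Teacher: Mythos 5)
Your proposal follows the paper's proof essentially verbatim: the paper takes $M=C$ with the regular bimodule structure, the cocycle $\varphi_\lambda(QA,QB)=Q(x)A(-\lambda,x)\,e_{ij}\,B(\partial+\lambda,x+\lambda)$ (i.e.\ $E=I$ and $F=e_{ij}$ in your notation), and then evaluates a hypothetical coboundary on the pair $(f_ie_{ii},f_je_{jj})$ to reach exactly the divisibility contradiction you describe, namely that $f_i(x+\lambda)$ would have to divide $1$. The one correction worth recording: membership of the cochain in $C$ is automatic from the prefactor $Q(x)$, so the hypotheses $f_i\mid f_j$ and $\deg f_i>0$ enter \emph{only} in the non-coboundary step, and $E$ must be scalar (the paper takes $E=I$) for the cocycle identity to hold, since a noncentral $E$ would sit on different sides of $A_1(-\lambda,x)Q(x+\lambda)$ in the terms $a_1\oo{\lambda}\varphi_\mu(a_2,a_3)$ and $\varphi_{\lambda+\mu}(a_1\oo{\lambda}a_2,a_3)$ and they would no longer cancel.
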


\begin{proof}
Let us choose a pair $i<j$ such that $0<\deg f_i\le \deg f_j$ and consider the 2-cochain given by
\begin{equation}\label{eq:Cend_n,Q-cocycle}
 \varphi_\lambda (Q(x)A, Q(x)B ) = Q(x)A(-\lambda , x)e_{ij} B(\partial+\lambda, x+\lambda ),
\end{equation}
where $e_{ij}$ stands for the corresponding unit matrix.

One may easily check that \eqref{eq:Cocycle-lambda} holds, so $\varphi \in \mathcal Z^2(C)$. 
Let us show 
$\varphi \notin \mathcal B^2(C)$. Indeed, assume there exists $\psi \in \mathcal C^1(C)$ 
such that $\varphi = d_1 \psi$.
Suppose
\[
 \psi(f_ie_{ii}) = Q(x) A(\partial, x), \quad \psi(f_je_{jj}) = Q(x)B(\partial, x)
\]
for some 
$A,B\in M_n(\Bbbk[\partial, x])$.
Let us compute
\[
 \varphi_\lambda (f_ie_{ii}, f_{j}e_{jj}) = Qe_{ii}e_{ij}e_{jj} = f_i e_{ij}
\]
and compare the result with 
\[
 (d_1\psi)_\lambda (f_ie_{ii}, f_{j}e_{jj}) = 
  Q(x)A(\partial, x)_{(\lambda )} f_{j}e_{jj} + f_ie_{ii} {}_{(\lambda )}Q(x)B(\partial, x).
\]
The latter is equal to 
\[
 f_j(x+\lambda )Q(x)A(-\lambda , x) e_{jj} + e_{ii}f_i(x) Q(x+\lambda )B(\partial+\lambda , x+\lambda), 
\]
so in $i$th row and $j$th column we get an equation
\[
 f_i(x) = f_j(x+\lambda )f_i(x)a_{ij}(-\lambda , x) + f_i(x) f_i(x+\lambda )b_{ij}(\partial+\lambda , x+\lambda), 
\]
which implies a contradiction since $f_i\mid f_j$. 
\end{proof}


\begin{remark}
The non-split extension 
$0\to C\to E\to C\to 0$ 
constructed with the cocycle from the proof 
of Proposition \ref{prop:Cend2Q} is a conformal algebra with a FFR. 
\end{remark}

Indeed, it is easy to see that
\[
E  \simeq 
\left\{
 \begin{pmatrix}
  Q(x) A & \quad & e_{ij}A+Q(x)B \\
  0 & & Q(x) A 
 \end{pmatrix}
 \mid 
 A,B \in M_n(\Bbbk[\partial, x])
\right\}\subseteq\Cend_{2n}.
\]

\begin{proposition}\label{prop:Cend_1f}
Let $C=\Cend_{1,f}$, $f=f(x)\in \Bbbk[x]$, $\deg f>1$, $n\ge 1$. 
Then there exists conformal bimodule $M$ over $C$ 
such that $\mathcal H^2(C,M)\ne 0$.
\end{proposition}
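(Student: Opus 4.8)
The plan is to reproduce, for $\Cend_{1,f}$ with $\deg f > 1$, the divisibility obstruction that worked in Proposition~\ref{prop:Cend2Q} and in Example~\ref{exm:Cend+Cend}. Since $\Cend_{1,f}=f(x)\Bbbk[\partial,x]$ is a one-dimensional object over $H$ in the sense that it is cyclic (generated by $f(x)$ as a right ideal of $\Cend_1$), the difficulty compared to the $n\ge 2$ case is exactly the lack of idempotents, so the cohomology-killing argument of Section~3 is unavailable; instead we must exhibit a single non-coboundary cocycle. First I would take as bimodule $M$ the $H$-module $g(x)\Bbbk[\partial,x]$ for a suitable polynomial $g$ (in Example~\ref{exm:Cend+Cend} the choice $z^2$ worked for $f=x$; here one wants $g$ chosen so that $fg$ or $g^2$ carries enough degree), with left and right actions
\[
 f(x)a(\partial,x)\oo{\lambda} g(x)b(\partial,x)
 = g(x)\,g'\text{-twist}\cdot a(-\lambda,x)b(\partial+\lambda,x+\lambda),
\]
following the template of the $\Cend_{1,x}\oplus\Cend_{1,x}$ example but with a single summand acting on both sides; the precise twisting factor (a polynomial in $x+\lambda$) is what makes $M$ a genuine conformal bimodule, and checking the three associativity identities \eqref{eq:Conf_assoc(n)}-style is the routine part.

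Next I would write down the candidate $2$-cochain
\[
 \varphi_\lambda(f(x)a(\partial,x),\,f(x)b(\partial,x)) = (\text{something})\cdot a(-\lambda,x)\,b(\partial+\lambda,x+\lambda)\in M[\lambda],
\]
and verify the cocycle condition \eqref{eq:Cocycle-lambda}. As in the earlier propositions, the cleanest verification is the geometric one: exhibit a conformal subalgebra $E\subseteq\Cend_2$ (block-upper-triangular matrices with $f(x)a$ in the top-left, $f(x-\partial)$-twisted entries on the diagonal, and an $M$-entry in the top-right corner, mirroring \eqref{eq:Cend+Cend(repr)}) together with an explicit $H$-module isomorphism $E\simeq C\dot+ M$ intertwining the $\lambda$-products, which simultaneously proves $\varphi\in\mathcal Z^2(C,M)$ and produces the extension in Theorem~\ref{thm:ExtCocycle}.

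Finally, to show $\varphi\notin\mathcal B^2(C,M)$: suppose $\varphi=d_1\psi$ and set $\psi(f(x)) = g(x)h(\partial,x)\in M$. Evaluating $(d_1\psi)_\lambda(f(x),f(x)) = f(x)h(\partial,x)\oo{\lambda}\text{(twist)} + \text{(twist)}\oo{\lambda} f(x)h(\partial,x)$ against $\varphi_\lambda(f(x),f(x))$ forces an identity in $\Bbbk[\partial,x,\lambda]$ of the shape
\[
 c(x) = f(x+\lambda)\cdot(\text{poly}) + f(x)\cdot(\text{poly}),
\]
where $c(x)$ has degree strictly smaller than $\deg f$ — impossible, since evaluating at $\lambda=-x+\alpha$ for a root $\alpha$ of $f$ (here I use $\deg f>1$ so that $f$ has a root, or more carefully so that the degree count is strict) yields $f(x)\mid c(x)$, a contradiction. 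The main obstacle I expect is pinning down the correct polynomial $g$ and the correct twisting factor so that $M$ is genuinely a bimodule AND the resulting divisibility equation is strictly degree-deficient; for $\deg f = 2$, $f(x)=x^2$ say, one must check that no lower-degree "correction" in $\psi$ can absorb the obstruction, and getting the bookkeeping of degrees right across the $x\mapsto x+\lambda$ shift is the delicate step. Everything else parallels Example~\ref{exm:Cend+Cend} and Proposition~\ref{prop:Cend2Q} closely.
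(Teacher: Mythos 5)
Your setup (find a suitable bimodule, write down a ``product-type'' cocycle, realize the extension geometrically inside $\Cend_2$) matches the paper in spirit, but the final step --- deriving a divisibility contradiction of the shape $c(x)=f(x+\lambda)\cdot(\mathrm{poly})+f(x)\cdot(\mathrm{poly})$ --- has a genuine gap, and it is precisely the step that makes this proposition hard. In Example~\ref{exm:Cend+Cend} and Proposition~\ref{prop:Cend2Q} the coboundary $(d_1\psi)_\lambda(u,v)=u\oo{\lambda}\psi(v)-\psi(u\oo{\lambda}v)+\psi(u)\oo{\lambda}v$ is evaluated on a pair with $u\oo{\lambda}v=0$ (elements of different direct summands, resp.\ $f_ie_{ii}$ and $f_je_{jj}$ with $i\ne j$), so the middle term vanishes and only the two terms carrying the factors $f(x+\lambda)$ and $f(x)$ survive. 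In $\Cend_{1,f}$ no such pair exists: $fa\oo{\lambda}fb=f(x)a(-\lambda,x)f(x+\lambda)b(\partial+\lambda,x+\lambda)$ is nonzero for all nonzero $a,b$, and since $\psi$ is only $\Bbbk[\partial]$-linear (not $\Bbbk[x]$-linear), the term $\psi(fa\oo{\lambda}fb)$ expands via $f(x+\lambda)=\sum_k\lambda^{(k)}f^{(k)}(x)$ into values of $\psi$ at elements such as $f(x)f^{(k)}(x)a\dots$, which are fresh, uncontrolled unknowns of neither of the two forms you need. No divisibility contradiction can be extracted this way. A sanity check confirms the problem: your sketch never uses $\deg f>1$ in an essential way (any nonconstant $f$ has a root over $\Bbbk$), yet $\mathcal H^2(\Cend_{1,x},M)=0$ for every bimodule $M$ by \cite{Kozlov2017}, so any argument of this shape that would equally apply to $f=x$ must break down somewhere.

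The paper's actual proof replaces the divisibility step by a functional-equation analysis. The bimodule is $M=\Cend_1$ with the right regular action and the left action twisted by the isomorphism $\theta$ of Remark~\ref{rem:LeftRightCend}, i.e.\ $fa\bo{\lambda}h=a\oo{\lambda}(fh)$, and the cocycle is $\varphi_\lambda(fa,fb)=a\oo{\lambda}b$ (realized inside $\Cend_2$, much as you anticipated). Assuming $\varphi=d_1\psi$, one sets $\tilde\psi(a)=\psi(fa)$ and rewrites the coboundary identity as $\tilde\psi(a\oo{n}fb)=\tilde\psi(a)\oo{n}fb+a\oo{n}f\tilde\psi(b)-a\oo{n}b$. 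Specializing to $a=b=1$ and $n=N=\deg f$ determines $\tilde\psi(1)$ up to constants $a_k$ with $a_0(x)=-\sum_{k\ge1}a_kx^k$; then an induction on $\deg h$ with $n=N-1$, after normalizing $f=x^N+\alpha x^{N-2}+\cdots$ so that $f^{(N-1)}=N!\,x$ (this is where $N\ge2$ is genuinely used), shows $\tilde\psi(h)=h\tilde\psi(1)$ for all $h$, i.e.\ $\psi\in\mathcal B^1(C,M)$ and hence $d_1\psi=0\ne\varphi$. To complete your proof you would need to replace the divisibility argument with something of this kind that controls the middle term.
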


\begin{proof}
Linear shift of variable allows us to assume $f(x)=x^N + \alpha x^{N-2}+\dots $, i.e., 
there is no $x^{N-1}$, $N=\deg f$.

Consider $M=\Cend_1$ as a conformal bimodule 
over $C$ 
relative to the  following operations $(\cdot\bo\lambda \cdot)$:
\begin{equation}\label{eq:Cend1f-Action}
 fa \bo\lambda h = a \oo{\lambda } fh, \quad h\bo{\lambda } fa = h\oo{\lambda } fa,
\end{equation}
$a,h\in \Cend_1$,
where $(\cdot\oo\lambda \cdot)$ is the standard operation on $\Cend_1$ (see Example~\ref{exmp:Cend}).
This is indeed a bimodule since the right action of $C$ on $M$ coincides with the regular module structure, 
the left one is twisted 
by the isomorphism $\theta $  from Remark~\ref{rem:LeftRightCend}:
$(fa\bo{\lambda }h)=(\theta(fa)\oo{\lambda }h)$.

Define a sesquilinear map $\varphi : C\otimes C\to M[\lambda ]$ given by 
\begin{equation}\label{eq:Cend1f_Cocycle}
 \varphi_\lambda (fa,fb) = a\oo{\lambda }b. 
\end{equation}
It is straightforward to check that $\varphi \in \mathcal Z^2(C,M)$. 
Alternatively, one may note that $M$ is isomorphic to the radical of a conformal algebra $E$ with a FFR,
\[
E = \left\{\begin{pmatrix}
 f(x)a &\quad & f(x)a+f(x) h f(x-\partial) \\
 \noalign{\smallskip }
 0 & & a f(x-\partial)
\end{pmatrix} \mid a,h\in \Bbbk [\partial, x] \right\}
\subseteq \Cend_{2} .
\]
Namely, $h\in M$ corresponds to the matrix $f(x)h f(x-\partial )e_{12}$.

Let us show that $\varphi \notin \mathcal B^2(C,M)$.
Assume the converse: $\varphi = d_1\psi$ for some 
$\psi\in \mathcal C^1(C, M)$. 
Define $\tilde \psi : \Cend_1\to \Cend_1$ by the rule 
$\tilde \psi(a) = \psi(fa)$, $a\in \Cend_1$.
Then 
\[
 \varphi_\lambda (fa,fb) = \tilde\psi(a)\bo{\lambda } fb - \tilde\psi(a\oo{\lambda}fb) + fa\bo{\lambda }\tilde \psi(b)
\]
and 
\eqref{eq:Cend1f-Action}, \eqref{eq:Cend1f_Cocycle} imply
\begin{equation}\label{eq:Cend1f_coboundary}
 \tilde\psi(a\oo{n} fb) = \tilde\psi(a)\oo{n} fb + a\oo{n}f\tilde\psi(b) - a\oo{n} b, \quad n\in \mathbb Z_+.
\end{equation}
In particular, for $a=b=1$ we have 
$ \tilde\psi(1)\oo{N} f + 1\oo{N} f\tilde \psi(1) = \tilde\psi(1\oo{N} f) = N!\tilde\psi(1)$. 
Let us write $\tilde\psi(1)\in \Cend_1$ as
\[
 \tilde\psi(1) = \sum\limits_{k\ge 0} a_k(x) (x-\partial)^k, \quad a_k(x)\in \Bbbk [x].
\]
Then
\begin{multline}\label{eq:A_rel}
 N!  \sum\limits_{k\ge 0} a_k(x) (x-\partial)^k 
  =\sum\limits_{k\ge 0} a_k(x) (x-\partial)^k\oo{N} f(x) + 1\oo{N}  f(x)\sum\limits_{k\ge 0} a_k(x) (x-\partial)^k \\
  =\sum\limits_{k\ge 0} a_k(x) \frac{d^N}{dx^N}(x^kf(x)) + \sum\limits_{k\ge 0} \frac{d^N}{dx^N}(f(x)a_k(x)) (x-\partial)^k.
\end{multline}
Compare coefficients at $(x-\partial)^k$, $k\ge 0$, in the left- and right-hand sides of \eqref{eq:A-rel} to get
\begin{gather}
 N! a_0(x) = \sum\limits_{k\ge 0} a_k(x) \frac{d^N}{dx^N}(x^kf(x)) + \frac{d^N}{dx^N}(a_0(x) f(x)),      \label{eq:A-rel} \\
 N! a_k(x) = \frac{d^N}{dx^N}(f(x)a_k(x)),\quad k\ge 1.                                        \label{eq:B-rel}
\end{gather}
Relation \eqref{eq:B-rel} implies $\deg a_k \le 0$ for $k\ge 1$ (it is enough to compare principal terms of 
the polynomials). Then it follows from \eqref{eq:A-rel} that
\[
 0 = \frac{d^N}{dx^N}\left (\sum\limits_{k\ge 1}  a_k x^kf(x) + a_0(x) f(x) \right),
\]
i.e., 
\[
 a_0(x) = - \sum\limits_{k\ge 1}  a_k x^k.
\]
Hence, 
\[
 \tilde\psi(1) = \sum\limits_{k\ge 1}  a_k((x-\partial)^k- x^k),\quad a_k\in \Bbbk .
\]

It remains to show 
\begin{equation}\label{eq:B1}
\tilde\psi(h)= h\tilde\psi(1) 
\end{equation}
for all  $h\in \Cend_1$. It is enough to consider $h=h(x)\in \Bbbk[x]$
and proceed by induction on $\deg h$. Indeed, if \eqref{eq:B1} holds for some $h\in \Bbbk[x]$
then, by \eqref{eq:Cend1f_coboundary} for $a=h$, $b=1$, $n=N-1$, we have 
\begin{multline}\label{eq:coboundary_comp}
 N!\tilde \psi(xh) \\
 = h(x)\sum\limits_{k\ge 1}  a_k((x-\partial)^k- x^k)\oo{N-1} f(x) 
 + h\oo{N-1} f(x)\sum\limits_{k\ge 1}  a_k((x-\partial)^k- x^k)\\
 = h(x) \sum\limits_{k\ge 1} a_k \bigg(  \frac{d^{N-1}}{dx^{N-1}}(x^kf)- N!x^{k+1} 
 + N! x (x-\partial)^k - \frac{d^{N-1}}{dx^{N-1}}(x^k f) \bigg ) \\
 = N! xh(x)\tilde\psi(1).
\end{multline}
Here we used the initial assumption on the polynomial $f$ to get $f^{(N-1)}=N!x$.

Relation \eqref{eq:B1} means $\psi \in \mathcal B^1(C,M)$, so $d_1\psi =0\ne \varphi$.
\end{proof}

\begin{corollary}\label{cor:Cend_nQ}
Let $C = \Cend_{n,Q}$, $n\ge 1$, $Q=\diag(f_1,f_2,\dots , f_n)$, 
$\det Q\ne 0$. 
 If $\deg f_n>1$ then there exists a bimodule $M$ over $C$ such that 
 $\mathcal H^2(C,M)\ne 0$.
\end{corollary}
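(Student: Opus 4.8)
The plan is to split into three cases according to $n$ and the degrees $\deg f_i$; two of them reduce at once to the propositions above, and the third is handled by a variant of the construction used for Proposition~\ref{prop:Cend_1f}. If $n=1$, then $C=\Cend_{1,f_1}$ with $\deg f_1>1$ and the assertion is exactly Proposition~\ref{prop:Cend_1f}. If $n\ge 2$ and $\deg f_{n-1}>0$, then the pair $i=n-1$, $j=n$ satisfies $\deg f_j\ge\deg f_i>0$ (the inequality $\deg f_{n-1}\le\deg f_n$ holds because $f_{n-1}\mid f_n$), so Proposition~\ref{prop:Cend2Q} already gives $\mathcal H^2(C,C)\ne 0$ and $M=C$ works. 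The only remaining possibility is $n\ge 2$ with $f_1,\dots,f_{n-1}$ of degree~$0$; since $\det Q\ne 0$ these are nonzero scalars, and multiplying $Q$ on the right by a suitable invertible constant diagonal matrix (which does not change the subalgebra $\Cend_{n,Q}\subset\Cend_n$) together with a shift of~$x$ lets us assume $Q=\diag(1,\dots,1,f)$ with $f=f_n$ monic of degree $N=\deg f>1$ and with vanishing coefficient of $x^{N-1}$, exactly as in the proof of Proposition~\ref{prop:Cend_1f}.

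For this remaining case I would mimic Proposition~\ref{prop:Cend_1f}. View $C=QM_n(\Bbbk[\partial,x])$ as a right ideal of $\Cend_n$, and take $M=M_n(\Bbbk[\partial,x])$ as a free $H$-module, with the regular right action of $C$ and with the left action twisted through the isomorphism $\theta$ of Remark~\ref{rem:LeftRightCend}, so that $a\oo{\lambda}m:=\theta(a)\oo{\lambda}m$ for $a\in C$ and $m\in M$, the right-hand side being the product in $\Cend_n$. Since $\theta$ is an isomorphism of $C$ onto a subalgebra of $\Cend_n$, the three bimodule associativity identities follow from associativity in $\Cend_n$. Put $\varphi_\lambda(QA_1,QA_2)=A_1\oo{\lambda}A_2$, the $\Cend_n$-product, which lies in $M$; substituting $a_i=QA_i$ into \eqref{eq:Cocycle-lambda} one checks that the first two terms cancel and the last two cancel, so $\varphi\in\mathcal Z^2(C,M)$.

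To show $\varphi\notin\mathcal B^2(C,M)$, suppose $\varphi=d_1\psi$ with $\psi\in\mathcal C^1(C,M)$. The set $D=\{\,f(x)a(\partial,x)e_{nn}:a\in\Bbbk[\partial,x]\,\}$ is a conformal subalgebra of $C$ isomorphic to $\Cend_{1,f}$. Restricting the coboundary identity $\varphi_\lambda(a,b)=a\oo{\lambda}\psi(b)-\psi(a\oo{\lambda}b)+\psi(a)\oo{\lambda}b$ to $a,b\in D$ and taking the $(n,n)$-entry of both sides, all off-diagonal components of $\psi$ drop out — the idempotent $e_{nn}$ sits on both sides of $\varphi$ and of the twisted left and regular right actions on such arguments — and with $\tilde\psi(a):=\psi(f\,a\,e_{nn})_{nn}$, $a\in\Bbbk[\partial,x]$, one recovers precisely the relation \eqref{eq:Cend1f_coboundary}, i.e.\ $\tilde\psi$ would trivialize the $\Cend_{1,f}$-cocycle \eqref{eq:Cend1f_Cocycle} with coefficients in the bimodule \eqref{eq:Cend1f-Action}. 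By the proof of Proposition~\ref{prop:Cend_1f} this is impossible once $\deg f>1$ and $f$ is normalized, and $\mathcal H^2(C,M)\ne 0$ follows. The step that requires care — and is the main obstacle — is this passage to the $(n,n)$-entry: one must verify that for $a,b\in D$ the $(n,n)$-component of each of $a\oo{\lambda}\psi(b)$, $\psi(a\oo{\lambda}b)$, and $\psi(a)\oo{\lambda}b$ depends only on $\psi(\cdot)_{nn}$ evaluated on $D$, so that the matrix coboundary equation collapses exactly onto the scalar equation \eqref{eq:Cend1f_coboundary}; this is where the bookkeeping of $\theta$ and of the bimodule actions all has to fit together, after which the remainder is routine or a direct appeal to Proposition~\ref{prop:Cend_1f}.
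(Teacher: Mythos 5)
Your proposal is correct and, in its essential part, coincides with the paper's argument: the bimodule $M=M_n(\Bbbk[\partial,x])$ with regular right action and left action twisted by $\theta$, the cocycle $\varphi_\lambda(QA_1,QA_2)=A_1\oo{\lambda}A_2$, and the reduction to the corner $e_{nn}Ce_{nn}\simeq\Cend_{1,f_n}$ are exactly what the paper uses (phrased there as: if the extension $E\subseteq\Cend_{2n}$ split, so would its Peirce corner $(e_{nn}+e_{2n\,2n})E(e_{nn}+e_{2n\,2n})$, contradicting Proposition~\ref{prop:Cend_1f}). Your preliminary case split is harmless but unnecessary: the corner reduction in your third case only uses $e_{nn}Q=Qe_{nn}=f_ne_{nn}$, so it works verbatim for an arbitrary diagonal $Q$ with $\deg f_n>1$.
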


\begin{proof}
A non-split extension 
$0\to M\to E\to C \to 0$ 
may be constructed as 
\[
 E = \left\{
\begin{pmatrix}
 Q(x)A & \quad & Q(x)A+Q(x) X Q(x-\partial) \\
 \noalign{\smallskip }
 0 & & A Q(x-\partial)
\end{pmatrix} \mid A,X\in \Cend_n
\right\}\subseteq \Cend_{2n}.
\]
Indeed, $C$ contains a subalgebra $C'=e_{nn}Ce_{nn}$ isomorphic to $\Cend_{1,f_n}$, 
$M$ contains a $C'$-submodule $M'=e_{nn}M e_{nn}$.
The latter is isomorphic to 
$\Cend_1$ considered as a $\Cend_{1,f_n}$-bimodule relative to the operations
\eqref{eq:Cend1f-Action} for $f=f_n$, and the induced cocycle coincides with \eqref{eq:Cend1f_Cocycle}. 
If $E$ was a split extension then so is $E'=(e_{nn}+e_{2n\,2n})E(e_{nn}+e_{2n\,2n})$,
but $E'$ does not split by Proposition~\ref{prop:Cend_1f}.
\end{proof}

\section{Conclusion}

Let us analyze the general case in order to choose 
which semisimple associative conformal algebras with a FFR 
have trivial second Hochschild cohomology group 
relative to every conformal bimodule.

Let $C$ be a conformal algebra. 
Recall that a conformal identity (or unit) is an element $e\in C$
such that $e\oo\lambda e = e$ and $e\oo{0} a = a$ 
for all $a\in C$ \cite{Retakh2001}. 
Unital associative conformal algebras  and pseudo-algebras
are well studied, they are closely related with 
differential associative algebras \cite{Retakh2004}. 
From the cohomological point of view they also have nice properties.

\begin{proposition}\label{prop:SumWithUnit}
Suppose 
\[
  0\to M\to E \to C \to 0
\]
is an exact sequence of associative conformal algebras, $M$ is nilpotent, and 
$C= C_0\oplus C_1$, 
where 
$\mathcal H^2(C_1,M)=0$, 
$\mathcal H^2(C_0,M)=0$,
and 
$C_1$ is a unital conformal algebra. 
Then $\mathcal H^2(C,M)=0$ and, therefore, 
$E\simeq C\ltimes M$.
\end{proposition}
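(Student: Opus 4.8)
The plan is to show directly that every $2$-cocycle $\varphi \in \mathcal Z^2(C,M)$ is a coboundary, by using the unit $e_1 \in C_1$ to straighten $\varphi$ in successive steps, each time modifying $\varphi$ by a coboundary $d_1\tau$. Write $e_0$ for the (possibly absent) unit-like projector; more precisely, set $e = e_1$ and use the decomposition $C = C_0 \oplus C_1$ together with the relations $e_1 \oo{0} a_1 = a_1$, $e_1 \oo{\lambda} e_1 = e_1$, and $e_1 \oo{\lambda} a_0 = a_0 \oo{\lambda} e_1 = 0$ for $a_0 \in C_0$, $a_1 \in C_1$ (the last because $C_0$ and $C_1$ are ideals in the direct sum). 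First I would restrict $\varphi$ to $C_0$ and to $C_1$ separately: since $\mathcal H^2(C_0,M) = 0$ and $\mathcal H^2(C_1,M) = 0$, after subtracting a coboundary (extend the relevant $1$-cochains by zero on the complementary summand, exactly as in the proof of Lemma~\ref{lem:L0}) we may assume $\varphi_\lambda(C_0,C_0) = 0$ and $\varphi_\lambda(C_1,C_1) = 0$. It remains to kill the ``mixed'' components $\varphi_\lambda(C_0,C_1)$ and $\varphi_\lambda(C_1,C_0)$.

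The key device for the mixed part is the unit $e = e_1$. For $a_1 \in C_1$ and $a_0 \in C_0$ use the cocycle identity \eqref{eq:Cocycle-lambda} on the triples $(e, a_1, a_0)$ and $(a_0, a_1, e)$ and the analogous identities with $\{\cdot\}$-products as in the displays following \eqref{eq:RightMul_relation}. Since $e \oo{\lambda} e = e$ and $e \oo{0} a_1 = a_1$ while $a_0 \oo{\lambda} e = e \oo{\lambda} a_0 = 0$, these identities express $\varphi_\lambda(a_1, a_0)$ and $\varphi_\lambda(a_0, a_1)$ in terms of values of $\varphi$ on pairs each containing $e$, i.e.\ in terms of the single $H$-linear map $u \mapsto \varphi_0\{e, u\}$ (and $u \mapsto \varphi_0\{u, e\}$), up to module actions of $C$. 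This is precisely the pattern of Lemmas~\ref{lem:L1} and \ref{lem:L2}: define $\tau \in \mathcal C^1(C,M)$ by $\tau(a) = \varphi_0\{e, a\} - \{e \oo{0} \varphi_0\{a, e\}\}$ on a suitable set of $H$-linear generators of $C$ lying in $C_0$-mixed-with-$C_1$ position, with $\tau(C_0) = \tau(C_1) = 0$ so that the already-achieved vanishing on $C_0$ and $C_1$ is preserved, and check that $\varphi - d_1\tau$ annihilates all mixed pairs. The computation is the same kind of telescoping cancellation as in the proofs of Lemmas~\ref{lem:L1}--\ref{lem:L2}, driven by $e \oo{0} a_1 = a_1$.

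Alternatively — and this is cleaner to write — I would argue via the extension directly, as in the proof of Theorem~\ref{thm:H2Cend_nQ}. Form $E = C \dot+ M$ from $\varphi$. By the two restriction steps above we already have compatible splittings $\rho_0 : C_0 \to E$ and $\rho_1 : C_1 \to E$ of the respective subalgebras (indeed $\mathcal H^2(C_i,M)=0$ gives $E|_{C_i}$ split). The unit $e \in C_1$ lifts to an idempotent $\tilde e = \rho_1(e) \in E$ with $\tilde e \oo{\lambda} \tilde e = \tilde e$; conjugating/absorbing by $\tilde e$ decomposes $E$ accordingly, and the mixed blocks $\tilde e \oo{\lambda} \rho_0(C_0)$, $\rho_0(C_0) \oo{\lambda} \tilde e$ are forced to vanish by the relations defining the direct sum $C = C_0 \oplus C_1$ pulled back through the surjection $E \to C$ (any mixed product lies in $M$, and one uses $\tilde e \oo{0}(-)$ on it). Hence $\rho_0$ and $\rho_1$ assemble to a homomorphism $\rho = \rho_0 \oplus \rho_1 : C \to E$ splitting the sequence, so $\varphi \in \mathcal B^2(C,M)$ and $\mathcal H^2(C,M) = 0$; the final claim $E \simeq C \ltimes M$ is then the Corollary to Theorem~\ref{thm:ExtCocycle}.

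The main obstacle is the step of showing the mixed blocks vanish: one must verify that $\varphi_\lambda(a_0, a_1)$ and $\varphi_\lambda(a_1, a_0)$ are genuinely eliminated by the single coboundary correction, i.e.\ that the $\tau$ built from $e$-slices is simultaneously compatible with the $C_0$- and $C_1$-normalizations and with \emph{both} one-sided module actions. Concretely, the delicate point is that $C_0$ need not contain an idempotent acting as identity on itself (this is exactly why the $n=1$ case of $\Cend_{n,Q}$ was excluded in Theorem~\ref{thm:H2Cend_nQ}), so one cannot symmetrically straighten using a $C_0$-unit; the argument must lean entirely on the unit of $C_1$ and on the fact that $e_1$ annihilates $C_0$ from both sides. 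Checking that this asymmetry still suffices — that $e \oo{0}(-) = \id$ on $C_1$ propagates through the cocycle identity to pin down the mixed values — is the crux, and it is where I would spend the care.
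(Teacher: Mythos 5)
Your second (``cleaner'') argument is essentially the paper's --- lift the unit of $C_1$ to a conformal idempotent in $E$ and exploit a conformal Pierce decomposition --- but it has a genuine gap at exactly the point you yourself flag as the crux. Having chosen an arbitrary splitting $\rho_0:C_0\to E$ (which exists since $\mathcal H^2(C_0,M)=0$), you assert that the mixed products $\tilde e\oo{\lambda}\rho_0(a_0)$ and $\rho_0(a_0)\oo{\lambda}\tilde e$ ``are forced to vanish.'' They are not: they lie in $M$ (their images in $C$ are $\bar e\oo{\lambda}a_0=0$), but an element of $M$ need not be zero, and applying $\tilde e\oo{0}(-)$ to it only yields another element of $M$ via the module action --- nothing in the direct-sum relations of $C$ pins these values to $0$ for an arbitrary choice of $\rho_0$. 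The same unproved cancellation is what your first (cocycle-straightening) sketch defers to ``the computation,'' so neither version of the argument closes.

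The paper's proof repairs this by reversing the order of operations. It first forms the Pierce corner
\[
E_0=\{\,a-e\oo{0}a-\{a\oo{0}e\}+e\oo{0}\{a\oo{0}e\}\mid a\in E\,\},
\]
which by construction satisfies $e\oo{m}E_0=E_0\oo{m}e=0$ for all $m\in\mathbb Z_+$, observes that $E_0$ surjects onto $C_0$ with nilpotent kernel $E_0\cap M$, and only then invokes the vanishing of $\mathcal H^2(C_0,\cdot)$ to split $C_0$ \emph{inside} $E_0$; likewise $C_1$ is split inside $E_1=\{e\oo{0}\{a\oo{0}e\}\mid a\in E\}$. Because the two splittings land in the orthogonal corners, $E_0\oo{\lambda}E_1=E_1\oo{\lambda}E_0=0$ is automatic and the two copies assemble into a subalgebra of $E$ isomorphic to $C_0\oplus C_1$. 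So your idea is the right one, but you must project into the corner \emph{before} splitting, not after; as written, the assembled map $\rho_0\oplus\rho_1$ need not be a homomorphism.
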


\begin{proof}
If $\bar e$ is a conformal identity of $C_1$ then there exists its pre-image $e\in E$ 
which is a conformal idempotent: $e\oo{0} e =e$, $e\oo{n} e = 0$ for $n>0$ \cite{Zelm2003,Dolg2007}.
Hence, we may apply conformal Pierce decomposition procedure to~$C$. In particular, 
the subalgebra $E_{0} = \{a - e\oo{0} a -\{a\oo{0} e\} + e\oo{0}\{a\oo{0} e\} \mid a\in E\}$
contains a pre-image of $C_0\subset C$. Therefore, $E_{0}$ contains a subalgebra 
isomorphic to $C_0$. On the other hand, the subalgebra 
$E_{1} = \{e\oo{0} \{a\oo{0} e\} \mid a\in E \}$ contains a subalgebra isomorphic to $C_1$. 
Finally, $E_{0}\oo{\lambda } E_{1} = E_{1}\oo{\lambda } E_{0}=0$ since 
$e\oo{m} E_0 = E_0\oo{m} e = 0$ for all $m\ge \mathbb Z_+$, so $E$ contains a subalgebra 
isomorphic to~$C$. 
\end{proof}

\begin{corollary}\label{cor:SplitRadical_Unit}
Let $C$ be an associative conformal algebra with a FFR,
and let $R$ be the maximal nilpotent ideal $R$, $C/R = \bigoplus_{i} C_i$, 
$C_i$ are simple conformal algebras.  
Suppose all $C_i$ are isomorphic to 
$\Cur_n$, $\Cend_n$, and no more than one of them is of the form 
$\Cend_{Q,n}$, $Q=\diag(1,\dots, 1,x)$.
Then $C\simeq C/R\ltimes R$.
\end{corollary}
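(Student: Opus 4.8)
The plan is to combine the main technical result of this paper (Theorem~\ref{thm:H2Cend_nQ}) with the two classical cohomology-vanishing results recalled earlier (Theorems~\ref{thm:H2Cur} and~\ref{thm:H2Cend}) and the unital-summand reduction of Proposition~\ref{prop:SumWithUnit}. Recall that $C/R=\bigoplus_i C_i$ where each $C_i$ is one of $\Cur_n$, $\Cend_n$, or (at most once) $\Cend_{n,Q}$ with $Q=\diag(1,\dots,1,x)$. By Corollary~\ref{cor:DefRelCend_nQ} (for the $\Cend_{n,Q}$ summand) and the standard presentations of $\Cur_n$ and $\Cend_n$, every one of these simple algebras carries a conformal identity: for $\Cur M_n(\Bbbk)$ and $\Cend_n$ the unit is the identity matrix $1\otimes I_n$, and for $\Cend_{n,Q}$ with this particular $Q$ the element $e_{11}+\dots+e_{n-1\,n-1}+x_{nn}$ plays that role (equivalently, the image of $Q(x)$ itself acts as a unit on the right ideal $QM_n(\Bbbk[\partial,x])$ up to the $\theta$-twist of Remark~\ref{rem:LeftRightCend}; in any case a conformal identity exists). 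So each $C_i$ is a unital conformal algebra.

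Next I would verify that $\mathcal H^2(C_i,M)=0$ for every conformal bimodule $M$ over $C_i$, for every summand type: this is Theorem~\ref{thm:H2Cur} when $C_i\simeq\Cur_n$, Theorem~\ref{thm:H2Cend} when $C_i\simeq\Cend_n$, and Theorem~\ref{thm:H2Cend_nQ} when $C_i\simeq\Cend_{n,Q}$, $Q=\diag(1,\dots,1,x)$ (valid for all $n\ge 1$ by the remark following that theorem, which cites \cite{Kozlov2017} for $n=1$). One subtlety: these theorems are stated for bimodules over $C_i$ itself, whereas here $M$ is a bimodule over the whole $C/R$ (or over $E$); but any such $M$ restricts to a bimodule over each summand $C_i$, and restriction of a cocycle is a cocycle, so the vanishing statements apply verbatim to the restrictions. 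This is exactly the hypothesis format demanded by Proposition~\ref{prop:SumWithUnit}.

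Now I would run an induction on the number of summands, peeling off one $C_i$ at a time. Write $C/R=C_0\oplus C_1$ where $C_1$ is any single simple summand and $C_0$ is the direct sum of the rest. Then $C_1$ is unital, $\mathcal H^2(C_1,M)=0$ by the previous paragraph, and $\mathcal H^2(C_0,M)=0$ by the inductive hypothesis; the base case (one summand) is immediate. Proposition~\ref{prop:SumWithUnit} then gives $\mathcal H^2(C/R,M)=0$ for every conformal bimodule $M$, and in particular for the bimodule structure that $C/R$ inherits on $R$ via the extension $0\to R\to E\to C/R\to 0$ (here $R$ is nilpotent, as required). By Theorem~\ref{thm:ExtCocycle} the extension class of $E$ lies in $\mathcal H^2(C/R,R)=0$, so the extension splits: $E\simeq C/R\ltimes R$, i.e., $C\simeq C/R\ltimes R$.

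The only genuinely delicate point is checking that each simple summand really does carry a conformal identity — in particular for the $\Cend_{n,Q}$ summand, where one must confirm that the right-ideal structure of $QM_n(\Bbbk[\partial,x])\subset\Cend_n$ (or its $\theta$-twisted left-ideal incarnation) admits a two-sided conformal unit in the sense of $e\oo\lambda e=e$, $e\oo 0 a=a$; everything else is a bookkeeping assembly of results already in hand. I expect that verification, together with the routine observation that cocycles restrict, to be the main (and quite mild) obstacle.
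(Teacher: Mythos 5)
Your overall architecture (restrict bimodules and cocycles to each summand, quote Theorems~\ref{thm:H2Cur}, \ref{thm:H2Cend}, \ref{thm:H2Cend_nQ}, peel off summands via Proposition~\ref{prop:SumWithUnit}, finish with Theorem~\ref{thm:ExtCocycle}) is the right one and matches what the paper intends. But there is a genuine error exactly at the point you flagged as delicate: $\Cend_{n,Q}$ with $Q=\diag(1,\dots,1,x)$ is \emph{not} a unital conformal algebra. Your candidate $e=e_{11}+\dots+e_{n-1\,n-1}+x_{nn}$ is the matrix $Q(x)$ itself, and $Q(x)\oo{\lambda}Q(x)=Q(x)Q(x+\lambda)=\diag(1,\dots,1,x^2+\lambda x)\ne Q(x)$, so it fails $e\oo{\lambda}e=e$; likewise $Q(x)\oo{0}\bigl(Q(x)A\bigr)=Q(x)^2A\ne Q(x)A$, so it fails $e\oo{0}a=a$. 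No other element works either: for any $e=Q(x)E(\partial,x)$ the $(n,n)$ entry of $e\oo{0}(xe_{nn})$ equals $x^2E_{nn}(0,x)$, which can never be $x$. The tell-tale sign is that your induction never uses the hypothesis that at most one summand is of type $\Cend_{n,Q}$; if every summand were unital, the same argument would give $\mathcal H^2(C,M)=0$ for a direct sum of two such summands, contradicting Example~\ref{exm:Cend+Cend} and Proposition~\ref{prop:CendX+CendX}.

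The repair is small and is precisely where the ``at most one'' hypothesis enters: in the inductive step always take $C_1$ to be one of the \emph{unital} summands ($\Cur_n$ or $\Cend_n$, whose conformal identity is the identity matrix), keeping the single possible $\Cend_{n,Q}$ summand inside $C_0$ all the way down to the base case, where its cohomology vanishes by Theorem~\ref{thm:H2Cend_nQ} (and by \cite{Kozlov2017} for $n=1$). Proposition~\ref{prop:SumWithUnit} only requires $C_1$ to be unital and imposes no such condition on $C_0$, so this ordering of the induction is legitimate, and the rest of your write-up (restriction of cocycles to summands, the final appeal to Theorem~\ref{thm:ExtCocycle}) goes through unchanged.
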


Let us summarize Theorem~\ref{thm:H2Cend_nQ},
Corollary~\ref{cor:SplitRadical_Unit},
Example~\ref{exm:Cend+Cend},
Propositions~\ref{prop:CendX+CendX}, \ref{prop:Cend2Q}, \ref{prop:Cend_1f}, \ref{prop:SumWithUnit}
Corollary~\ref{cor:Cend_nQ},
and the results of \cite{Dolg2009,Kozlov2017} to state 
the ultimate description of those semisimple associative conformal algebras  with a FFR
that split in every extension with a nilpotent kernel.

By \cite{Kol2006_FFR}, every semisimple conformal algebra $C$ with a FFR 
is a direct sum of simple ones,
$C = \bigoplus_{i} C_i$, 
where either $C_i\simeq \Cur_n$ or $C_i\simeq \Cend_{n,Q}$, 
$Q=\diag(f_1,\dots , f_n)$, $\det Q\ne 0$, $f_1\mid \dots \mid f_n$.

\begin{theorem}\label{thm:Wedderburn}
An associative conformal algebra $C$ with  a FFR
splits in every extension with a nilpotent kernel
if and only if $C$ is a direct sum of conformal algebras $C_i$ isomorphic to 
$\Cur_n$, $\Cend_n$, and no more than one $\Cend_{n,Q}$, $Q=\diag(1,\dots, 1,x)$.
\end{theorem}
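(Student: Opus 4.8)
The plan is to assemble the final classification from the partial results already established, treating the "if" and "only if" directions separately. For the \textbf{if direction}, suppose $C = \bigoplus_i C_i$ where each $C_i$ is one of $\Cur M_n(\Bbbk)$, $\Cend_n$, or at most one copy of $\Cend_{n,Q}$ with $Q=\diag(1,\dots,1,x)$. Given any extension $0\to R\to E\to C\to 0$ with $R$ nilpotent, I want to show $E\simeq C\ltimes R$. First I would group the summands: let $C_1$ be the direct sum of all the unital pieces (the $\Cur M_n(\Bbbk)$, which have conformal identity $\sum e_{ii}\otimes 1$, and the $\Cend_n$, which contains the identity of $M_n(\Bbbk[\partial,x])$ as a conformal unit), and let $C_0$ be the remaining at-most-one $\Cend_{n,Q}$ summand. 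By Theorems~\ref{thm:H2Cur}, \ref{thm:H2Cend}, and \ref{thm:H2Cend_nQ} (together with the $n=1$ case from \cite{Kozlov2017}), and using the additivity of $\mathcal H^2$ over direct sums of algebras acting on a fixed bimodule, we get $\mathcal H^2(C_1,M)=0$ and $\mathcal H^2(C_0,M)=0$ for every conformal bimodule $M$, in particular for $M=R$ viewed as a $C$-bimodule via the quotient map. Since $C_1$ is unital, Proposition~\ref{prop:SumWithUnit} applies directly and yields $\mathcal H^2(C,R)=0$, hence $E\simeq C\ltimes R$ by the Corollary to Theorem~\ref{thm:ExtCocycle}.

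For the \textbf{only if direction}, I argue by contraposition: if $C=\bigoplus_i C_i$ is \emph{not} of the stated form, I must produce a conformal bimodule $M$ and a non-split null extension. There are exactly two ways the hypothesis can fail. \emph{Case 1:} some summand $C_i\simeq \Cend_{n,Q}$ has $Q=\diag(f_1,\dots,f_n)$ with $\deg f_n>1$, or with two indices $i<j$ satisfying $\deg f_j\ge\deg f_i>0$. Then Corollary~\ref{cor:Cend_nQ} (the case $\deg f_n>1$) or Proposition~\ref{prop:Cend2Q} (the case of two indices of positive degree) already hands us a bimodule over $C_i$ with nontrivial $\mathcal H^2$; extending this bimodule to all of $C$ by letting the other summands act as zero, and extending the cocycle likewise, keeps it a cocycle and keeps it non-coboundary (a coboundary on $C$ would restrict to one on $C_i$), so $\mathcal H^2(C,M)\ne 0$. \emph{Case 2:} every nontrivial $\Cend$-type summand has $Q$ with at most one $f_k$ of positive degree and that $f_k$ linear, i.e.\ each such summand is $\Cend_{n,Q}$ with $Q=\diag(1,\dots,1,x)$ up to the shift automorphism, but there are \emph{at least two} such summands $C_i,C_j$. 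Then $C$ contains $C_i\oplus C_j\simeq\Cend_{n,Q}\oplus\Cend_{m,Q'}$ with $Q,Q'$ of the form $\diag(1,\dots,1,x)$, and Proposition~\ref{prop:CendX+CendX} supplies a bimodule with $\mathcal H^2(C_i\oplus C_j,M)\ne 0$; again extend by zero on the remaining summands.

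The technical point to be careful about — and the step I expect to require the most attention — is the \emph{restriction argument} used repeatedly in the "only if" direction: given a non-split extension of a summand, one must check that inflating the bimodule and the cocycle along the projection $C\to C_i$ (acting trivially on the complementary summands, which are made to annihilate $M$) genuinely produces a conformal bimodule over $C$ (the associativity identities $l(*,\id)=l(\id,l)$ etc.\ must still hold, which they do because cross terms vanish) and that a $1$-cochain trivializing the inflated cocycle on $C$ would, by restriction to $C_i$, trivialize the original — this uses that $C_i$ is a direct summand \emph{as a conformal algebra}, so $e_i\oo{0}(-)$ projects cochains onto the $C_i$-part compatibly with the differential. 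A secondary subtlety is organizing the "if" direction so that Proposition~\ref{prop:SumWithUnit} is applied with $C_1$ the sum of \emph{all} unital pieces at once; this is legitimate because a finite direct sum of unital conformal algebras is unital (the conformal identity is the sum of the individual ones, using $e_i\oo{0}e_j=0$ for $i\ne j$), and because $\mathcal H^2$ of a direct sum of algebras with coefficients in a common bimodule is the direct sum of the $\mathcal H^2$'s — so $\mathcal H^2(C_1,M)=0$ follows from the vanishing for each unital summand. Once these two points are in place, the theorem is just the conjunction of the cited results, and no further computation is needed.
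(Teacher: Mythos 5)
Your overall architecture matches the paper's: the theorem is assembled from Theorem~\ref{thm:H2Cend_nQ}, Corollary~\ref{cor:SplitRadical_Unit}, Propositions~\ref{prop:CendX+CendX}, \ref{prop:Cend2Q}, \ref{prop:Cend_1f} (via Corollary~\ref{cor:Cend_nQ}), Proposition~\ref{prop:SumWithUnit}, and the results of \cite{Dolg2009,Kozlov2017}. Your ``only if'' direction is correct and in fact more careful than the paper's text: the case analysis on $Q=\diag(f_1,\dots,f_n)$ is exhaustive, and the inflation/restriction argument (extend $M$ and the cocycle by zero on the complementary summands, then restrict a hypothetical trivializing $1$-cochain back to the offending summand) is exactly the right way to pass from $\mathcal H^2(C_i,M)\ne 0$ to $\mathcal H^2(C,M)\ne 0$.

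There is, however, a genuine flaw in the ``if'' direction: you invoke ``additivity of $\mathcal H^2$ over direct sums of algebras acting on a fixed bimodule'' to conclude $\mathcal H^2(C_1,M)=0$. This general principle is \emph{false}, and the paper's own results refute it: by \cite{Kozlov2017} one has $\mathcal H^2(\Cend_{1,x},M')=0$ for every bimodule $M'$, yet Example~\ref{exm:Cend+Cend} and Proposition~\ref{prop:CendX+CendX} exhibit a bimodule $M$ over $\Cend_{1,x}\oplus\Cend_{1,x}$ with $\mathcal H^2\ne 0$. The failure of this additivity is precisely the phenomenon the whole paper is built around, so it cannot be assumed. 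The conclusion you need --- $\mathcal H^2(C_1,M)=0$ when $C_1$ is a finite direct sum of \emph{unital} summands each with vanishing $\mathcal H^2$ --- is true, but it must be proved by induction on the number of summands using Proposition~\ref{prop:SumWithUnit} itself (peel off one unital summand at a time via the lifted conformal idempotent and the Peirce decomposition), which is exactly what the paper's Corollary~\ref{cor:SplitRadical_Unit} encapsulates. Your observation that a finite direct sum of unital conformal algebras is again unital is correct and is what makes this induction run; replace the appeal to additivity by that induction and the ``if'' direction closes. The rest of the argument stands.
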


It is worth mentioning that if $C$ does not satisfy the condition of Theorem~\ref{thm:Wedderburn}
then there exists a non-split extension $E$ in an exact sequence $0\to M\to E\to C\to 0$
which is an conformal algebra with a FFR. Therefore, Theorem~\ref{thm:Wedderburn} may be 
considered as an analogue of the Wedderburn Principal Theorem for the class 
of associative conformal algebras with a FFR.

The list of those simple associative conformal algebras with a FFR that split 
in every null extension seems to be related with irreducible representations 
of finite Lie conformal superalgebras. Particular observations on simple superalgebras
lead to the following 

\medskip
\noindent
{\bf Conjecture.}
Let $L$ be a finite Lie conformal superalgebra, and let $M$ be a finite irreducible 
conformal $L$-module. Then the associative conformal subalgebra of $\Cend M$
generated  by the image of $L$ is isomorphic to either $\Cur_n$, or $\Cend_n$, or 
$\Cend_{n,Q}$ for $Q=\diag(1,\dots, 1, x)$, where $n$ is the rank of 
$M$ over $H=\Bbbk[\partial ]$.

\section*{Acknowledgements}
The work was supported by the Program of fundamental scientific researches of the Siberian Branch of 
Russian Academy of Sciences, I.1.1, project 0314-2016-0001.
The authors are grateful to the referees for useful comments that helped to improve the exposition.

\end{document}